\theoremstyle{plain}
\newtheorem{theorem}{Theorem}
\newtheorem{proposition}[theorem]{Proposition}
\newtheorem{lemma}[theorem]{Lemma}
\newtheorem{corollary}[theorem]{Corollary}
\newtheorem{definition}[theorem]{Definition}
\theoremstyle{definition}
\newtheorem{remark}[theorem]{Remark}
\numberwithin{equation}{section}
\numberwithin{theorem}{section}
\newcommand{\eps}{\varepsilon}
\newcommand{\sign}{\mathrm{sign}\text{ }}
\newcommand{\Var}{{\rm Var}\,}
\newcommand{\ud}[0]{\,\mathrm{d}}
\newcommand{\vertiii}[1]{{\left\vert\kern-0.25ex\left\vert\kern-0.25ex\left\vert #1 
    \right\vert\kern-0.25ex\right\vert\kern-0.25ex\right\vert}}
\begin{document}

\title[On the martingale decompositions in infinite dimensions]
{On the martingale decompositions of Gundy, Meyer, and Yoeurp in infinite dimensions}

\author{Ivan Yaroslavtsev}
\address{Delft Institute of Applied Mathematics\\
Delft University of Technology \\ P.O. Box 5031\\ 2600 GA Delft\\The
Netherlands}
\email{I.S.Yaroslavtsev@tudelft.nl}

\begin{abstract}
We show that the canonical decomposition (comprising both the Meyer-Yoeurp and the Yoeurp decompositions) of a general $X$-valued local martingale is possible if and only if $X$ has the UMD property. More precisely, $X$ is a UMD Banach space if and only if for any $X$-valued local martingale $M$ there exist a continuous local martingale $M^c$, a purely discontinuous quasi-left continuous local martingale $M^q$, and a purely discontinuous local martingale $M^a$ with accessible jumps such that $M = M^c + M^q + M^a$. The corresponding weak $L^1$-estimates are provided. Important tools used in the proof are a~new version of Gundy's decomposition of continuous-time martingales and weak $L^1$-bounds for a certain class of vector-valued continuous-time martingale transforms.
\end{abstract}

\keywords{Gundy's decomposition, continuous-time martingales, UMD spaces, canonical decomposition, Meyer-Yoeurp decomposition, Yoeurp decomposition, weak estimates, weak differential subordination}

\subjclass[2010]{60G44 Secondary: 60G07, 60G57, 60H99, 46N30}

\maketitle

\section{Introduction}

It is well-known thanks to the scalar-valued stochastic integration theory that a~stochastic integral $\int \Phi\ud N$ of a general bounded predictable real-valued process $\Phi$ with respect to a general real-valued local martingale $N$ exists and is well defined (see e.g.\ Chapter 26 in \cite{Kal}). Moreover, $\int \Phi\ud N$ is a local martingale, so by the Burkholder-Davis-Gundy inequalities one can show the corresponding $L^p$-estimates for $p\in (1,\infty)$:
\begin{equation}\label{eq:introBDGscalar}
  \mathbb E \sup_{0\leq s\leq t}\Bigl|\int_0^s \Phi\ud N\Bigr|^p \eqsim_p \mathbb E \Bigl(\int_0^t \Phi^2 \ud [N]\Bigr)^{\frac p2},\;\;\; t\geq 0
\end{equation}
(here $[N]:\mathbb R_+\times \Omega \to \mathbb R_+$ is a quadratic variation of $N$, see \eqref{eq:defquadvar} for the definition).
The inequality \eqref{eq:introBDGscalar} together with a Banach fixed point argument play an important r\^ole in providing solutions to SPDE's with a general martingale noise (see e.g.\ \cite{VY2016,Kal,DPZ,NVW,GK1,G3} and references therein). For this reason \eqref{eq:introBDGscalar}-type inequalities for a broader class of $N$ and $\Phi$ are of interest. In particular, one can consider $H$-valued $N$ and $\mathcal L(H,X)$-valued $\Phi$ for some Hilbert space $H$ and Banach space $X$. Building on ideas of Garling \cite{Gar85} and McConnell \cite{MC}, van Neerven, Veraar, and Weis have shown in \cite{NVW} that for a special choice of $N$ (namely, $N$ being a Brownian motion) and a general process $\Phi$ it is necessary and sufficient that $X$ is in the class of so-called {\em UMD Banach spaces} (see Subsection~\ref{subsec:UMD} for the definition) in order to obtain estimates of the form~\eqref{eq:introBDGscalar} with the right-hand side replaces by a generalized square function. Later in the paper \cite{Ver} by Veraar and in the paper \cite{VY2016} by Veraar and the author, inequalities of the form \eqref{eq:introBDGscalar} have been extended to a general continuous martingale $N$, again given that $X$ has the UMD property.

Extending \eqref{eq:introBDGscalar} to a general martingale $N$ is an open problem, which was solved only for $X = L^q(S)$ with $q\in (1,\infty)$ in the recent work \cite{DY17} by Dirksen and the author. One of the key tools applied therein was the so-called {\em canonical decomposition} of martingales. The canonical decomposition first appeared in the work \cite{Yoe76} by Yoeurp, and partly in the paper \cite{Mey76} by Meyer, and has the following form: an $X$-valued local martingale $ M$ is said to admit the canonical decomposition if there exists a continuous local martingale $ M^c$, a purely discontinuous quasi-left continuous local martingale $ M^q$ (a ``Poisson-like'' martingale which does not jump at predictable stopping times), and a purely discontinuous local martingale $ M^a$ with accessible jumps (a ``discrete-like'' martingale which jumps only at a certain countable set of predictable stopping times) such that $ M^c_0= M^q_0=0$ a.s.\ and $ M=  M^c +  M^q +  M^a$. The canonical decomposition (if it exists) is unique due to the uniqueness in the case $X = \mathbb R$ (see Remark \ref{rem:MY==weakMY} and \ref{rem:Y==weakY}). Moreover, when $X$ is UMD one has by \cite{Y17MartDec} that for all $p\in (1,\infty)$,
\begin{equation}\label{eq:introcandecL^pest}
  \mathbb E \| M_t\|^p \eqsim_{p,X}\mathbb E \| M^c_t\|^p +\mathbb E \| M^q_t\|^p+\mathbb E \| M^a_t\|^p,\; t\geq 0.
\end{equation}
In particular, if $N$ is $H$-valued and $\Phi$ is $\mathcal L(H, X)$-valued, then 
$$
\int \Phi \ud N = \int \Phi \ud N^c+ \int \Phi \ud N^q + \int \Phi \ud N^a
$$ 
is the canonical decomposition given that $N = N^c + N^q + N^a$ is the canonical decomposition, so
\[
 \mathbb E \Bigl\|\int_0^t \Phi \ud N\Bigr\|^p \eqsim_{p,X}\mathbb E \Bigl\|\int_0^t \Phi \ud N^c\Bigr\|^p +\mathbb E \Bigl\|\int_0^t \Phi \ud N^q\Bigr\|^p+\mathbb E \Bigl\|\int_0^t \Phi \ud N^a\Bigr\|^p,\;\;\; t\geq 0,
\]
which together with Doob's maximal inequality reduces the problem of extending \eqref{eq:introBDGscalar} to the separate cases of $N^c$, $N^q$ and $N^a$. Possible approaches of how to work with $\int \Phi \ud N^c$, $\int \Phi \ud N^q$, and $\int \Phi \ud N^a$ have been provided by \cite{DY17}: sharp estimates for the first were already obtained in \cite{Ver,VY2016} and follow from the similar estimates for a Brownian motion from \cite{NVW}; the second can be treated by using random measure theory (see Subsection \ref{subsec:randommeasures}), which is an extension of Poisson random measure integration theory (see \cite{Dirk14} and \cite{DMN}); finally, the latter one can be transformed to a discrete martingale by an approximation argument, so the desired $L^p$-estimates are nothing more but the {\em Burkholder-Rosenthal inequalities} (see \cite{DY17,Bur73,Ros70} for details). 

\smallskip

The canonical decomposition also plays a significant r\^ole in obtaining $L^p$-estimates for {\em weakly differentially subordinated} martingales. The weak differential subordination property as a vector-valued generalization of Burkholder's differential subordination property (see \cite{Burk84,Os12,HNVW1,KW92}) was introduced by the author in \cite{Y17FourUMD}, and can be described in the following way: an $X$-valued local martingale $\widetilde M$ is weakly differentially subordinated to an \mbox{$X$-va}\-lued local martingale $M$ if for each $x^* \in X^*$ and for each $t\geq s\geq 0$ a.s.
\begin{align*}
 |\langle\widetilde  M_0, x^*\rangle| &\leq |\langle M_0, x^*\rangle|,\\
 [\langle\widetilde  M, x^*\rangle]_t - [\langle\widetilde  M, x^*\rangle]_s &\leq [\langle M, x^*\rangle]_t - [\langle M, x^*\rangle]_s.
\end{align*}
If $X$ is a UMD Banach space and $p\in (1,\infty)$, then applying $L^p$-bounds \eqref{eq:introcandecL^pest} for the terms of the canonical decomposition together with $L^p$-bounds for purely discontinuous (see \cite{Y17FourUMD}) and continuous (see \cite{Y17MartDec}) weakly differentially subordinated martingales yields
\begin{equation}\label{eq:introL^pestWDS}
  (\mathbb E \|\widetilde M_{\infty}\|^p)^{\frac 1p} \leq c_{p,X} (\mathbb E \|M_{\infty}\|^p)^{\frac 1p},
\end{equation}
where the best known constant $c_{p,X}$ equals $\beta_{p, X}^2(\beta_{p, X}+1)$ (here $\beta_{p,X}$ is the {\em UMD$_p$ constant of $X$}, see Subsection \ref{subsec:UMD} for the definition). Sharp estimates for $c_{p,X}$ in \eqref{eq:introL^pestWDS} remain unknown. Moreover, it is an open problem whether one can prove weak $L^1$-estimates of the form
\begin{equation}\label{eq:introweakL^1estWDS}
  \lambda \mathbb P\bigl(\widetilde M^*_{\infty}>\lambda\bigr)\lesssim_{p,X} \mathbb E \|M_{\infty}\|,\;\;\; \lambda>0.
\end{equation}
Here this question is partly solved: we show that \eqref{eq:introweakL^1estWDS} holds for $\widetilde M$ being one of the terms of the canonical decomposition of $M$ (see \eqref{eq:introweakL^1forcanonical} and \eqref{eq:L^1inftyforcor:candecoflocmartsufofUMD}).

\smallskip

The discussion above demonstrates that the canonical decomposition is useful for vector-valued stochastic integration and weak differential subordination, so the following natural question arises: {\em for which Banach spaces $X$ does every $X$-valued local martingale have the canonical decomposition?} The paper \cite{Y17MartDec} together with the estimates \eqref{eq:introcandecL^pest} provides the answer for $L^p$-martingales given $p\in(1,\infty)$. Then $X$ being a UMD Banach space guarantees such a decomposition.

The present paper is devoted to providing the definitive answer to this question (see Section \ref{sec:candecoflocalmartUMD}):

\begin{theorem}\label{thm:intromainwithweakL^1est}
 Let $X$ be a Banach space. Then the following are equivalent:
 \begin{enumerate}[(i)]
  \item $X$ is a UMD Banach space;
  \item every local martingale $M:\mathbb R_+ \times \Omega \to X$ admits the canonical decomposition $M = M^c + M^q + M^a$. 
 \end{enumerate}
Moreover, if this is the case, then for all $t\geq 0$ and $\lambda>0$
\begin{equation}\label{eq:introweakL^1forcanonical}
 \begin{split}
  \lambda \mathbb P (( M^c)^*_t >\lambda) &\lesssim_X \mathbb E \| M_t\|,\\
   \lambda \mathbb P (( M^q)^*_t >\lambda) &\lesssim_X \mathbb E \| M_t\|,\\
    \lambda \mathbb P (( M^a)^*_t >\lambda) &\lesssim_X \mathbb E \| M_t\|.
 \end{split}
\end{equation}
\end{theorem}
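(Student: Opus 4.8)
The plan is to prove the two implications separately, with the bulk of the work going into (i)$\Rightarrow$(ii). For the easy direction (ii)$\Rightarrow$(i), I would argue contrapositively: if $X$ is not UMD, one can find $X$-valued discrete martingale difference sequences on which the UMD property fails quantitatively, realize them as (the jumps of) a purely discontinuous local martingale with accessible jumps, and show that the splitting-off of, say, the Meyer--Yoeurp piece (which for a purely discrete martingale amounts to a martingale transform by a $\{0,1\}$-valued predictable multiplier picking out an appropriate sub-sequence of jumps) cannot be performed with values in $X$. Equivalently, one constructs from a UMD-violating pair a martingale whose canonical decomposition, if it existed, would yield the missing $L^p$ (or weak-$L^1$) bound; this is the standard ``necessity'' half and should follow the lines of \cite{Y17MartDec}.

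For (i)$\Rightarrow$(ii), the strategy is a reduction-by-truncation argument combined with the two named tools from the abstract: the new continuous-time Gundy decomposition and the weak-$L^1$ bounds for vector-valued continuous-time martingale transforms. First I would establish the canonical decomposition for $L^1$-bounded (equivalently, for simplicity, $L^\infty$- or $H^1$-type) martingales by hand: the continuous part $M^c$ is extracted as the continuous part of the martingale in the Meyer--Yoeurp sense, and then within the purely discontinuous remainder $M^d = M - M^c$ one separates the quasi-left-continuous part $M^q$ from the accessible-jump part $M^a$ using the Yoeurp decomposition, i.e.\ by projecting the jumps onto the thin set of graphs of predictable stopping times. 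Each of these operations is, at the level of $\langle M,x^*\rangle$, a classical scalar decomposition; the content is that for UMD $X$ the resulting operators are bounded on $L^p(X)$ with constant depending only on $\beta_{p,X}$, which is precisely the martingale-transform characterization of UMD extended to continuous time. This gives \eqref{eq:introcandecL^pest}-type control, but only for $p\in(1,\infty)$.

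Next, to reach a \emph{general} local martingale and the \emph{weak-$L^1$} endpoint \eqref{eq:introweakL^1forcanonical}, I would invoke the new Gundy decomposition: given $M$ and $\lambda>0$, write $M = M' + M'' + M'''$ where $M'$ is $L^2$-bounded with norm controlled by $\lambda^{1/2}(\mathbb E\|M_t\|)^{1/2}$, $M''$ has total variation controlled in $L^1$ by $\mathbb E\|M_t\|$, and $M'''$ is supported (as a process) on a set of probability $\lesssim \lambda^{-1}\mathbb E\|M_t\|$. The canonical decomposition is linear and commutes with each of these pieces (each summand is again a local martingale, adapted to the same filtration, with jumps a subset of those of $M$), so it suffices to bound each of $(M')^c,(M')^q,(M')^a$ in $L^2\to$ weak-$L^1$ via the $p=2$ case just proved plus Chebyshev; to bound the decomposition of the finite-variation piece $M''$ trivially by its variation (finite-variation martingales are purely discontinuous, so only $M^q,M^a$ appear and each has variation dominated by that of $M''$); and to control $M'''$ by the probability of its support. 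Assembling the three contributions with the triangle inequality for weak-$L^1$ (with the usual loss of a constant) yields \eqref{eq:introweakL^1forcanonical}. Finally, a localization/stopping-time argument removes the integrability assumptions and upgrades the result to arbitrary local martingales, and existence of the decomposition for general $M$ follows by approximating $M$ in a suitable sense and using the weak-$L^1$ bounds to pass to the limit.

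The main obstacle I expect is the continuous-time Gundy decomposition itself and verifying that it interacts cleanly with the three ``types'' of martingale behaviour (continuous, quasi-left-continuous, accessible jumps): in discrete time Gundy's decomposition is combinatorial and the stopping is transparent, but in continuous time one must be careful that the ``good'' $L^2$ part, the finite-variation part, and the small-support part are each still local martingales, that their jumps are genuinely sub-jumps of $M$ (so that the type classifications are inherited), and that the truncation does not create a continuous component out of jumps or vice versa. Closely tied to this is proving the weak-$(1,1)$ bound for the relevant continuous-time martingale transforms (the predictable projections used to peel off $M^c$ and to split $M^d$ into $M^q+M^a$) with a constant depending only on the UMD constant of $X$ — this is the vector-valued endpoint estimate that has no classical scalar analogue strong enough to quote directly, and it is where the UMD hypothesis is really consumed.
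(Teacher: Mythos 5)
Your plan for (i)$\Rightarrow$(ii) is essentially the paper's: establish the $L^p$-theory from UMD, prove a continuous-time Gundy decomposition, deduce weak $L^1$-bounds for the decomposition operators viewed as martingale transforms, and pass to general local martingales by approximation in $L^1$ plus the weak bounds (the paper packages the Gundy step as an abstract weak-type bound, Theorem \ref{thm:analougeofProp3.5.4}, for any linear $T$ with $TM\stackrel{w}\ll M$ and $\{M^*_\infty=0\}\subset\{(TM)^*_\infty=0\}$, rather than decomposing each Gundy piece separately, but the content is the same). Two of your ``trivial'' steps do conceal real lemmas: the variation of the decomposition of the finite-variation Gundy piece is \emph{not} simply dominated by the variation of that piece, because the compensator of the jump measure contributes its own variation --- one only gets a factor $2$, via the representation $N=\int x\,\dd\bar\mu^N$ (Lemmas \ref{lem:M=intwrtcompranmeas} and \ref{lem:WDSvariations}); and ``control $M'''$ by the probability of its support'' presupposes that the canonical decomposition of a martingale vanishing identically also vanishes, which the paper has to prove through the quadratic-variation identities of Lemmas \ref{lem:starstaffvsqvofajmart}--\ref{lem:WDS0appearsearlierforN}. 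These are fillable, so I would not call them gaps, but they are where the argument actually lives.

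The genuine gap is in (ii)$\Rightarrow$(i). You propose to ``realize [the UMD-violating martingale difference sequence] as (the jumps of) a purely discontinuous local martingale with accessible jumps'' and then show the decomposition fails. But such a martingale \emph{trivially} admits the canonical decomposition: it is its own accessible-jump part, $M=M^a$ with $M^c=M^q=0$, and no transform by the bad multiplier $(a_n)$ is ever performed. The point of the paper's construction (Theorem \ref{thm:noUMDnodec}) is to interleave two different \emph{types} of increments: the increments with $a_n=0$ are kept as genuine jumps, while the increments with $a_n=1$ are replaced by small continuous martingales whose terminal values approximate the corresponding Rademacher variables with error summable against $\|\phi_n\|_\infty$ (estimate \eqref{eq:upperboundsforsigmanotMiniffUMD}). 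Only then does the Meyer--Yoeurp decomposition, if it existed, coincide up to a convergent error with the divergent transform $\sum_n a_n\,df_n$, forcing $(M^c)^*_\infty=\infty$ a.s.\ and contradicting the c\`adl\`ag property of a local martingale. Without this mixing of continuous and purely discontinuous behaviour (or, for the Yoeurp half, of quasi-left-continuous and accessible behaviour) the decomposition operator never ``sees'' the multiplier $(a_n)$, and no contradiction arises. Your alternative one-line suggestion --- that existence of the decomposition would ``yield the missing $L^p$ or weak-$L^1$ bound'' --- also does not close the gap as stated, since existence of a decomposition for each individual martingale does not by itself produce a uniform bound without an additional closed-graph or uniform-boundedness argument, which you would still need to set up on the correctly constructed counterexample.
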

Notice that the inequalities \eqref{eq:introweakL^1forcanonical} are new even in the real-valued case, even though in that case they are direct consequences of the sharp weak $(1,1)$-estimates for differentially subordinated martingales proven by Burkholder in \cite{Burk87,Burk89} (see also \cite{Os12,Os07a} for details), from which one can show the following estimates
\begin{equation*}
 \begin{split}
  \lambda \mathbb P (( M^c)^*_t >\lambda) &\leq 2 \mathbb E | M_t|,\\
   \lambda \mathbb P (( M^q)^*_t >\lambda) &\leq 2 \mathbb E | M_t|,\\
    \lambda \mathbb P (( M^a)^*_t >\lambda) &\leq 2 \mathbb E | M_t|.
 \end{split}
\end{equation*}

The main instrument for proving ${(ii)}\Rightarrow {(i)}$ in Theorem \ref{thm:intromainwithweakL^1est} is Burkholder's characterization of UMD Banach spaces from \cite{Burk81}: $X$ is a UMD Banach space if and only if there exists a constant $C>0$ such that for any $X$-valued discrete martingale $(f_n)_{n\geq 0}$, for any sequence $(a_n)_{n\geq 0}$ with values in $\{-1,1\}$ one has that
 \[
  g^*_{\infty} >1\;\; \text{a.s.}\;\;\Longrightarrow \;\;\mathbb E\|f_{\infty}\|>C,
 \]
where $(g_n)_{n\geq 0}$ is an $X$-valued discrete martingale such that 
\begin{equation}\label{eq:introBurkmarttrans}
 \begin{split}
   g_n - g_{n-1} &= a_n(f_n-f_{n-1}),\;\;\;n\geq 1,\\
g_0&=a_0f_0,
 \end{split}
\end{equation}
and where $g^*_{\infty} := \sup_{n\geq 0} \|g_n\|$. Using this characterization for a given non-UMD Banach space $X$ we construct a martingale $M:\mathbb R_+ \times \Omega \to X$ which does not have the canonical decomposition (see Subsection \ref{subsec:NecofUMD}).

In order to obtain weak $L^1$-estimates of the form \eqref{eq:introweakL^1forcanonical} together with $(i)\Rightarrow (ii)$ in Theorem \ref{thm:intromainwithweakL^1est} one needs to use two techniques.
The first is the so-called {\em Gundy decomposition} of martingales.
This decomposition was first obtained by Gundy in \cite{Gundy68} for discrete real-valued martingales.
Later in \cite{HNVW1,MT00,BG70,PR06} a more general version of this decomposition for vector-valued discrete martingales was obtained. 
In Section~\ref{sec:Gundy'sdec} we will present a continuous-time analogue of Gundy's decomposition, which has the following form:
an $X$-valued martingale $M$ can be decomposed into a sum of three martingales $M^1$, $M^2$, and $M^3$, depending on $\lambda>0$, such that for each $t\geq 0$
\begin{itemize}
  \item [(i)] $\|M^1_{t}\|_{L^{\infty}(\Omega; X)}\leq 2\lambda$, $\mathbb E\|M_t^1\|\leq 5 \mathbb E\|M_t\|$,
  \item [(ii)] $\lambda \mathbb P ((M^2)^*_t>0)\leq 4\mathbb E\|M_t\|$,
  \item [(iii)] $\mathbb E(\Var M^3)_t\leq 7 \mathbb E\|M_t\|$,
 \end{itemize}
where $\Var M$ is a variation of the path of $M$.

\smallskip

The second important tool is {\em weak differential subordination martingale transforms}. Discrete martingale transforms were pioneered by Burkholder in~\cite{Burk66}, where he considered a transform $(f_n)_{n\geq 0}\mapsto (g_n)_{n\geq 0}$ of a real-valued martingale $(f_n)_{n\geq 0}$ such that
\begin{align*}
 g_n-g_{n-1}&=a_n (f_n-f_{n-1}),\;\;\; n\geq 1,\\
 g_0&=a_0f_0
\end{align*}
for some $\{0,1\}$-valued deterministic sequence $(a_n)_{n\geq 0}$. Later in \cite{Burk81,Hit90,HNVW1,MT00,BG70,GW05} several approaches and generalizations to the vector-valued setting and operator-valued predictable sequence $(a_n)_{n\geq 0}$ have been discovered, while the martingale $(f_n)_{n\geq 0}$ remained discrete. In particular for a very broad class of discrete martingale transforms it was shown that $L^p$-boundedness of the transform implies weak $L^1$-bounds. In Subsection \ref{subsec:WDSoperator} (see Theorem \ref{thm:analougeofProp3.5.4}) we prove the same assertion for a~weak differential subordination martingale transform, i.e.\ for an operator $T$ acting on continuous-time $X$-valued local martingales such that $TM$ is weakly differentially subordinated to $M$ and $\{M^*_{\infty}=0\}\subset \{(TM)^*_{\infty}=0\}$ for any $X$-valued local martingale $M$. A particular example of such a martingale transform $T$ is $M\mapsto TM = M^c$, where $M^c$ is the continuous part of $M$ in the canonical decomposition. Due to \eqref{eq:introcandecL^pest} this operator is bounded as an operator acting on $L^p$-martingales if $X$ is UMD, so by Theorem~\ref{thm:analougeofProp3.5.4} the first inequality of \eqref{eq:introweakL^1forcanonical} follows.
Even though in the case of a discrete filtration such an operator has a classical Burkholder's form \eqref{eq:introBurkmarttrans} from \cite{Burk81} (with $(a_n)_{n\geq 0}$ being predictable instead of deterministic, see Proposition \ref{prop:discretefiltrationopProp3.5.4} and the remark thereafter), such transforms are of interest since they act on continuous-time martingales, which was not considered before.

\smallskip

\emph{Acknowledgment} -- The author would like to thank Mark Veraar for useful suggestions and fruitful discussions. The author thanks Alex Amenta and Jan van Neerven for careful reading of parts of this article and helpful comments.

\section{Preliminaries}

In the sequel the scalar field is assumed to be $\mathbb R$, unless stated otherwise.

\subsection{UMD Banach spaces}\label{subsec:UMD}\nopagebreak
A Banach space $X$ is called a {\em UMD space} if for some (or equivalently, for 
all)
$p \in (1,\infty)$ there exists a constant $\beta>0$ such that
for every $N \geq 1$, every martingale
difference sequence $(d_n)^N_{n=1}$ in $L^p(\Omega; X)$, and every scalar-valued 
sequence
$(\varepsilon_n)^N_{n=1}$ such that $|\varepsilon_n|=1$ for each $n=1,\ldots,N$
we have
\[
\Bigl(\mathbb E \Bigl\| \sum^N_{n=1} \varepsilon_n d_n\Bigr\|^p\Bigr )^{\frac 
1p}
\leq \beta \Bigl(\mathbb E \Bigl \| \sum^N_{n=1}d_n\Bigr\|^p\Bigr )^{\frac 1p}.
\]
The least admissible constant $\beta$ is denoted by $\beta_{p,X}$ and is called 
the {\em UMD$_p$~constant} or, if the value of $p$ is understood, the {\em UMD constant}, of $X$.
It is well-known that UMD spaces obtain a large number 
of useful properties, such as being reflexive. Examples of UMD 
spaces include all finite dimensional spaces and the reflexive range of 
$L^q$-spaces, Besov spaces, Sobolev spaces, Schatten class spaces, and Orlicz spaces. Example of 
spaces without the UMD property include all nonreflexive Banach spaces, e.g.\ 
$L^1(0,1)$ and $C([0,1])$. We refer the reader to 
\cite{Burk01,HNVW1,Rubio86,Pis16} for details.

\subsection{Martingales and c\`adl\`ag processes}\label{subsec:martandcadlagprocesses}
Let $X$ be a Banach space, $\mathbb F = (\mathcal F_t)_{t\geq 0}$ be a filtration that satisfies the usual conditions (e.g.\ right-continuity). For each $1\leq p< \infty$ a martingale $M:\mathbb R_+ \times \Omega \to X$ is called an {\em $L^{p}$-martingale} (or, an {\em $L^p$-integrable} martingale) if $M_{\infty} := \lim_{t\to \infty} M_{t}$ exists in $L^p(\Omega; X)$; we call $M$ an {\em $L^{\infty}$-martingale} if $\|M_t\|_{L^{\infty}(\Omega; X)}$ is uniformly bounded in $t\in \mathbb R_+$. For a given $p\in [1,\infty)$ we will denote the set of all $X$-valued $L^p$-integrable $\mathbb F$-martingales by $\mathcal M_X^{p}(\mathbb F)$; further, we will denote the set of all $X$-valued local $L^p$-integrable \mbox{$\mathbb F$-mar}\-tin\-gales by $\mathcal M_X^{p,\rm loc}(\mathbb F)$. Note that $\mathcal M_X^{p}(\mathbb F)$ is a Banach space endowed with the norm $\|M\|_{\mathcal M_X^{p}(\mathbb F)} := \|M_{\infty}\|_{L^p(\Omega; X)}$.

We will denote by $\mathcal M_X^{1,\infty}(\mathbb F)$ the set of all $X$-valued local $\mathbb F$-martingales $M$ such that
\[
 \sup_{\lambda>0}\lambda \mathbb P(M^*_{\infty}>\lambda)<\infty.
\]
In the sequel we will omit $\mathbb F$ from the notations $\mathcal M_X^{p}(\mathbb F)$, $\mathcal M_X^{p,\rm loc}(\mathbb F)$, and $\mathcal M_X^{1,\infty}(\mathbb F)$.

\begin{remark}
 Let $X$ be a Banach space, $M:\mathbb R_+ \times \Omega \to X$ be a martingale. Then $(N_t)_{t\geq 0} := (\|M_t\|)_{t\geq 0}$ is a submartingale by \cite[Lemma 7.11]{Kal} and the fact that $x\mapsto \|x\|$ is a convex function on $X$. Moreover, by \cite[Theorem 1.3.8(i)]{KS} we have that for each $t\geq 0$, $p\geq 1$ and $\lambda >0$
 \begin{equation}\label{eq:1,inftyineqforsubmart}
  \mathbb P(M^*_t>\lambda)\leq \frac{\mathbb E \|M_t\|^p}{\lambda^p}.
 \end{equation}
\end{remark}

\smallskip

A function $f:\mathbb R_+ \to X$ is called c\`adl\`ag (a French abbreviation of the phrase ``continuous from right, limits from left'') if it is right-continuous and if it has left-hand limits.
A process $V:\mathbb R_+ \times \Omega \to X$ is called c\`adl\`ag if it has c\`adl\`ag paths. For instance, any martingale $M:\mathbb R_+ \times \Omega \to X$ has a c\`adl\`ag version given $\mathbb F$ satisfies the usual assumptions (see \cite{Y17FourUMD} for details in the vector-valued setting).

Let $\tau$ be a stopping time. If $V:\mathbb R_+ \times \Omega \to X$ is c\`adl\`ag, then we can define $\Delta V_{\tau}:\Omega \to X$ in the following way:
\[
 \Delta V_{\tau} = 
 \begin{cases}
  V_0,\;\;\; &\tau =0,\\
 V_{\tau} - \lim_{\eps\to 0}V_{0\vee(\tau-\eps)},\;\;\; &0<\tau < \infty,\\
 0,\;\;\;&\tau = \infty,
\end{cases}
\]
where $\lim_{\eps\to 0}V_{0\vee(\tau-\eps)}$ exists since $V$ has paths with left-hand limits.

\smallskip

One can define the so-called {\em ucp topology} (uniform convergence on compact sets in probability) on the linear space of all c\`adl\`ag adapted $X$-valued processes; convergence in this topology can be characterized in the following way: a sequence $(V^n)_{n\geq 1}$ of c\`adl\`ag adapted $X$-valued processes converges to $V:\mathbb R_+ \times \Omega \to X$ in the ucp topology if for any $t\geq 0$ and $K>0$ we have that
\begin{equation}\label{eq:ucpconvergencehow}
  \mathbb P \bigl(\sup_{0\leq s\leq t} \|V_s-V^n_s\|>K\bigr) \to 0\;\;\;\; n\to \infty.
\end{equation}
Then the following proposition holds.

\begin{proposition}\label{prop:Xvaluedcadlagareclosedunderucp}
 The linear space of all c\`adl\`ag adapted $X$-valued processes endowed with the ucp topology is complete.
\end{proposition}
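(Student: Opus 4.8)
The plan is to metrise the ucp topology and then run a standard ``fast subsequence'' completeness argument. For a càdlàg adapted process $U:\mathbb R_+\times\Omega\to X$ set $\rho_n(U):=\sup_{0\le s\le n}\|U_s\|$ and
\[
 d(U):=\sum_{n\ge 1}2^{-n}\,\mathbb E\bigl[1\wedge \rho_n(U)\bigr],\qquad d(V,W):=d(V-W).
\]
First I would check that $d$ is a metric on the space of càdlàg adapted processes (identified up to indistinguishability), and that $d(V^k,V)\to 0$ if and only if \eqref{eq:ucpconvergencehow} holds for every $t\ge 0$ and $K>0$. This is elementary: the weights $2^{-n}$ are summable, so $d(V^k,V)\to 0$ iff $\mathbb E[1\wedge\rho_n(V-V^k)]\to 0$ for each $n$; since $1\wedge(\cdot)$ is bounded and continuous, the latter is equivalent to $\rho_n(V-V^k)\to 0$ in probability; and because $t\mapsto\rho_t$ is nondecreasing, controlling all $\rho_n$ with $n\in\mathbb N$ controls $\rho_t$ for every real $t$. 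Thus ``complete'' means here: every $d$-Cauchy sequence converges in $d$.

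Now let $(V^k)_{k\ge 1}$ be $d$-Cauchy. Passing to a subsequence I may assume $d(V^{k},V^{k+1})\le 4^{-k}$. Then, for fixed $m\in\mathbb N$, isolating the $n=m$ term of the series gives $\mathbb E\bigl[1\wedge\rho_m(V^k-V^{k+1})\bigr]\le 2^{m}4^{-k}$, so by Chebyshev's inequality $\mathbb P\bigl(\rho_m(V^k-V^{k+1})>2^{-k}\bigr)\le 2^{m}2^{-k}$. By Borel--Cantelli, for each $m$ there is a null set off which $\rho_m(V^k-V^{k+1})\le 2^{-k}$ for all large $k$; taking the union over $m$ yields a single null set $\Omega_0$ such that for $\omega\notin\Omega_0$ the paths $s\mapsto V^k_s(\omega)$ form a Cauchy sequence with respect to the supremum norm on every compact interval.

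For such $\omega$ the path $V^k_\cdot(\omega)$ converges, uniformly on compacts, to some function $V_\cdot(\omega)$, and a uniform limit of càdlàg functions is again càdlàg (right-continuity and existence of left limits are preserved under uniform convergence by a routine $3\varepsilon$-argument), so $V_\cdot(\omega)$ is càdlàg; setting $V\equiv 0$ on $\Omega_0$ defines a càdlàg process. It is adapted because $V_t=\lim_k V^{k}_t$ a.s.\ and $\mathbb F$ satisfies the usual conditions (in particular $\mathcal F_0$ contains all null sets), so each $V_t$ is $\mathcal F_t$-measurable. Finally, $V^k\to V$ in $d$: on $\Omega\setminus\Omega_0$ we have $\rho_n(V^k-V)\to 0$ for every $n$, hence $\mathbb E[1\wedge\rho_n(V^k-V)]\to 0$ by dominated convergence and $d(V^k,V)\to 0$; since the original sequence is $d$-Cauchy and has a convergent subsequence, the whole sequence converges to $V$. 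The only points requiring a word of justification are the preservation of the càdlàg property under uniform limits and the measurability bookkeeping under the usual conditions; neither is serious, so I do not expect a genuine obstacle — the proposition is essentially a soft consequence of the completeness of $\bigl(D([0,n];X),\|\cdot\|_{\infty}\bigr)$.
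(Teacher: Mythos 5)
Your argument is correct and is essentially the same as the one the paper relies on: the paper simply cites the vector-valued analogue of \cite[Theorem 62]{Prot} (via \cite[Problem V.1]{Pollard}), whose proof is exactly your metrisation of ucp followed by the fast-subsequence/Borel--Cantelli argument and the fact that uniform limits of c\`adl\`ag paths are c\`adl\`ag. You have merely written out in full what the paper leaves to the references.
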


\begin{proof}
 This is just the vector-valued analogue of \cite[Theorem 62]{Prot}, for which one needs to apply the vector-valued variation of \cite[Problem V.1]{Pollard}.
\end{proof}

\smallskip

We state without proof the following elementary but useful statement.

\begin{lemma}\label{lem:supofcontfuncwithlimitiscont}
 Let $X$ be a Banach space, $(f_n)_{n\geq 1}$, $f$ be continuous \mbox{$X$-va}\-lu\-ed functions on $[0,1]$ such that $f_n \to f$ in $C([0,1]; X)$ as $n\to \infty$. Then the function $F:[0,1]\to\mathbb R_+$ defined as follows
 \[
  F(t) = \sup_n\|f_n(t)\|,\;\;\; t\in [0,1],
 \]
is continuous.
\end{lemma}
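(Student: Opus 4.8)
The plan is to exhibit $F$ as a uniform limit of continuous real-valued functions on $[0,1]$; since the uniform limit of continuous functions is continuous (equivalently, $C([0,1];\mathbb R)$ is closed in the bounded functions under the sup norm), this is enough. First one observes that $F$ is finite-valued: uniform convergence $f_n\to f$ forces $(f_n)_{n\geq 1}$ to be bounded in $C([0,1];X)$, so $F(t)\leq \sup_{n}\|f_n\|_{C([0,1];X)}<\infty$ for every $t\in[0,1]$.

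For each $n$ put $g_n(t):=\|f_n(t)\|$ and $g(t):=\|f(t)\|$; these are continuous on $[0,1]$ since the norm is continuous on $X$. Fix $\eps>0$ and, using $f_n\to f$ in $C([0,1];X)$, choose $N\geq 1$ so large that $\|f_n-f\|_{C([0,1];X)}<\eps$ for all $n>N$; then $|g_n(t)-g(t)|<\eps$ for all such $n$ and all $t$, hence the tail supremum $S(t):=\sup_{n>N}g_n(t)$ satisfies $|S(t)-g(t)|\leq\eps$ for every $t\in[0,1]$. Now set $H_N(t):=\max(g_1(t),\ldots,g_N(t))$ and
\[
  \Phi_N(t):=\max\bigl(g_1(t),\ldots,g_N(t),\,g(t)\bigr)=\max\bigl(H_N(t),g(t)\bigr),\qquad t\in[0,1],
\]
which is continuous as the maximum of finitely many continuous functions. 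Since $F(t)=\max(H_N(t),S(t))$ and the map $a\mapsto\max(c,a)$ is $1$-Lipschitz, we get $|F(t)-\Phi_N(t)|\leq|S(t)-g(t)|\leq\eps$ for all $t$, i.e.\ $\|F-\Phi_N\|_{C([0,1];\mathbb R)}\leq\eps$. Letting $\eps\downarrow0$ shows $F$ lies in the uniform closure of $C([0,1];\mathbb R)$, so $F$ is continuous.

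The only subtle point — and the natural obstacle — is that the tail supremum $S(t)=\sup_{n>N}g_n(t)$ need not itself be continuous (it is merely lower semicontinuous), so one cannot split $F$ into two genuinely continuous pieces. The argument above sidesteps this by comparing $S$ with the continuous function $g$ rather than controlling $S$ directly, absorbing the resulting $\eps$-error via completeness. Alternatively, one can argue semicontinuity from both sides: $F=\sup_n g_n$ is lower semicontinuous as a supremum of continuous functions, while for the same $N$ one has $F(t)\leq\max\bigl(H_N(t),g(t)+\eps\bigr)$ with $H_N\leq F$ and $g\leq F$, so $\limsup_{t\to t_0}F(t)\leq F(t_0)+\eps$ for every $\eps>0$, giving upper semicontinuity; together these yield continuity.
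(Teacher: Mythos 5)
Your argument is correct and complete. Note that the paper itself states this lemma without proof (it is labelled as ``elementary but useful''), so there is no official argument to compare against; your tail-splitting device $F=\max(H_N,S)$ with $|S-g|\leq\eps$, which realizes $F$ as a uniform limit of the continuous functions $\Phi_N=\max(H_N,g)$, is a clean way to supply the missing details, and your remark that the naive split fails because the tail supremum $S$ is only lower semicontinuous correctly identifies the one genuine subtlety. The alternative two-sided semicontinuity argument is equally valid (using $H_N\leq F$ and $g=\|f\|=\lim_n\|f_n\|\leq F$ pointwise for the upper bound).
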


\subsection{Purely discontinuous martingales}\label{subsec:pdmart}
Let $M:\mathbb R_+\times \Omega \to \mathbb R$ be a local martingale. Then $M$ 
is called {\em purely discontinuous} if $[M]$ is a pure jump process (i.e.\ 
$[M]$ has a version that is a constant a.s.\ in time). Let $X$ be a Banach 
space, $M:\mathbb R_+\times \Omega \to X$ be a local martingale. Then $M$ is 
called {\em purely discontinuous} if for each $x^* \in X^*$ a local martingale 
$\langle M, x^*\rangle$ is purely discontinuous. The following proposition can be found in \cite{Y17MartDec,DY17}.

\begin{proposition}\label{thm:purdiscorthtoanycont1}
  A martingale $M:\mathbb R_+\times \Omega \to X$ is purely discontinuous if and only if $M N$ is a martingale for any continuous bounded martingale $N:\mathbb R_+ \times \Omega \to \mathbb R$ such that $N_0 = 0$. 
 \end{proposition}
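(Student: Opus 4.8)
The plan is to reduce everything to the scalar case via testing against functionals $x^*\in X^*$, and to use the standard characterization of scalar purely discontinuous martingales in terms of orthogonality. First I would prove the forward implication. Suppose $M:\mathbb R_+\times\Omega\to X$ is purely discontinuous and let $N:\mathbb R_+\times\Omega\to\mathbb R$ be a continuous bounded martingale with $N_0=0$. It suffices to check that $\langle M,x^*\rangle N$ is a martingale for every $x^*\in X^*$, since then for $s\le t$ and $A\in\mathcal F_s$ one has $\E \one_A \langle M_t,x^*\rangle N_t = \E \one_A \langle M_s,x^*\rangle N_s$ for all $x^*$, hence $\E\one_A M_tN_t = \E\one_A M_sN_s$ (the conditional expectation commutes with bounded functionals). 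Now $\langle M,x^*\rangle$ is a scalar purely discontinuous local martingale by definition, and $N$ is a continuous bounded martingale, so by the scalar theory (e.g.\ the classical fact that a purely discontinuous local martingale is orthogonal to every continuous local martingale, so that their product is a local martingale; see \cite{Kal} or \cite{Prot}) the product $\langle M,x^*\rangle N$ is a local martingale. One must upgrade ``local martingale'' to ``martingale'': since $M$ is a genuine martingale and $N$ is bounded, a localizing sequence $(\tau_k)$ gives $\E\sup_{s\le t}|\langle M_{s\wedge\tau_k},x^*\rangle N_{s\wedge\tau_k}|\le \|N\|_\infty \E\sup_{s\le t}|\langle M_s,x^*\rangle|$, which is finite by Doob's $L^1$ maximal inequality applied to the $L^1$-martingale $\langle M,x^*\rangle$ — wait, Doob's maximal inequality only gives the weak bound in $L^1$; instead I would use that $\langle M_{\cdot\wedge\tau_k},x^*\rangle N_{\cdot\wedge\tau_k}$ is uniformly integrable on $[0,t]$ because $|\langle M_{s\wedge\tau_k},x^*\rangle N_{s\wedge\tau_k}|\le \|N\|_\infty |\langle M_{s\wedge\tau_k},x^*\rangle|$ and $(\langle M_{s\wedge\tau_k},x^*\rangle)_{k}$ is dominated in the UI sense by the terminal-time martingale $(\langle M_t,x^*\rangle)$ via optional stopping, so we may pass to the limit $k\to\infty$ in the martingale property.

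For the converse, assume $MN$ is a martingale for every continuous bounded $N$ with $N_0=0$; I must show each $\langle M,x^*\rangle$ is purely discontinuous. Decompose the scalar martingale $\langle M,x^*\rangle$ into its continuous and purely discontinuous parts, $\langle M,x^*\rangle = Y^c + Y^q$ with $Y^c_0=0$ (this canonical decomposition exists and is unique in the scalar case, cf.\ the discussion preceding the proposition). I claim $Y^c\equiv 0$. Testing against $N$: for every continuous bounded martingale $N$ with $N_0=0$ we know $\langle M,x^*\rangle N = \langle MN,x^*\rangle$ is a martingale, hence $Y^c N + Y^q N$ is a martingale; but $Y^q N$ is a martingale by the forward implication already proved (applied in the scalar case), so $Y^c N$ is a martingale for every such $N$. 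Taking $N = Y^c$ stopped at a reducing sequence of stopping times (to make it bounded) and using that the product of a continuous local martingale with itself minus its quadratic variation $[Y^c]$ is a local martingale, we get that $[Y^c]$ is a local martingale; being also predictable and of finite variation, $[Y^c]$ is constant, so $Y^c$ is constant, so $Y^c\equiv 0$ since $Y^c_0=0$. Hence $\langle M,x^*\rangle = Y^q$ is purely discontinuous.

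The main obstacle I anticipate is the bookkeeping around localization and uniform integrability: the hypotheses are stated for genuine martingales $M$ while the scalar orthogonality facts are naturally phrased for local martingales, so some care is needed to pass between ``product is a local martingale'' and ``product is a martingale'' in both directions, and to justify interchanging the bounded functional $x^*$ with conditional expectations and limits. None of these steps is deep, but they are where an incorrect argument would slip in; everything else is a direct appeal to the scalar theory of purely discontinuous martingales and their orthogonality to continuous martingales.
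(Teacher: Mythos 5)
Your argument is correct and follows the same route as the sources the paper cites for this proposition (the paper itself gives no proof, referring to \cite{Y17MartDec,DY17}): reduce to the scalar case by testing against $x^*\in X^*$, invoke the classical orthogonality of purely discontinuous and continuous local martingales, and handle the passage from local martingale to martingale by uniform integrability of the stopped family dominated by conditional expectations of $\langle M_t,x^*\rangle$. The only imprecision is in the converse, where $Y^qN$ is in general only a \emph{local} martingale (the purely discontinuous part $Y^q$ of the scalar Meyer--Yoeurp decomposition need not be a genuine martingale), so you should conclude only that $Y^cN$ is a local martingale --- which is all that your subsequent localization argument showing $[Y^c]\equiv 0$ actually needs.
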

 
In the sequel we will use the following lemma, which proof can be found in \cite{Y17MartDec,DY17}.
\begin{lemma}\label{lemma:contpuredisczero}
 Let $X$ be a Banach space, $M:\mathbb R_+ \times \Omega \to X$ be a martingale which is both continuous and purely discontinuous. Then $M = M_0$ a.s.
\end{lemma}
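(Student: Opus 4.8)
The statement to prove is Lemma~\ref{lemma:contpuredisczero}: if $M$ is both continuous and purely discontinuous, then $M = M_0$ a.s.

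The plan is to reduce to the scalar-valued case via Hahn-Banach, and there to use the defining property of purely discontinuous martingales together with the characterization of continuity through quadratic variation. First I would fix an arbitrary $x^* \in X^*$ and consider the real-valued local martingale $N := \langle M - M_0, x^*\rangle = \langle M, x^*\rangle - \langle M_0, x^*\rangle$, which starts at $0$. Since $M$ is continuous, $N$ is a continuous local martingale; since $M$ is purely discontinuous, $\langle M, x^*\rangle$ is purely discontinuous, hence so is $N$ (subtracting the $\mathcal F_0$-measurable constant $\langle M_0, x^*\rangle$ does not change the quadratic variation). Thus $[N]$ is simultaneously continuous (because $N$ is a continuous local martingale, so $[N]_t = N_t^2 - 2\int_0^t N\,\dd N$ is continuous) and a pure jump process, i.e.\ constant in time a.s.; since $[N]_0 = 0$ this forces $[N] \equiv 0$ a.s. By the Burkholder-Davis-Gundy inequality (or simply the fact that a continuous local martingale with vanishing quadratic variation and zero initial value is identically zero), $N \equiv 0$ a.s., i.e.\ $\langle M_t, x^*\rangle = \langle M_0, x^*\rangle$ a.s.\ for all $t$.

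To upgrade this to $M_t = M_0$ a.s., I would invoke separability: a càdlàg (here continuous) $X$-valued process has a version with values in a separable closed subspace $X_0$ of $X$, whose dual is norming via a countable set $\{x_n^*\}_{n\ge 1} \subset X^*$. For each fixed rational $t$ and each $n$, $\langle M_t, x_n^*\rangle = \langle M_0, x_n^*\rangle$ a.s.; intersecting over the countable family of pairs $(t, n)$ gives a single null set off which $\langle M_t, x_n^*\rangle = \langle M_0, x_n^*\rangle$ for all rational $t$ and all $n$, hence $M_t = M_0$ for all rational $t$, and then for all $t$ by right-continuity of paths.

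The only genuine subtlety is the scalar claim that a continuous purely discontinuous local martingale starting at zero vanishes; everything else is routine Hahn-Banach and separability bookkeeping. This scalar fact is standard (it is, for instance, essentially the uniqueness part of the classical decomposition of a local martingale into its continuous and purely discontinuous parts, cf.\ \cite{Kal}), so I expect no real obstacle — the main care needed is just to make sure the null sets are handled uniformly in $t$, which is why one passes through a countable dense set of times and a countable norming family of functionals before using path regularity.
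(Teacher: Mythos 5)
Your proof is correct, and it is essentially the argument behind the paper's (outsourced) proof: the paper only cites \cite{Y17MartDec,DY17} for Lemma~\ref{lemma:contpuredisczero}, and the proof given there is exactly this reduction via Hahn--Banach to the scalar fact that a continuous, purely discontinuous local martingale has vanishing quadratic variation and is therefore constant, followed by the standard separability/countable-norming-family bookkeeping to pass from $\langle M_t,x^*\rangle=\langle M_0,x^*\rangle$ to $M_t=M_0$. No gaps.
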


 The reader can find more on purely discontinuous martingales in \cite{Y17MartDec,DY17,JS,Jac79,Kal,Y17FourUMD}.

\subsection{Random measures}\label{subsec:randommeasures}

Let $(J, \mathcal J)$ be a measurable space. Then a family $\mu = \{\mu(\omega; \ud t, \ud x), \omega \in \Omega\}$ of nonnegative measures on $(\mathbb R_+ \times J; \mathcal B(\mathbb R_+)\otimes \mathcal J)$ is called a {\em random measure}. A random measure $\mu$ is called {\em integer-valued} if it takes values in $\mathbb N\cup\{\infty\}$, i.e.\ for each $A \in \mathcal B(\mathbb R_+)\otimes \mathcal F\otimes \mathcal J$ one has that $\mu(A) \in \mathbb N\cup\{\infty\}$ a.s., and if $\mu(\{t\}\times J)\in \{0,1\}$ a.s.\ for all $t\geq 0$.  

Recall that $\mathcal P$ and $\mathcal O$ denote the predictable and optional $\sigma$-algebras on $\mathbb R_+\times \Omega$ and $\widetilde{\mathcal P}= \mathcal P \otimes \mathcal J$ and $\widetilde {\mathcal O}:=\mathcal O \otimes \mathcal J$ are the induced $\sigma$-algebras on $\widetilde {\Omega} := \mathbb R_+ \times \Omega \times J$. A~process $F:\mathbb R_+\times \Omega \to \mathbb R$ is called {\em optional} if it is $\mathcal O$-measurable. A random measure $\mu$ is called {\em optional} (resp. {\em predictable}) if for any $\widetilde{\mathcal O}$-measurable (resp. $\widetilde{\mathcal P}$-measurable) nonnegative $F:\mathbb R_+ \times \Omega \times J \to \mathbb R_+$ the stochastic integral 
\[
 (t,\omega) \mapsto \int_{\mathbb R_+\times J}\mathbf 1_{[0,t]}(s)F(s,\omega,x) \mu(\omega;\ud s, \ud x),\;\;\; t\geq 0, \ \omega \in \Omega,
\]
as a function from $\mathbb R_+ \times \Omega$ to $\overline{\mathbb R}_+$ is optional (resp.\ predictable).  

Let $X$ be a Banach space. 
Then we can extend stochastic integration to \mbox{$X$-valued} processes in the following way. Let $F:\mathbb R_+ \times \Omega \times J  \to X$, $\mu$ be a random measure. The integral
\[
 t\mapsto \int_{\mathbb R_+ \times J} F(s,\cdot, x)\mathbf 1_{[0,t]}(s)\mu(\cdot; \ud s, \ud x),\;\;\; t\geq 0,
\]
is well-defined and optional (resp.\ predictable) if $\mu$ is optional (resp.\ predictable), $F$ is $\widetilde {\mathcal O}$-strongly-measurable (resp.\ $\widetilde {\mathcal P}$-strongly-measurable), and $\int_{\mathbb R_+ \times J}\|F\|\ud\mu$ is a.s.\ bounded.

A random measure $\mu$ is called $\widetilde{\mathcal P}$-$\sigma$-finite if there exists an increasing sequence of sets $(A_n)_{n\geq 1}\subset \widetilde{\mathcal P}$ such that $\int_{\mathbb R_+ \times J} \mathbf 1_{A_n}(s,\omega,x)\mu(\omega; \ud s, \ud x)$ is finite a.s.\ and $\cup_n A_n = \mathbb R_+ \times \Omega \times J$. According to \cite[Theorem II.1.8]{JS} every $\widetilde{\mathcal P}$-$\sigma$-finite optional random measure $\mu$ has a {\em compensator}: a~unique $\widetilde{\mathcal P}$-$\sigma$-finite predictable random measure $\nu$ such that $\mathbb E \int_{\mathbb R_+ \times J}W \ud \mu = \mathbb E  \int_{\mathbb R_+ \times J}W \ud \nu$ for each $\widetilde{\mathcal P}$-measurable real-valued nonnegative $W$. We refer the reader to \cite[Chapter II.1]{JS} for more details on random measures. For any optional $\widetilde{\mathcal P}$-$\sigma$-finite measure $\mu$ we define the associated compensated random measure by $\bar{\mu} = \mu -\nu$. 

For each $\widetilde{\mathcal P}$-strongly-measurable $F:\mathbb R_+ \times \Omega \times J \to X$ such that 
$$
\mathbb E \int_{\mathbb R_+ \times J}\|F\| \ud \mu< \infty
$$ 
(or, equivalently, $\mathbb E \int_{\mathbb R_+ \times J}\|F\| \ud \nu<\infty$, see the definition of a compensator above) we can define a process $t \mapsto \int_{[0,t] \times J}\|F\| \ud  \bar{\mu}$ by $\int_{[0,t] \times J}F \ud  {\mu} - \int_{[0,t] \times J}F \ud  {\nu}$. The following lemma is a vector-valued version of \cite[Definition 1.27]{JS}.

\begin{lemma}\label{lem:intwrtrmidapdlocmart}
 Let $X$ be a Banach space, $\mu$ be a $\widetilde{\mathcal P}$-$\sigma$-finite optional random measure, $F:\mathbb R_+ \times \Omega \times J \to X$ be $\widetilde{\mathcal P}$-strongly-measurable such that $\mathbb E \int_{\mathbb R_+ \times J} \|F\|\ud \mu < \infty$. Then $\bigl(\int_{[0,t]\times J} F \ud \bar{\mu}\bigr)_{t\geq 0}$ is a purely discontinuous $X$-valued martingale.
\end{lemma}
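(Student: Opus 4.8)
The plan is to reduce to simple integrands, where the statement follows from the real-valued theory of stochastic integration against compensated random measures, and then to pass to the limit in $\mathcal M_X^1$, using Proposition~\ref{thm:purdiscorthtoanycont1} to keep track of pure discontinuity. First I would record that the process is well-behaved: since $\mathbb E\int_{\mathbb R_+\times J}\|F\|\ud\mu<\infty$ and, by the defining property of the compensator applied to the $\widetilde{\mathcal P}$-measurable function $\|F\|$, also $\mathbb E\int_{\mathbb R_+\times J}\|F\|\ud\nu=\mathbb E\int_{\mathbb R_+\times J}\|F\|\ud\mu<\infty$, both $t\mapsto\int_{[0,t]\times J}F\ud\mu$ and $t\mapsto\int_{[0,t]\times J}F\ud\nu$ are, for a.e.\ $\omega$, $X$-valued integrals of bounded variation which are c\`adl\`ag in $t$ by dominated convergence; hence $M_t:=\int_{[0,t]\times J}F\ud\bar\mu$ is an adapted c\`adl\`ag process with $M_\infty\in L^1(\Omega;X)$. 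Replacing $F$ by $F\mathbf 1_{\{\|F\|>1/n\}}$ and letting $n\to\infty$ (dominated convergence), I may assume $F$ is supported on a set $G\in\widetilde{\mathcal P}$ with $\mathbb E\int_{\mathbb R_+\times J}\mathbf 1_G\ud\mu<\infty$; on the finite measure space $(G,\widetilde{\mathcal P}|_G)$ equipped with $A\mapsto\mathbb E\int_{\mathbb R_+\times J}\mathbf 1_A\ud\mu$ the $\widetilde{\mathcal P}$-simple $X$-valued functions are dense in $L^1$, so I can pick $\widetilde{\mathcal P}$-simple $F_n$, each with $\mathbb E\int_{\mathbb R_+\times J}\|F_n\|\ud\mu<\infty$, such that $\mathbb E\int_{\mathbb R_+\times J}\|F_n-F\|\ud\mu\to0$.

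Next I would treat the simple case $F=\sum_{k=1}^m\mathbf 1_{A_k}x_k$ with $A_k\in\widetilde{\mathcal P}$, $\mathbb E\int_{\mathbb R_+\times J}\mathbf 1_{A_k}\ud\mu<\infty$ and $x_k\in X$. Then $M_t=\sum_{k=1}^m x_k\,\bar\mu(\mathbf 1_{[0,t]\times J}\mathbf 1_{A_k})$, and by the real-valued theory (see \cite[Chapter~II.1]{JS}) each $t\mapsto\bar\mu(\mathbf 1_{[0,t]\times J}\mathbf 1_{A_k})$ is a real martingale of integrable variation, hence purely discontinuous. Thus $M$ is a martingale, and for every $x^*\in X^*$ the martingale $\langle M,x^*\rangle=\sum_{k=1}^m\langle x_k,x^*\rangle\,\bar\mu(\mathbf 1_{[0,\cdot]\times J}\mathbf 1_{A_k})$ is a finite linear combination of purely discontinuous real martingales, hence purely discontinuous (a finite-variation real martingale is purely discontinuous); so $M$ is a purely discontinuous $X$-valued martingale.

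Finally I would pass to the limit. With $M^n_t:=\int_{[0,t]\times J}F_n\ud\bar\mu$ and $M_t:=\int_{[0,t]\times J}F\ud\bar\mu$, for every $t\in[0,\infty]$ one has $\|M^n_t-M_t\|\leq\int_{\mathbb R_+\times J}\|F_n-F\|\ud\mu+\int_{\mathbb R_+\times J}\|F_n-F\|\ud\nu$, whence $\mathbb E\|M^n_t-M_t\|\leq 2\,\mathbb E\int_{\mathbb R_+\times J}\|F_n-F\|\ud\mu\to0$; in particular $M^n\to M$ in $\mathcal M_X^1$. Taking $L^1$-limits in $\mathbb E[M^n_t\mid\mathcal F_s]=M^n_s$ (for $s\leq t$) shows $M$ is a martingale. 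Moreover, for any continuous bounded real martingale $N$ with $N_0=0$, Proposition~\ref{thm:purdiscorthtoanycont1} gives that each $M^nN$ is a martingale, and $\mathbb E\|M^n_tN_t-M_tN_t\|\leq\|N\|_{\infty}\,\mathbb E\|M^n_t-M_t\|\to0$, so taking $L^1$-limits in $\mathbb E[M^n_tN_t\mid\mathcal F_s]=M^n_sN_s$ shows $MN$ is a martingale; by Proposition~\ref{thm:purdiscorthtoanycont1} once more, $M$ is purely discontinuous, completing the proof.

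The only point requiring genuine care is the approximation: the simple integrands must be chosen so that they approximate $F$ in the $\mathbb E\int_{\mathbb R_+\times J}\|\cdot\|\ud\mu$-norm \emph{and} have finite such norm themselves, which is precisely why one first truncates $F$ to a set of finite $M_\mu$-measure. Everything else is a routine combination of the real-valued theory, Proposition~\ref{thm:purdiscorthtoanycont1}, and the $L^1$-continuity of conditional expectation.
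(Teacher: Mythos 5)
Your proof is correct, but it takes a genuinely different route from the paper's. The paper disposes of the lemma in two lines by scalarization: since pure discontinuity of an $X$-valued martingale is \emph{defined} through the functionals $x^*\in X^*$, and since a Bochner-integrable adapted c\`adl\`ag process that is weakly a martingale is a martingale, it suffices to check that $t\mapsto\int_{[0,t]\times J}\langle F,x^*\rangle\ud\bar{\mu}$ is a purely discontinuous real martingale for each $x^*$, which is exactly the scalar theory around \cite[Definition II.1.27]{JS}. You instead truncate $F$ to a set of finite Dol\'eans measure, approximate by $\widetilde{\mathcal P}$-simple integrands, invoke the scalar theory only for indicators $\mathbf 1_{A}$ with $A\in\widetilde{\mathcal P}$, and pass to the limit in $\mathcal M^1_X$, using Proposition~\ref{thm:purdiscorthtoanycont1} to carry pure discontinuity through the $L^1$-limit. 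Your route is longer but more self-contained: it re-proves inline the $L^1$-closedness of the class of purely discontinuous martingales (which the paper only records later, as Lemma~\ref{lem:pdmartingalesisaclosedsbsinL^1}), and it sidesteps having to justify the identity $\bigl\langle\int F\ud\bar{\mu},x^*\bigr\rangle=\int\langle F,x^*\rangle\ud\bar{\mu}$ for general strongly measurable $F$, since for simple integrands it is trivial. The paper's route buys brevity at the price of leaning more heavily on the scalar reference. The one point you rightly flag as delicate --- producing simple approximants that themselves have finite $\mathbb E\int\|\cdot\|\ud\mu$-norm, via the preliminary truncation to $\{\|F\|>1/n\}$ --- is handled correctly, and the estimate $\mathbb E\|M^n_t-M_t\|\leq 2\,\mathbb E\int_{\mathbb R_+\times J}\|F_n-F\|\ud\mu$ via the compensator identity for the $\widetilde{\mathcal P}$-measurable function $\|F_n-F\|$ is exactly what makes the limit passage work. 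No gaps.
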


\begin{proof}
 It is sufficient to show that 
 $$
 t\mapsto \Bigl\langle \int_{[0,t]\times J} F \ud \bar{\mu},x^* \Bigr\rangle =  \int_{[0,t]\times J} \langle F,x^*\rangle  \ud \bar{\mu},\;\;\; t\geq 0,
 $$
 is a purely discontinuous martingale for each $x^* \in X^*$, which can be shown similarly the discussion right below \cite[Definition 1.27]{JS}.
\end{proof}

The reader can find more information on random measures in \cite{JS,DY17,Kal,MarRo,Marinelli13,Nov75}.

\subsection{Predictable and totally inaccessible stopping times}\label{subsec:prandtotinacsttimes}

A stopping time $\tau$ is called {\em predictable} if there exists a sequence of stopping times $(\tau_n)_{n\geq 0}$ such that $\tau_n<\tau$ a.s.\ on $\{\tau > 0\}$ and $\tau_n \nearrow \tau$ a.s.\ as $n\to \infty$. A stopping time $\tau$ is called {\em totally inaccessible} if $\mathbb P(\tau=\sigma)=0$ for any predictable stopping time $\sigma$. Later we will need the following lemma.

\begin{lemma}\label{lem:DeltaV_tau=0forpredictVandtotinactau}
 Let $X$ be a Banach space, $V:\mathbb R_+ \times \Omega \to X$ be a predictable c\`adl\`ag process. Let $\tau$ be a totally inaccessible stopping time. Then $\Delta V_{\tau} = 0$ a.s.
\end{lemma}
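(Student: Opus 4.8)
The claim is that a predictable c\`adl\`ag process $V$ has no jump at a totally inaccessible stopping time $\tau$. The plan is to reduce to the scalar case by testing against functionals $x^* \in X^*$, but since $X^*$ may be large it is cleaner to work directly: the process $t\mapsto \Delta V_t \one_{\{t=\tau\}}$ lives on the graph $[\![\tau]\!]$, and we want to show this graph carries no mass for the jumps of a predictable process. First I would recall the standard fact (see e.g.\ \cite[Chapter I]{JS} or \cite{Kal}) that a c\`adl\`ag adapted process $V$ admits only countably many jump times, and that if $V$ is moreover \emph{predictable} then all of its jump times are predictable stopping times: more precisely, the set $\{(t,\omega): \Delta V_t(\omega)\neq 0\}$ is a predictable thin set, hence (by the exhaustion lemma for thin predictable sets) is contained in $\bigcup_n [\![\sigma_n]\!]$ for a sequence of predictable stopping times $(\sigma_n)_{n\geq 1}$ with disjoint graphs. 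In the vector-valued setting this is applied coordinatewise along a separating sequence $(x^*_k)$ in $X^*$, or one simply invokes that $\|\Delta V\|$ is a nonnegative predictable c\`adl\`ag-difference process and uses the scalar result for it; the countability of jump times and their predictability is not affected by passing to the norm.

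Granting this, the argument concludes quickly. On $\{\tau<\infty\}$, if $\Delta V_\tau \neq 0$ with positive probability, then $\mathbb P(\Delta V_\tau \neq 0) \leq \sum_n \mathbb P(\tau = \sigma_n)$, since on $\{\Delta V_\tau \neq 0\}$ the pair $(\tau,\omega)$ must lie in one of the graphs $[\![\sigma_n]\!]$, i.e.\ $\tau(\omega)=\sigma_n(\omega)<\infty$ for some $n$. But each $\sigma_n$ is predictable and $\tau$ is totally inaccessible, so $\mathbb P(\tau = \sigma_n) = 0$ for every $n$ by the very definition of total inaccessibility given just above in Subsection~\ref{subsec:prandtotinacsttimes}. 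Hence $\mathbb P(\Delta V_\tau \neq 0)=0$, i.e.\ $\Delta V_\tau = 0$ a.s.\ on $\{\tau<\infty\}$; on $\{\tau=\infty\}$ we have $\Delta V_\tau = 0$ by the definition of $\Delta V_\tau$ recalled earlier. This gives $\Delta V_\tau = 0$ a.s.

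\textbf{Main obstacle.} The one genuine point to get right is the vector-valued version of the structural fact that the jumps of a predictable c\`adl\`ag process are exhausted by a sequence of predictable stopping times. For a \emph{separable} $X$ this follows by applying the scalar statement to $\langle V, x^*_k\rangle$ along a sequence $(x^*_k)_{k\geq 1}\subset X^*$ that is norming for $X$, since $\Delta V_t \neq 0$ iff $\langle \Delta V_t, x^*_k\rangle \neq 0$ for some $k$, and then taking the union of the countably many families of predictable stopping times exhausting the jumps of each $\langle V, x^*_k\rangle$. Since any c\`adl\`ag process takes values, on each compact time interval, in a separable subspace of $X$ (its range is the continuous image of a separable metric space together with the left limits), the separability reduction is harmless. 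Alternatively, and more directly, one applies the scalar exhaustion result to the real-valued predictable process $\|V\|$ — note $t\mapsto \|V_t\|$ is c\`adl\`ag and predictable, and $\Delta V_t \neq 0$ forces $\|V_t\| - \|V_{t-}\| \neq 0$ only when there is a genuine jump, but more care is needed since the norm can be continuous across a jump; so the coordinatewise argument is the safe route. Beyond this, the proof is entirely bookkeeping with the definitions already set up in the excerpt.
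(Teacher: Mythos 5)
Your proof is correct and follows essentially the same route as the paper: the paper reduces to the scalar case by testing against functionals $x^*\in X^*$ and then cites \cite[Proposition I.2.24]{JS}, which is exactly the scalar fact (jumps of a predictable c\`adl\`ag process are exhausted by predictable stopping times, none of which can coincide with a totally inaccessible $\tau$) that you prove by hand. Your extra care about the separability of the range and the choice of a countable separating family is a reasonable unpacking of a point the paper leaves implicit, not a different argument.
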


\begin{proof}
 It is sufficient to show that $\langle \Delta V_{\tau}, x^*\rangle = 0$ a.s.\ for any $x^* \in X^*$. Then the statement follows from \cite[Proposition I.2.24]{JS}.
\end{proof}

Let $X$ be a Banach space, $M:\mathbb R_+ \times \Omega\to X$ be a local martingale. Then $M$ has a c\`adl\`ag version (see e.g.\ \cite{Y17FourUMD}), and therefore we can define adapted c\`adl\`ag process $M^{\tau-}=(M^{\tau-}_t)_{t\geq 0}$ in the following way
\begin{equation}\label{eq:defofM^tau-ingen}
 M^{\tau-}_t :=\lim_{\eps\to 0}M_{(\tau-\eps)\wedge t},\;\;\; t\geq 0,
\end{equation}
where we set $M_t=0$ for $t<0$. Notice that $M^{\tau-}$ is not necessarily a local martingale. For instance if $X = \mathbb R$ and $M$ is a compensated Poisson process, $\tau:= \inf_{t\geq 0}\{\Delta M_t>0\}$, then $M^{\tau-}_t = -(t\wedge \tau)$ a.s.\ for each $t\geq 0$, so it is a supermartingale which is not even a local martingale. Nevertheless, if $\tau$ is a predictable stopping time, then the following lemma holds. Recall that for any stopping time $\tau$ we define $\sigma$-field $\mathcal F_{\tau-}$ in the following way
$$
\mathcal F_{\tau-} := \sigma\{\mathcal F_0 \cup (\mathcal F_{t}\cap \{t<\tau\}), t> 0\}
$$
(see \cite[p.\ 491]{Kal} for details).

\begin{lemma}\label{lem:predtaupreservemartingales}
 Let $X$ be a Banach space, $M:\mathbb R_+ \times \Omega \to X$ be a local martingale, $\tau$ be a predictable stopping time. Then $M^{\tau-}$ defined as in \eqref{eq:defofM^tau-ingen} is a local martingale. Moreover, if $M$ is an $L^1$-martingale, then $M^{\tau-}$ is an $L^1$-martingale as well.
\end{lemma}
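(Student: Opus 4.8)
The plan is to reduce first to the case $\tau>0$ a.s., then to settle it for $L^1$-martingales by approximating along an announcing sequence, and finally to reach general local martingales by localization.

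\emph{Reduction to $\tau>0$.} The set $\{\tau=0\}$ belongs to $\F_0$, and by \eqref{eq:defofM^tau-ingen} (recall that $M_s=0$ for $s<0$) the process $M^{\tau-}$ vanishes identically on $\{\tau=0\}$. Since $\one_A N$ is a local (resp.\ $L^1$-) martingale whenever $N$ is one and $A\in\F_0$, and since the stopping time $\tau'$ given by $\tau':=\tau$ on $\{\tau>0\}$ and $\tau':=\infty$ on $\{\tau=0\}$ is predictable (announce it by $\tau_n\one_{\{\tau>0\}}+n\one_{\{\tau=0\}}$), strictly positive, and satisfies $M^{\tau-}=\one_{\{\tau>0\}}M^{\tau'-}$, it suffices to prove the lemma for $\tau'$. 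So I may assume $\tau>0$ a.s.

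\emph{The $L^1$ case.} Suppose $M\in\mathcal M_X^{1}$, so $M_t=\E[M_\infty\mid\F_t]$ with $M_\infty\in L^1(\Omega;X)$, and fix an announcing sequence $\tau_n\nearrow\tau$ with $\tau_n<\tau$ a.s. From \eqref{eq:defofM^tau-ingen}, the right-continuity and existence of left limits of $M$, and $\tau>0$, one reads off that $M^{\tau_n}_t=M_{\tau_n\wedge t}\to M^{\tau-}_t$ a.s.\ for each $t\geq 0$ (for $t<\tau$ the sequence is eventually $M_t$; for $t\geq\tau$ it equals $M_{\tau_n}\to M_{\tau-}$). By optional stopping each $M^{\tau_n}$ is a martingale with $M^{\tau_n}_t=\E[M_\infty\mid\F_{\tau_n\wedge t}]$, and since $\F_{\tau_n\wedge t}$ increases in $n$, L\'evy's upward convergence theorem in $L^1(\Omega;X)$ — valid because $M_\infty$ is Bochner integrable — upgrades this to convergence in $L^1(\Omega;X)$, so that $M^{\tau-}_t=\E[M_\infty\mid\H_t]$ with $\H_t:=\bigvee_n\F_{\tau_n\wedge t}$. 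Passing the martingale identity $\E[M^{\tau_n}_t\mid\F_s]=M^{\tau_n}_s$ ($s\leq t$) to the $L^1$-limit shows $M^{\tau-}$ is an $\F$-martingale, and since $\H_t$ increases in $t$, a further application of L\'evy's theorem shows $M^{\tau-}_t$ converges in $L^1(\Omega;X)$ as $t\to\infty$, i.e.\ $M^{\tau-}\in\mathcal M_X^{1}$.

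\emph{The general case.} For an arbitrary local martingale $M$, choose bounded stopping times $\sigma_k\nearrow\infty$ a.s.\ (e.g.\ stop a localizing sequence at the deterministic times $k$) such that each $M^{\sigma_k}$ is an $L^1$-martingale. The two truncations commute, $(M^{\sigma_k})^{\tau-}=(M^{\tau-})^{\sigma_k}$, so by the previous paragraph $(M^{\tau-})^{\sigma_k}$ is an $L^1$-martingale for every $k$, whence $M^{\tau-}$ is a local martingale. I expect the only real friction to be bookkeeping: the reduction to $\tau>0$ is genuinely needed (any announcing sequence is nonnegative, so $M^{\tau_n}$ cannot converge to the identically zero process on $\{\tau=0\}$ unless $M_0=0$), and the pointwise limit $M^{\tau_n}_t\to M^{\tau-}_t$ must be checked with some care at $t=0$ and on $\{\tau=\infty\}$; beyond that the input is just optional stopping and L\'evy's theorem. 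An alternative route is to note $\langle M^{\tau-},x^*\rangle=\langle M,x^*\rangle^{\tau-}$ for each $x^*\in X^*$, quote the classical scalar statement, and lift it to $X$ using the a priori bound $\E\|M^{\tau-}_t\|\leq\E\|M_t\|$ together with a Hahn--Banach/separable-range argument to pass from weak to strong conditional expectations.
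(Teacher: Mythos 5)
Your proof is correct and follows essentially the same route as the paper's: an announcing sequence $\tau_n\nearrow\tau$, optional stopping to express $M^{\tau_n}_t$ as a conditional expectation, L\'evy's upward convergence theorem (in its vector-valued form) to pass to the limit, and localization for general local martingales. The only structural difference is that you apply the $L^1$-version of L\'evy's theorem directly to an $L^1$-martingale, whereas the paper first treats $L^\infty$-martingales (where $L^1$-convergence follows from a.s.\ convergence plus boundedness) and then approximates a general $L^1$-martingale by $L^\infty$ ones via the contraction bound $\mathbb E\|M^{\tau-}_t\|\leq\mathbb E\|M_t\|$; your shortcut is legitimate and slightly cleaner, and your reduction to $\tau>0$ plays the same role as the paper's normalization $M_0=0$.
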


\begin{proof}
 Without loss of generality we can let $M_0=0$ a.s. First assume that $M$ is an $L^{\infty}$-martingale. Let $(\tau_n)_{n\geq 1}$ be an announcing to $\tau$ sequence of stopping times, i.e.\ $\tau_n <\tau$ a.s.\ on $\{\tau >0\}$ and $\tau_n\nearrow \tau$ a.s. as $n\to \infty$. Then $M^{\tau_n}$ is an $L^1$-martingale for each $n\geq 1$. Moreover, $M^{\tau_n}_t \to M^{\tau-}_t$ a.s.\ as $n\to \infty$ for each $t\geq 0$. On the other hand, $M^{\tau_n}_t = \mathbb E (M_t|\mathcal F_{\tau_n}) \to \mathbb E (M_t|\mathcal F_{\tau-})$ a.s.\ as $n\to \infty$ by \cite[Theorem 3.3.8]{HNVW1} and \cite[Lemma 25.2(iii)]{Kal}, and hence in $L^1$ by the uniform boundedness due to the boundedness of $M_{\infty}$. Therefore for each $t\geq 0$ we have that $M^{\tau-}_t =\mathbb E (M_t|\mathcal F_{\tau-})$ is integrable, hence for all $0\leq s\leq t$ 
 \[
  \mathbb E(M^{\tau-}_t|\mathcal F_{s}) = \mathbb E \bigl(\lim_{n\to \infty} M^{\tau_n}_t|\mathcal F_{s}\bigr) = \lim_{n\to \infty }\mathbb E (M^{\tau_n}_t|\mathcal F_{s}) = \lim_{n\to \infty}M^{\tau_n}_s = M^{\tau-}_s,
 \]
where all the limits are taken in $L^1(\Omega; X)$. Hence $(M^{\tau-}_t)_{t\geq 0}$ is a martingale. Moreover, by \cite[Corollary 2.6.30]{HNVW1}
\begin{equation}\label{eq:ineqforM^tau-_t}
 \mathbb E \|M^{\tau-}_t\| = \mathbb E \|\mathbb E (M_{t}|\mathcal F_{\tau-})\| \leq \mathbb E \|M_{t}\|\leq \mathbb E \|M_{\infty}\|,\;\;\; t\geq 0.
\end{equation}

\smallskip

Now we treat the general case.  Without loss of generality using a stopping time argument assume that $M$ is an $L^1$-martingale. Let $(M^m)_{m\geq 1}$ be a sequence of \mbox{$X$-va}\-lued $L^{\infty}$-martingales such that $M^m_{\infty} \to M_{\infty}$ in $L^1(\Omega; X)$ as $m\to \infty$. Analogously the first part of the proof $M^{\tau-}_t =\mathbb E (M_t|\mathcal F_{\tau-})$ for each $t\geq 0$; moreover, by \eqref{eq:ineqforM^tau-_t} $\bigl((M^m)^{\tau-}_t\bigr)_{m\geq 1}$ is a Cauchy sequence in $L^1(\Omega; X)$. Therefore by \cite[Corollary~2.6.30]{HNVW1}, $(M^m)^{\tau-}_t \to M^{\tau-}_t$ in $L^1(\Omega; X)$ for each $t\geq 0$, hence for each $t\geq s\geq 0$ by \cite[Corollary 2.6.30]{HNVW1}
\begin{align*}
 \mathbb E(M^{\tau-}_t|\mathcal F_{s}) = \mathbb E\bigl(\lim_{m\to \infty}(M^m)^{\tau-}_t|\mathcal F_{s}\bigr) &= \lim_{m\to \infty}\mathbb E((M^m)^{\tau-}_t|\mathcal F_{s})\\
 &= \lim_{m\to\infty}(M^m)^{\tau-}_s = M^{\tau-}_s,
\end{align*}
where all the limits are again taken in $L^1(\Omega; X)$. Therefore $(M^{\tau-}_t)_{t\geq 0}$ is an \mbox{$L^1$-mar}\-tin\-gale.
\end{proof}

\subsection{Compensator and variation}

Let $X$ be a Banach space, $M:\mathbb R_+ \times \Omega \to X$ be an adapted c\`adl\`ag process. Then a predictable process $V:\mathbb R_+ \times \Omega$ is called a {\em predictable	compensator} of $M$ (or just a {\em compensator} of $M$) if $V_0=0$ a.s.\ and if $M-V$ is a local martingale.

\smallskip

The {\em variation} $\Var M:\mathbb R_+ \times \Omega \to \overline{\mathbb R}_+$ of a c\`adl\`ag process $M:\mathbb R_+ \times \Omega\to X$ is defined in the following way:
\begin{equation}\label{eq:defofVarofM}
  (\Var M)_t:=\|M_0\| + \limsup_{{\rm mesh}\to 0}\sum_{n=1}^N \|M(t_n)-M(t_{n-1})\|,
\end{equation}
where the limit superior is taken over all the partitions $0= t_0 < \ldots < t_N = t$.

\smallskip

Let $V:\mathbb R_+ \times \Omega\to X$ be a c\`adl\`ag adapted process. Analogously to the scalar-valued situation we can define a c\`adl\`ag adapted process $V^*:\mathbb R_+ \times \Omega \to \mathbb R_+$ of the following form
\[
 V^*_t := \sup_{s\in[0,t]}\|V_s\|,\;\;\; t\geq 0.
\]

\section{Gundy's decomposition of continuous-time martingales}\label{sec:Gundy'sdec}

For the proof of our main results, Theorem \ref{thm:candecoflocmartsufandnesofUMD} and Theorem \ref{thm:analougeofProp3.5.4}, we will need Gundy's decomposition of continuous-time martingales, which is a generalization of Gundy's decomposition of discrete martingales (see \cite{Gundy68} and \cite[Theorem~3.4.1]{HNVW1} for the details).

\begin{theorem}[Gundy's decomposition]\label{thm:Gundyconttime}
 Let $X$ be a Banach space, $M:\mathbb R_+ \times \Omega \to X$ be a martingale. Then for each $\lambda >0$ there exist martingales $M^1$, $M^2$ $M^3:\mathbb R_+ \times \Omega \to X$ such that $M = M^1 + M^2 + M^3$ and
 \begin{itemize}
  \item [(i)] $\|M^1_{t}\|_{L^{\infty}(\Omega; X)}\leq 2\lambda$, $\mathbb E\|M_t^1\|\leq 5 \mathbb E\|M_t\|$ for each $t\geq 0$,
  \item [(ii)] $\lambda \mathbb P ((M^2)^*_t>0)\leq 4\mathbb E\|M_t\|$ for each $t\geq 0$,
  \item [(iii)] $\mathbb E(\Var M^3)_t\leq 7 \mathbb E\|M_t\|$ for each $t\geq 0$.
 \end{itemize}
\end{theorem}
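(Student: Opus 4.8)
The plan is to imitate in continuous time the classical discrete construction of Gundy (\cite[Theorem 3.4.1]{HNVW1}): cut $M$ at the level $\lambda$ by means of a stopping time, collect the ``large'' jumps together with their predictable compensators into a martingale of integrable variation, and keep the bounded remainder. By a standard localization and limiting argument (stopping $M$ at deterministic times $n$, decomposing $M^n\in\mathcal M_X^1$, and letting $n\to\infty$) we may and do assume that $M$ is an $L^1$-bounded martingale; recall then that $t\mapsto\mathbb E\|M_t\|$ is non-decreasing and right-continuous.

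Define the cutoff
\[
 \tau:=\inf\{t\ge 0:\ \|M_t\|>\lambda\ \text{ or }\ \|M_{t-}\|>\lambda\},
\]
so that $\|M_{(\tau\wedge t)-}\|\le\lambda$ for all $t$, $\{\tau\le t\}\subseteq\{M^*_t\ge\lambda\}$, and hence by \eqref{eq:1,inftyineqforsubmart} with $p=1$ we have $\lambda\,\mathbb P(\tau\le t)\le\mathbb E\|M_t\|$, while optional stopping for the submartingale $\|M\|$ gives $\mathbb E\|M_{\tau\wedge t}\|\le\mathbb E\|M_t\|$. Put $M^2:=M-M^{\tau}$; this is a martingale vanishing on $[0,\tau]$, so $\{(M^2)^*_t>0\}\subseteq\{\tau<t\}\subseteq\{M^*_t\ge\lambda\}$ and (ii) follows. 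It remains to split $M^{\tau}=M^1+M^3$.

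For this, let $\mu$ be the jump measure of $M$ and $\nu$ its compensator, $\bar\mu=\mu-\nu$, and set
\[
 M^3_t:=\int_{[0,t]\times\{\|x\|>\lambda\}}\mathbf 1_{\{s\le\tau\}}\,x\,\bar\mu(\ud s,\ud x),\qquad M^1:=M^{\tau}-M^3 .
\]
Provided $\mathbb E\int_{[0,t]\times\{\|x\|>\lambda\}}\mathbf 1_{\{s\le\tau\}}\|x\|\,\mu(\ud s,\ud x)<\infty$, Lemma \ref{lem:intwrtrmidapdlocmart} shows that $M^3$ is a purely discontinuous martingale, and $(\Var M^3)_t$ is dominated by $\int\mathbf 1_{\{s\le\tau\}}\|x\|\,\ud\mu+\int\mathbf 1_{\{s\le\tau\}}\|x\|\,\ud\nu$ (over $[0,t]\times\{\|x\|>\lambda\}$), the two terms having equal expectation; since a large jump of $M$ strictly before $\tau$ has $\|\Delta M_s\|\le\|M_s\|+\|M_{s-}\|\le2\lambda$, while the unique large jump that can occur at $\tau$ contributes at most $\|M_{\tau\wedge t}\|+\lambda\mathbf 1_{\{\tau\le t\}}$, one combines this with the estimates of the previous paragraph to obtain (iii). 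As for $M^1=M^{\tau}-M^3$: for $t<\tau$ it equals $M_t$, hence $\|M^1_t\|\le\lambda$; for $t\ge\tau$ it equals $M_{\tau-}$ plus the compensator term $\int_{[0,\tau]\times\{\|x\|>\lambda\}}x\,\ud\nu$, and once the latter is forced to have norm $\le\lambda$ (see below) we get $\|M^1_t\|_{L^\infty(\Omega;X)}\le2\lambda$, whereupon $\mathbb E\|M^1_t\|\le\mathbb E\|M^{\tau}_t\|+\mathbb E\|M^3_t\|\lesssim\mathbb E\|M_t\|$ yields the remaining part of (i).

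The main obstacle I anticipate is exactly making the definitions of $M^3$ and $M^1$ honest: at a totally inaccessible time $\tau$ the compensator $\nu$ of the large jumps is spread over all of $[0,\tau]$, and its mass $A_t:=\int_{[0,t]\times\{\|x\|>\lambda\}}\|x\|\,\nu(\ud s,\ud x)$ can be much larger than $\lambda$, so the remainder $M^1$ need not be bounded. The remedy is to replace $\tau$ by $\tau\wedge\inf\{t:A_t\ge\lambda\}$, thereby stopping $M$ before the compensated large jumps accumulate too much mass, and to treat the jump of $M$ at this new stopping time separately according to whether it is predictable or totally inaccessible (here Lemma \ref{lem:DeltaV_tau=0forpredictVandtotinactau} is relevant); controlling simultaneously the three sources of growth — the size of $M$, the size of the jump collected at the stopping time, and the mass of the compensator of the collected jumps — is what genuinely forces a three-term decomposition, and the explicit constants $2,5,4,7$ then drop out of \eqref{eq:1,inftyineqforsubmart}, optional stopping and the identity $\mathbb E\int W\,\ud\mu=\mathbb E\int W\,\ud\nu$. (An alternative route is to discretize $M$ along refining partitions, apply the discrete vector-valued Gundy decomposition to each discretization, regard the pieces as continuous-time martingales via conditional expectations, and pass to the limit using Proposition \ref{prop:Xvaluedcadlagareclosedunderucp}, Fatou's lemma and lower semicontinuity of $V\mapsto(\Var V)_t$ under uniform convergence; there the difficulty is to obtain convergence, rather than mere boundedness, of the approximating decompositions.)
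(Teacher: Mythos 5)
Your overall architecture (cut at a first-passage time, compensate the large jumps, keep the bounded remainder, and push everything supported on a small event into $M^2$) is the right one, and your treatment of (ii) via $\{(M^2)^*_t>0\}\subseteq\{\tau\le t\}$ matches the paper. But there is a genuine gap in your definition of $M^3$: you compensate \emph{all} jumps of norm $>\lambda$ occurring up to $\tau$, and for that choice the bound $\mathbb E(\Var M^3)_t\lesssim\mathbb E\|M_t\|$ is false. A martingale can stay inside the ball of radius $\lambda$ (so your $\tau$ never fires) while making arbitrarily many jumps of norm $>\lambda$ in expectation. For example, in $X=\ell^\infty_N$ take $M_0=-\tfrac34\lambda(1,\dots,1)$ and at step $k$ add $\tfrac32\lambda e_k$ with probability $\tfrac17$ and $-\tfrac14\lambda e_k$ with probability $\tfrac67$, conditionally independently: then $\|M_n\|_\infty\le\lambda$ for all $n$, hence $\mathbb E\|M_N\|\le\lambda$ and $\tau=\infty$, yet the expected number of jumps of norm $\tfrac32\lambda>\lambda$ is $N/7$, so $\mathbb E\int_{[0,N]\times\{\|x\|>\lambda\}}\|x\|\ud\mu=\tfrac{3N\lambda}{14}$ and a direct computation gives $\mathbb E(\Var M^3)_N=\tfrac{18N\lambda}{49}$, with $N$ arbitrary. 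The same quantity controls $\mathbb P(A_t\ge\lambda)$ via Chebyshev, so your proposed remedy of stopping when the compensator mass $A$ reaches $\lambda$ does not repair this: the event $\{A_t\ge\lambda\}$ is not small in the sense required for (ii). It also destroys (i), since $M^1=M^\tau-M^3$ is then unbounded; note moreover that for $t<\tau$ one has $M^1_t=M_t-M^3_t\ne M_t$ as soon as a large jump (or compensator mass) has accumulated, contrary to what you assert.

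The missing idea is that only the \emph{single} jump at the crossing time should be compensated and moved into $M^3$. With the threshold $\lambda/2$, i.e.\ $\tau=\inf\{t\ge0:\|M_t\|\ge\lambda/2\}$, one has $\|M_{\tau-}\|\le\lambda/2\le\|M_\tau\|$ on $\{\tau<\infty\}$, whence $\mathbb E\|\Delta M_\tau\|\le2\mathbb E\|M^\tau_t\|\le2\mathbb E\|M_t\|$; the single-jump process $N_t=\Delta M_\tau\mathbf 1_{[0,t]}(\tau)$ therefore has integrable variation and a compensator $V$ with $\mathbb E(\Var V)_t\le\mathbb E\|N_t\|$ by Lemma \ref{lem:goodcompensator}, and $M^3=M_0^{\tau-}+N-V$ satisfies (iii). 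The difficulty you correctly identified --- that $V$ need not be bounded by $\lambda$ --- is resolved in the paper not by shrinking $\tau$ but by introducing the \emph{predictable} stopping time $\sigma=\inf\{t\ge0:\|V_t\|\ge\lambda\}$ (predictable because $V$ is predictable and right-continuous), setting $M^1=M^{\sigma-\wedge\tau-}+V^{\sigma-}-M_0^{\tau-}$ (so $\|M^1\|\le2\lambda$), and sending the increment of $M^{\tau-}+V$ after $\sigma-$ into $M^2$, where $\mathbb P(\sigma\le t)\le\mathbb E\|V_t\|/\lambda\le2\mathbb E\|M_t\|/\lambda$; Lemma \ref{lem:predtaupreservemartingales} is what guarantees that stopping strictly before a predictable time preserves the martingale property. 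Your alternative discretization route is only sketched and, as you yourself note, the convergence of the approximating decompositions is exactly the hard point, so it cannot be counted as a proof either.
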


\begin{remark}
 Notice that if $M$ is a discrete martingale (i.e.\ $M_t = M_{[t]}$ for any $t\geq 0$), then the decomposition in Theorem \ref{thm:Gundyconttime} turns to the classical discrete one from \cite[Theorem~3.4.1]{HNVW1}.
\end{remark}

For the proof we will need the following intermediate steps.

\begin{lemma}\label{lem:goodcompensator}
 Let $X$ be a Banach space, $M:\mathbb R_+ \times \Omega \to X$ be a c\`adl\`ag adapted process such that $\mathbb E (\Var M)_t <\infty$ for each $t\geq 0$ and a.s.
 \[
  M_t = \sum_{0\leq s\leq t} \Delta M_s,\;\;\; t\geq 0.
 \]
Then $M$ has a c\`adl\`ag predictable compensator $V:\mathbb R_+ \times \Omega \to X$ such that for each $t\geq 0$
 \begin{equation}\label{eq:lemgoodcompensator1}
  \mathbb E \|V_t\|\leq \mathbb E (\Var V)_t \leq\mathbb E (\Var M)_t.
 \end{equation}
 In particular, if $M$ has a.s.\ at most one jump, then
 \begin{equation}\label{eq:lemgoodcompensator2}
  \mathbb E \|V_t\|\leq \mathbb E (\Var V)_t \leq \mathbb E (\Var M)_t = \mathbb E \|M_t\|.
 \end{equation}
\end{lemma}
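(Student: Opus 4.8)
The plan is to construct the compensator $V$ componentwise in the scalar sense and then assemble it, using the hypothesis that $M$ is a pure-jump process of integrable variation. First I would observe that since $\mathbb{E}(\Var M)_t < \infty$, for every $x^* \in X^*$ with $\|x^*\| \le 1$ the scalar process $\langle M, x^*\rangle$ is a càdlàg adapted process of integrable variation which is itself pure-jump (its jumps are $\langle \Delta M_s, x^*\rangle$), so by the classical scalar theory (e.g. \cite[Chapter II.1]{JS} or the Doob--Meyer machinery for processes of integrable variation) it has a predictable compensator. The real work is to produce a single $X$-valued predictable càdlàg process $V$ that compensates $M$ itself. The natural route is to exploit the jump structure: encode the jumps of $M$ as an integer-valued optional random measure $\mu$ on $\mathbb{R}_+ \times X$ (with the jump $\Delta M_s$ recorded at the point $(s, \Delta M_s)$ whenever it is nonzero), so that $M_t = \int_{[0,t]\times X} x\, \mu(\cdot; \dd s, \dd x)$. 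The condition $\mathbb{E}(\Var M)_t < \infty$ says exactly that $\mathbb{E}\int_{[0,t]\times X}\|x\|\,\mu(\dd s,\dd x) < \infty$, so $\mu$ is $\widetilde{\mathcal P}$-$\sigma$-finite and has a compensator $\nu$ by \cite[Theorem II.1.8]{JS}, and $F(s,\omega,x) = x$ is $\widetilde{\mathcal P}$-strongly-measurable with $\mathbb{E}\int \|F\|\,\dd\mu < \infty$.

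Then I would set $V_t := \int_{[0,t]\times X} x\, \nu(\cdot; \dd s, \dd x)$. By the definition of the compensator, $\int_{[0,t]\times X} x\, \dd\bar\mu = M_t - V_t$, and by Lemma~\ref{lem:intwrtrmidapdlocmart} this is a purely discontinuous $X$-valued martingale; since $\nu$ is predictable, $V$ is predictable, and $V_0 = 0$ because $\mu(\{0\}\times X) $ contributes nothing (one can arrange $M_0 = 0$, or absorb $M_0$ separately since $M_0$ is $\mathcal F_0$-measurable and hence its own compensator issue is trivial — actually I should be slightly careful here and handle $t=0$, but $V_0 = \int_{\{0\}\times X} x\,\nu \,(\dd s,\dd x)$ which vanishes as $\nu$ has no atom at $0$ for an $L^1$-integrable jump process; if one prefers, work with $M - M_0$). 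So $V$ is a predictable càdlàg compensator of $M$.

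For the variation estimate \eqref{eq:lemgoodcompensator1}, I would argue that $(\Var V)_t \le \int_{[0,t]\times X}\|x\|\,\nu(\dd s,\dd x)$ (a jump of $V$ at time $s$ is $\int_{\{s\}\times X} x\,\nu(\dd s,\dd x)$, whose norm is at most $\int_{\{s\}\times X}\|x\|\,\nu$; summing over $s$ and noting $V$ is pure-jump as well gives the bound), and then take expectations and use the defining property of the compensator with the nonnegative $\widetilde{\mathcal P}$-measurable function $W(s,\omega,x) = \|x\|$ to get $\mathbb{E}\int_{[0,t]\times X}\|x\|\,\dd\nu = \mathbb{E}\int_{[0,t]\times X}\|x\|\,\dd\mu = \mathbb{E}(\Var M)_t - \mathbb{E}\|M_0\|\le \mathbb{E}(\Var M)_t$. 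Combined with $\|V_t\| \le (\Var V)_t$, this is \eqref{eq:lemgoodcompensator1}. For the last assertion, if $M$ has a.s. at most one jump then $(\Var M)_t = \|M_0\| + \|\Delta M_{\sigma}\|\mathbf 1_{\sigma \le t}$ for the (random) jump time $\sigma$; assuming $M_0 = 0$ this equals $\|M_t\|$ pathwise since $M_t = \Delta M_\sigma \mathbf 1_{\sigma \le t}$, giving \eqref{eq:lemgoodcompensator2}.

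The main obstacle I anticipate is the verification that $V$ as defined is genuinely càdlàg and predictable, and more subtly that $M - V$ is a \emph{local martingale} (or martingale) rather than merely having the right compensation property formally — this is exactly where Lemma~\ref{lem:intwrtrmidapdlocmart} is needed, so the essential point is to check its hypotheses carefully, in particular that $F(s,\omega,x) = x$ is $\widetilde{\mathcal P}$-strongly measurable as a map into $X$. A secondary technical nuisance is the treatment of $M_0$: the random measure machinery naturally compensates the part of $M$ that jumps on $(0,\infty)$, so one should either normalize $M_0 = 0$ at the outset (legitimate since $M_0$ is $\mathcal F_0$-measurable and can be subtracted) or check that the atom of $\mu$ at time $0$ is handled consistently; I would state the reduction to $M_0 = 0$ explicitly. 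Apart from these measurability/bookkeeping points, the argument is a direct transcription of the scalar compensation theory through the vector-valued random-measure integral already set up in Subsection~\ref{subsec:randommeasures}.
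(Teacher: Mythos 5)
Your proposal is correct and follows essentially the same route as the paper: encode the jumps of $M$ in the integer-valued optional random measure $\mu^M$, take its predictable compensator $\nu^M$ via \cite[Theorem II.1.8]{JS}, set $V_t=\int_{[0,t]\times X}x\ud\nu^M$, and deduce \eqref{eq:lemgoodcompensator1} from $\mathbb E\int\|x\|\ud\nu^M=\mathbb E\int\|x\|\ud\mu^M=\mathbb E(\Var M)_t$. The points you flag (strong measurability of $F(s,\omega,x)=x$, the martingale property of $M-V$ via Lemma~\ref{lem:intwrtrmidapdlocmart}, and the treatment of the atom at $t=0$) are exactly the details the paper handles implicitly, so no gap remains.
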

\begin{proof}
 Let $\mu^M$ be a random measure defined on $\mathbb R_+\times X$ pointwise in $\omega\in \Omega$ in the following way:
 \begin{equation}\label{eq:defofranmeasuremu^M}
    \mu^M(\omega; B\times A) := \sum_{u\in B} \mathbf 1_{A\setminus\{0\}}(\Delta M_u(\omega)),\;\;\; \omega\in \Omega, B\in \mathcal B(\mathbb R_+), A \in \mathcal B(X).
 \end{equation}

Notice that $(\Var M)_t = \sum_{0\leq s\leq t} \|\Delta M_s\|$ a.s.\ for each $t\geq 0$, so in particular a.s.\
\begin{equation}\label{eq:VarMequalsintwrtranmeas}
 (\Var M)_t = \int_{[0,t]\times X}\|x\|\ud \mu^M(x,s),\;\;\;t\geq 0.
\end{equation}

Also note that $\mu^M$ is $\mathcal P$-$\sigma$-finite: for each $0\leq u\leq v$ and $t\geq 0$ one has that
\begin{align*}
  \mathbb E \int_{[0,t]\times X} \mathbf 1_{\|x\|\in [u,v]}\ud \mu^M &\eqsim_{u,v}\mathbb E \int_{[0,t]\times X} \|x\|\mathbf 1_{\|x\|\in [u,v]}\ud \mu^M\\
  &\leq \mathbb E \int_{[0,t]\times X} \|x\|\ud \mu^M \\
  &= \mathbb E(\Var M)_t <\infty.
\end{align*}

Since $\mu^M$ is an integer-valued optional $\mathcal P$-$\sigma$-finite random measure, it has a predictable compensator $\nu^M$ (see Subsection \ref{subsec:randommeasures} and \cite[Theorem II.1.8]{JS}), and therefore since by \eqref{eq:VarMequalsintwrtranmeas}
\[
 \mathbb E \int_{[0,t]\times X}\|x\|\ud \mu^M(x,s) =\mathbb E (\Var M)_t < \infty,
\]
we have that
 $$
 t\mapsto V_t := \int_{[0,t]\times X}x \ud \nu^M(x,s), \;\;\; t\geq 0,
 $$ 
 is integrable and c\`adl\`ag in time due to the fact that it is an integral with respect to the measure $\nu^M$ a.s. Moreover, by the definition of variation \eqref{eq:defofVarofM} we have that $\|V_t\|\leq (\Var V)_t$ a.s.\ for each $t\geq 0$, and hence
 \begin{align*}
  \mathbb E \|V_t\| \leq \mathbb E (\Var V)_t  \leq \mathbb E \int_{[0,t]\times X}\|x\| \ud \nu^M(x,s)&\stackrel{(*)}=\mathbb E \int_{[0,t]\times X}\|x\| \ud \mu^M(x,s)\\
  &\stackrel{(**)} = \mathbb E (\Var M)_t,
 \end{align*}
 where $(*)$ holds due to the definition of a compensator, and $(**)$ follows from \eqref{eq:VarMequalsintwrtranmeas}. To show \eqref{eq:lemgoodcompensator2} it is sufficient to notice that if $M$ has at most one jump then $(\Var M)_t =  \|M_t\|$ a.s.\ for each $t\geq 0$.
\end{proof}

The following lemma is folklore, but the author could not find an appropriate reference, so we present it with the proof here.

\begin{lemma}\label{lemma:locbddnessofpredicproc}
 Let $X$ be a Banach space, $V:\mathbb R_+ \times \Omega \to X$ be a right-continuous predictable process, $V_0=0$ a.s. Then $V$ is locally bounded.
\end{lemma}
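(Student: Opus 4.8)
The plan is to show that a right-continuous predictable process $V$ with $V_0 = 0$ a.s.\ is locally bounded, i.e.\ that there exists a sequence of stopping times $(\sigma_m)_{m\geq 1}$ with $\sigma_m \nearrow \infty$ a.s.\ such that each stopped process $V^{\sigma_m}$ is bounded (say, by a deterministic constant, or at least uniformly bounded). The natural candidate for the localizing sequence is
\[
 \sigma_m := \inf\{t\geq 0: \|V_t\| > m\} \wedge m, \;\;\; m\geq 1.
\]
First I would check that each $\sigma_m$ is a \emph{predictable} stopping time: since $V$ is predictable and right-continuous, the set $\{(t,\omega): \|V_t(\omega)\| > m\}$ is a predictable set, and the d\'ebut of a predictable set whose complement is... — more cleanly, the first entrance time of a right-continuous adapted (indeed predictable) process into the open set $\{\|x\|>m\}$ is a stopping time, and one can verify it is in fact predictable because $V$ is predictable (this is a standard fact; see e.g.\ \cite[Section I.2]{JS} or \cite{Kal}). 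The key point, however, is not predictability per se but the bound on the stopped process.

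The main subtlety is that at the stopping time $\sigma_m$ itself the process may already exceed level $m$, because $V$ is only right-continuous, not continuous, and in general $\|V_{\sigma_m}\|$ can be arbitrarily large (the jump at $\sigma_m$ is not controlled). So $V^{\sigma_m}$ need not be bounded by $m$. This is the main obstacle, and the standard way around it is exactly why one needs \emph{predictability}: because $\sigma_m$ is predictable, it can be announced by a sequence $(\sigma_{m,k})_{k\geq 1}$ with $\sigma_{m,k} < \sigma_m$ a.s.\ on $\{\sigma_m > 0\}$ and $\sigma_{m,k} \nearrow \sigma_m$. On the stochastic interval $[0,\sigma_{m,k}]$ we have $\sigma_{m,k} < \sigma_m$, hence $\|V_t\| \leq m$ for all $t \leq \sigma_{m,k}$ (using $V_0 = 0$ so that the set $\{\sigma_m = 0\}$ causes no problem), and therefore $\|V^{\sigma_{m,k}}\| \leq m$ pointwise. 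Thus the process $V^{\sigma_{m,k}}$ is bounded by $m$.

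It then remains to extract a single sequence of stopping times from the doubly-indexed family $\{\sigma_{m,k}\}_{m,k\geq 1}$ that increases to $\infty$ a.s. Since $\sigma_m \geq m \wedge \inf\{t: \|V_t\|>m\}$ and $V$ has c\`adl\`ag (in particular locally bounded) paths for a.e.\ $\omega$, we have $\sigma_m \nearrow \infty$ a.s.; and for each fixed $m$, $\sigma_{m,k} \nearrow \sigma_m$ as $k\to\infty$. A diagonal argument — for instance, setting $\rho_m := \sigma_{m, k(m)}$ with $k(m)$ chosen large enough that $\mathbb{P}(\sigma_{m,k(m)} < \sigma_m \wedge m \wedge (\text{something}))$ is small, then taking running maxima $\widetilde\rho_m := \max_{j\leq m}\rho_j$ — produces an increasing sequence of stopping times tending to $\infty$ a.s., each of which, being dominated by some $\sigma_{m,k}$, makes $V$ stopped at it bounded. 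Concretely, one can avoid probabilistic bookkeeping: note that for fixed $\omega$ outside a null set, the paths of $V$ are bounded on each $[0,m]$ by some finite $C_m(\omega)$; choosing the announcing sequences gives, for each $m$, $\sup_{t\leq \sigma_{m,k}}\|V_t\| \leq m$, and since $\sup_m \sigma_{m,k(m)} = \infty$ a.s.\ for suitable $k(m)$, this exhibits $V$ as locally bounded. I would present the diagonalization succinctly, since it is routine once the announcing-sequence idea is in place; the conceptual content is entirely the use of predictability of $\sigma_m$ to retreat strictly before the first exit time and thereby kill the uncontrolled boundary jump.
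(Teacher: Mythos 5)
Your proposal is correct and follows essentially the same route as the paper: define the first-entrance times of $\|V\|$ past level $m$, observe that they are predictable because $V$ is predictable, announce them so as to retreat strictly before the entrance time (which kills the uncontrolled jump at the boundary and yields the bound by $m$), and then diagonalize over the doubly-indexed announcing family to obtain a single localizing sequence tending to infinity a.s. The paper's proof makes the same diagonal selection concrete via the choice $\mathbb P(\tau_n-\tau_{m_n,n}>2^{-n})<2^{-n}$, exactly as you sketch.
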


\begin{proof}
 For each $n\geq 0$ define a stopping time $\tau_n:= \inf\{t\geq 0: \|V_t\|\geq n\}$. Then a sequence $(\tau_n)_{n\geq 1}$ of stopping times is increasing a.s.\ and tends to infinity as $n\to \infty$. Moreover, $(\tau_n)_{n\geq 1}$ are predictable by \cite[Theorem 25.14]{Kal} and the fact that for each $n\geq 1$
 \begin{equation}\label{eq:tauispredictifVispredict}
  \{\tau\leq t\} = \{ \sup_{0\leq s\leq t}\|V_s\|\geq n \}\in \mathcal P.
 \end{equation}
Therefore for each $n\geq 1$ there exists an announcing sequence $(\tau_{m,n})_{m\geq 1}$ of stopping times. Choose $m_n$ so that $\mathbb P(\tau_n - \tau_{m_n,n}>\frac{1}{2^n})<\frac{1}{2^n}$. Then $(\tau_{m_n,n})_{n\geq 1}$ is such that $\tau_{m_n,n}\to \infty$ a.s.\ as $n\to \infty$, and for each $n\geq 0$ we have that a.s.\ $\sup_{0\leq s\leq \tau_{m_n,n}}\|V_s\| \leq \sup_{0\leq s< \tau_{n}}\|V_s\| \leq n$.
\end{proof}

Let $\tau$ and $\sigma$ be stopping times. Then we can set
\begin{equation}\label{eq:tau-wedgesigma-:=(tauwedgesigma)-}
 \tau - \wedge \sigma- := (\tau \wedge \sigma)-.
\end{equation}
Notice that if $M:\mathbb R_+\times \Omega \to X$ is a c\`adl\`ag process, then $(M^{\tau-})^{\sigma-} = M^{\tau- \wedge \sigma-}$.

\begin{proof}[Proof of Theorem \ref{thm:Gundyconttime}]
 By a stopping time argument we can assume that $M$ is an \mbox{$L^1$-mar}\-tin\-gale. Define a stopping time $\tau$ is the following way: 
 $$
 \tau=\inf\Bigl\{t\geq 0: \|M_t\|\geq \frac{\lambda}{2}\Bigr\}.
 $$
 Let $M^{2,1}: = M-M^{\tau}$ and let $M^{3,1}(\cdot) = \Delta M_{\tau}\mathbf 1_{[0,\cdot]}(\tau) + M_0^{\tau-}$, where by \eqref{eq:defofM^tau-ingen} we can conclude that a.s.\
\begin{equation}\label{eq:defofM_0^tau-}
  M_0^{\tau-} :=
\begin{cases}
 M_0,\;\;\; &\tau>0,\\
 0,\;\;\;&\tau=0.
\end{cases}
\end{equation}

Let $N:\mathbb R_+ \times \Omega \to X$ be such that $N_t = \Delta M_{\tau}\mathbf 1_{[0,t]}(\tau)$, $t\geq 0$. Then due to the fact that $M_{\tau}=\mathbb E (M_{\infty}|\mathcal F_{\tau})$ by \cite[Theorem 7.29]{Kal}, \cite[Corollary 2.6.30]{HNVW1}, and the fact that $\|M_{\tau-}\|\leq \frac{\lambda}{2}$ a.s., we get
\begin{equation}\label{eq:ineqforvarN}
 \begin{split}
  \mathbb E (\Var N)_{\infty}=\mathbb E \|\Delta M_{\tau}\| = \mathbb E \|M_{\tau} - M_{\tau-}\|&\leq \mathbb E \|M_{\tau}\| + \mathbb E (\|M_{\tau-}\|\mathbf 1_{\tau<\infty})\\
 &\leq \mathbb E \|M_{\infty}\| + \frac{\lambda}{2} <\infty.
 \end{split}
\end{equation}
Therefore by Lemma \ref{lem:goodcompensator}, $N$ has a compensator $V$. Let 
$$
\sigma:= \inf\{t\geq 0:\|V_t\|\geq \lambda\}
$$ 
be a stopping time. Then by \eqref{eq:tauispredictifVispredict} $\sigma$ is a predictable stopping time. Define now $M^1 = M^{\sigma-\wedge\tau-} + V^{\sigma-} - M_0^{\tau-}$, $M^{2,2} = (M^{\tau-} + V)-(M^{\sigma-\wedge\tau-} + V^{\sigma-})$, $M^{3,2} = N-V$ where $\sigma- \wedge \tau-$ is defined as in \eqref{eq:tau-wedgesigma-:=(tauwedgesigma)-}. Define $M^2 := M^{2,1} + M^{2,2}$ and $M^3 := M^{3,1} + M^{3,2}$. Then $M = M^1 + M^2 + M^3$. Now let us describe why this is the right choice.

{\em Step 1: $M^1$.} First show that $M^1$ is a martingale. Indeed, for each $t\geq 0$
\begin{equation}\label{eq:reprofM^1}
 \begin{split}
   M^1_t &= M^{\sigma-\wedge \tau-}_t + V^{\sigma-}_t - M_0^{\tau-} = (M^{\tau-}_t + V_t-M_0^{\tau-})^{\sigma-}\\
 &=\bigl( M^{\tau}_t - \mathbf 1_{\tau\in [0,t]} \Delta M_{\tau} + V_t -M_0^{\tau_2-}\bigr)^{\sigma-}\\
 &= \Bigl( (M^{\tau}_t-M_0^{\tau-}) - (N_t-V_t) \Bigr)^{\sigma-},
 \end{split}
\end{equation}
and the last expression is a martingale due to the fact that $M^{\tau}$ is a martingale by \cite[Theorem 7.12]{Kal}, 
the fact that $N-V$ is a martingale by the definition of a~compensator, Lemma \ref{lem:predtaupreservemartingales}, and the fact that by \eqref{eq:ineqforvarN}
$$
\mathbb E \|N_{\infty}\| \leq \mathbb E (\Var N)_{\infty}\leq \mathbb E \|M_{\infty}\| +\frac{\lambda}2 <\infty.
$$

Now let us check (i): $\|M_{\infty}^{\sigma-\wedge\tau-}\|$, $\|M_0^{\tau-}\|\leq \frac {\lambda}{2}$ a.s.\ by the definition of $\tau$, and $\|V_{\infty}^{\sigma-}\|\leq \lambda$ by the definition of $\sigma$, so $\|M^1_{\infty}\|\leq 2\lambda$ a.s.

Further, to prove the second part of (i) we will use the representation of $M^1$ from the last line of \eqref{eq:reprofM^1}. Notice that by \cite[Theorem 7.12]{Kal} and \cite[Corollary 2.6.30]{HNVW1} for each fixed $t\geq 0$
\begin{equation}\label{eq:someofpredictableguysM^tau_1}
 \mathbb E \|M^{\tau}_t\| \leq \mathbb E \|M_t\|.
\end{equation}
Moreover,
\begin{multline*}
  \mathbb E \|N_t\| = \mathbb E \|M^{\tau}_t - M^{\tau-}_t\|\leq \mathbb E \|M^{\tau}_t\| +\mathbb E(\| M^{\tau-}_t\|\mathbf 1_{\tau<\infty})\\
  \leq \mathbb E \|M_t^{\tau}\| + \mathbb E\Bigl(\frac{\lambda}{2}\mathbf 1_{\tau <\infty} \Bigr) \leq 2\mathbb E \|M_t^{\tau}\| \stackrel{(*)}\leq 2\mathbb E\|M_t\|,
\end{multline*}
where $\| M^{\tau-}_t\|\leq \frac{\lambda}{2} \leq \| M^{\tau}_t\|$ on $\{\tau <\infty\}$ by the definition of $\tau$, and $(*)$ follows from \cite[Theorem 7.12]{Kal} and \cite[Corollary 2.6.30]{HNVW1}. Therefore by \eqref{eq:lemgoodcompensator2}
\begin{equation}\label{eq:estforV_tby2M_t}
 \mathbb E \|V_t\|\leq \mathbb E \|N_t\|\leq 2\mathbb E\|M_t\|
\end{equation}
as well. Finally, $\mathbb E \|M_0^{\tau-}\| \leq \mathbb E \|M_0\|\leq \mathbb E \|M_t\|$ by \eqref{eq:defofM_0^tau-} and \cite[Corollary 2.6.30]{HNVW1}. Consequently, the second part of (i) holds by the estimates above and by the triangle inequality.

{\em Step 2: $M^2$.} First note that
\begin{equation}\label{eq:formulaforM^2}
  M^2 = M- M^{\tau} + (M^{\tau-} + V)-(M^{\tau-} + V)^{\sigma-}.
\end{equation}
Let us check that $M^2$ is a martingale. $M-M^{\tau}$ is a martingale by \cite[Theorem~7.12]{Kal}. Furthermore,
\[
 M^{\tau-} + V = M^{\tau}  -(N-V)
\]
is a martingale as well due to \cite[Theorem 7.12]{Kal} and the fact that $V$ is a compensator of $N$. Finally, $(M^{\tau-} + V)^{\sigma-}$ is a martingale by Lemma~\ref{lem:predtaupreservemartingales}.

Let us now prove (ii). Notice that by \eqref{eq:formulaforM^2}
\[
 \mathbb P((M^2)^*_t>0) \leq \mathbb P((M- M^{\tau})^*_t>0) + \mathbb P(((M^{\tau-} + V)-(M^{\tau-} + V)^{\sigma-})^*_t>0).
\]
First estimate $\mathbb P((M- M^{\tau})^*_t>0)$:
\begin{align*}
 \mathbb P((M- M^{\tau})^*_t>0) \leq \mathbb P(\tau\leq t) \leq \mathbb P\Bigl(M^*_t\geq \frac{\lambda} 2\Bigr)\leq \frac{2\mathbb E\|M_t\|}{\lambda},
\end{align*}
where the latter inequality holds by \eqref{eq:1,inftyineqforsubmart}. Using the same machinery we get
\begin{multline*}
 \mathbb P(((M^{\tau-} + V)-(M^{\tau-} + V)^{\sigma-})^*_t>0) \leq \mathbb P(\sigma \leq t) \\
 = \mathbb P(\|V_t\|\geq \lambda) \stackrel{(i)}\leq \frac{\mathbb E\|V_t\|}{\lambda} \stackrel{(ii)}\leq \frac{2\mathbb E\|M_t\|}{\lambda},
\end{multline*}
where $(i)$ follows from the Chebyshev inequality, and $(ii)$ follows from \eqref{eq:estforV_tby2M_t}. This terminates the proof of (ii).

{\em Step 3: $M^3$.} Recall that
\[
 M^3 =  M_0^{\tau-} + N-V.
\]
Therefore by the triangle inequality a.s.\ for each $t\geq 0$
\begin{equation}\label{eq:estforVarofM^3}
 \begin{split}
  \mathbb E (\Var M^3)_t &\leq \mathbb E\|M_0^{\tau-}\| + \mathbb E(\Var N)_t + \mathbb E(\Var V)_t\\
  &\leq \mathbb E \|M_t\| + 2\mathbb E \|N_t\| \leq 5\mathbb E\|M_t\|,
 \end{split}
\end{equation}
where the latter inequality holds by \eqref{eq:estforV_tby2M_t}, while the rest follows from 
\eqref{eq:lemgoodcompensator1}
and the fact that $\mathbb E\|M_0^{\tau-}\| \leq \mathbb E\|M_0\| \leq \mathbb E\|M_t\|$.
\end{proof}

\begin{remark}\label{rem:localLpintegrabilityGundysdec}
 Let $p\in (1,\infty)$, $M$ be an $L^p$-martingale, $\lambda >0$, $M=M^1 + M^2 + M^3$ be Gundy's decomposition (see the theorem above). Then $M^1$ is an $L^p$ martingale since $\|M^1_{t}\|_{L^{\infty}(\Omega; X)}\leq 2\lambda$ for all $t\geq 0$; $M^3$ is a local $L^p$-martingale since $M^3 = M_0^{\tau-} + N-V$, where both $M_0^{\tau-}$ and $N_{\infty} = \Delta M_{\tau}$ are $L^p$-integrable (the latter is $L^p$-integrable by the argument similar to \eqref{eq:ineqforvarN}), and $V$ is locally $L^p$-integrable by Lemma \ref{lemma:locbddnessofpredicproc}; finally, $M^2$ is a local $L^p$-martingale since $M^2 = M - M^1 - M^3$. Therefore all the martingales in Gundy's decomposition are locally $L^p$-integrable given $M$ is an $L^p$-martingale.
\end{remark}

\section{The canonical decomposition of local martingales}\label{sec:candecoflocalmartUMD}

The current section is devoted to the proof of the fact that the canonical decomposition (as well as the Meyer-Yoeurp and the Yoeurp decompositions) of any $X$-valued local martingale exists if and only if $X$ has the UMD Banach property. Recall that the Meyer-Yoeurp decomposition split a local martingale $M$ into a sum $M=M^c + M^d$ of a continuous local martingale $M^c$ and a purely discontinuous local martingale $M^d$, while the Yoeurp decomposition split a purely discontinuous local martingale $M^d$ into a sum $M^d=M^q + M^a$ of a quasi-left continuous local martingale $M^q$ and a local martingale $M^a$ with accessible jumps.

First we give all the basic definitions properly, and thereafter we provide the reader with the proof of the main statement, Theorem \ref{thm:candecoflocmartsufandnesofUMD}.

\subsection{Basic definitions and decompositions of $L^p$-martingales}\label{subsec:decompositionsofmartingales}

Let $X$ be a~Banach space. Recall that a purely discontinuous local martingale have been defined in Subsection \ref{subsec:pdmart}.
\begin{definition}\label{def:MYdecoflocmart}
 A local martingale $M:\mathbb R_+ \times \Omega \to X$ is called to have the {\em Meyer-Yoeurp decomposition} if there exist local martingales $M^c, M^d:\mathbb R_+\times \Omega \to X$ such that $M^c$ is continuous, $M^d$ is purely discontinuous, $M^c_0=0$, and $M = M^c + M^d$.
\end{definition}

\begin{remark}\label{rem:MY==weakMY}
 Recall that by \cite{Y17MartDec} if $M = M^c + M^d$ is the Meyer-Yoeurp decomposition, then $\langle M^c, x^*\rangle$ is continuous and $\langle M^d, x^*\rangle$ is purely discontinuous for any $x^* \in X^*$; therefore this decomposition is unique by the uniqueness of the Meyer-Yoeurp decomposition of a real-valued local martingale (see \cite[Theorem 26.14]{Kal} and \cite{Y17MartDec} for details).
\end{remark}

Let $M:\mathbb R_+ \times \Omega \to X$ be a local martingale. Then $M$ is called {\em quasi-left continuous} if $\Delta M_{\tau}=0$ a.s.\ for any predictable stopping time $\tau$, and $M$ is called {\em with accessible jumps} if $\Delta M_{\tau}=0$ a.s.\ for any totally inaccessible stopping time~$\tau$ (see Subsection \ref{subsec:prandtotinacsttimes} for the definition of a predictable and a totally inaccessible stopping times).

\begin{definition}\label{def:Ydecofpdlocmart}
 A purely discontinuous local martingale $M^d:\mathbb R_+ \times \Omega \to X$ is called to have the {\em Yoeurp decomposition} if there exist purely discontinuous local martingales $M^q, M^a:\mathbb R_+\times \Omega \to X$ such that $M^q$ is quasi-left continuous, $M^a$ has accessible jumps, $M^q_0=0$, and $M^d = M^q + M^a$.
\end{definition}

\begin{remark}\label{rem:Y==weakY}
Analogously to Remark \ref{rem:MY==weakMY} it follows from \cite[Corollary 26.16]{Kal} that the Yoeurp decomposition is unique.
\end{remark}

Composing Definition \ref{def:MYdecoflocmart} and \ref{def:Ydecofpdlocmart} we get the canonical decomposition.

\begin{definition}
 A local martingale $M:\mathbb R_+ \times \Omega \to X$ is called to have the {\em canonical decomposition} if there exist local martingales $M^c,M^q, M^a:\mathbb R_+\times \Omega \to X$ such that $M^c$ is continuous, $M^q$ and $M^a$ are purely discontinuous, $M^q$ is quasi-left continuous, $M^a$ has accessible jumps, $M^c_0=M^q_0=0$, and $M = M^c +  M^q + M^a$.
\end{definition}

\begin{remark}\label{rem:decompsitionspreserveDeltasRvaluedcase}
 Notice that if $M = M^c+ M^q + M^a$ is the canonical decomposition, then $\Delta M^q_{\tau} = \Delta M_{\tau}$ for any totally inaccessible stopping time $\tau$ since in this case $\Delta M^c_{\tau} = \Delta M^a_{\tau} = 0$ by the definition of a continuous local martingale and a local martingale with accessible jumps. Analogously, $\Delta M^a_{\tau} = \Delta M_{\tau}$ for any predictable stopping time $\tau$.
\end{remark}

The reader can find further details on the martingale decomposition discussed above in \cite{Kal,JS,Y17MartDec,DY17,Yoe76,Mey76}.

\smallskip

Due to \cite{Y17MartDec} the UMD property guarantees the canonical decomposition of any $X$-valued $L^p$-martingale with $p\in (1,\infty)$ and the following proposition holds:
\begin{proposition}\label{prop:candecofUMDspacevalL^pmart}
 Let $X$ be a UMD Banach space, $p\in (1,\infty)$. Then any \mbox{$L^p$-mar}\-tin\-gale $M:\mathbb R_+ \times \Omega \to X$ has the canonical decomposition $M = M^c +  M^q + M^a$, and then for each $t\geq 0$ we have that
 \begin{equation}\label{eq:L^pestforprop:candecofUMDspacevalL^pmart}
  \begin{split}
   \mathbb E \|M^c_t\|^p \leq \beta_{p,X}^p \mathbb E \|M_t\|^p,\\
   \mathbb E \|M^q_t\|^p \leq \beta_{p,X}^p \mathbb E \|M_t\|^p,\\
   \mathbb E \|M^a_t\|^p \leq \beta_{p,X}^p \mathbb E \|M_t\|^p,
  \end{split}
 \end{equation}
 where $\beta_{p, X}$ is the UMD$_p$ constant of $X$.
\end{proposition}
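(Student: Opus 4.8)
The plan is to obtain the canonical decomposition in two stages — the Meyer-Yoeurp splitting $M = M^c + M^d$ into a continuous and a purely discontinuous part, followed by the Yoeurp splitting $M^d = M^q + M^a$ — and, for each of the three resulting pieces, to settle \emph{existence} and its \emph{$L^p$-bound} separately. After a stopping time reduction we may assume that $M$ is an $L^p$-martingale.

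Existence is the ``soft'' part. For each $x^* \in X^*$ the scalar local martingale $\langle M, x^*\rangle$ has its Meyer-Yoeurp and Yoeurp decompositions (see \cite[Chapter 26]{Kal}), and one wants $X$-valued martingales $M^c, M^q, M^a$ that induce these coordinatewise. I would produce them through the jump random measure $\mu^M$ on $\mathbb R_+ \times X$ (as in the proof of Lemma~\ref{lem:goodcompensator}) with compensator $\nu^M$: after truncating to $\{\|x\| \ge \varepsilon\}$ and letting $\varepsilon \downarrow 0$ — a limiting step that is needed since for $p \ne 2$ the jumps of $M$ need not be summable — set $M^d := \int_{[0,\cdot]\times X} x \ud \bar\mu^M$, $M^c := M - M^d$, and then split $\widetilde\Omega$ along the $\widetilde{\mathcal P}$-measurable set $A$ of positions of the accessible jumps and its complement, putting $M^a := \int_{[0,\cdot]\times X} \mathbf 1_A\, x \ud \bar\mu^M$ and $M^q := M^d - M^a$. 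That $M^c$ is continuous, that $M^d, M^q, M^a$ are purely discontinuous, that $M^q$ is quasi-left continuous and $M^a$ has accessible jumps is then checked by testing against $x^* \in X^*$, using Proposition~\ref{thm:purdiscorthtoanycont1}, Lemma~\ref{lem:DeltaV_tau=0forpredictVandtotinactau} and the scalar uniqueness recorded in Remarks~\ref{rem:MY==weakMY} and~\ref{rem:Y==weakY}.

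The three $L^p$-estimates are the real content, and here UMD enters. The guiding principle is that each of $M \mapsto M^c$, $M\mapsto M^q$, $M\mapsto M^a$ keeps exactly one of the three mutually orthogonal ``constituents'' of $M$ (the continuous part, the inaccessible-jump part, the accessible-jump part) and annihilates the other two. To turn this into a genuine martingale transform I would enumerate the (countably many, after localisation) jumps of $M$ as $(\tau_k)$ and build a discrete filtration $\widetilde{\mathbb F}$ with respect to which $M$ is approximated in $\mathcal M_X^p$ by discrete martingales whose $k$-th increment is, up to compensation, either the continuous part of $M$ or a single jump $\Delta M_{\tau_k}$; in $\widetilde{\mathbb F}$ each of $M^c, M^q, M^a$ can then be realised as a $\{0,1\}$-valued predictable martingale transform of $M$. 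Writing such a transform $T$ as $T = \tfrac12(\mathrm{Id} + T')$ with $T'$ a $\{-1,1\}$-valued transform, the definition of $\beta_{p,X}$ gives $\|T(\cdot)\|_{L^p(\Omega;X)} \le \tfrac{\beta_{p,X}+1}{2}\|(\cdot)\|_{L^p(\Omega;X)} \le \beta_{p,X}\|(\cdot)\|_{L^p(\Omega;X)}$, where the last step uses $\beta_{p,X}\ge 1$. Passing to the limit (using $L^p$-convergence of the approximants and Fatou) then yields, for every $t \ge 0$,
\[
 \mathbb E\|M^c_t\|^p \le \beta_{p,X}^p\, \mathbb E\|M_t\|^p, \qquad \mathbb E\|M^q_t\|^p \le \beta_{p,X}^p\, \mathbb E\|M_t\|^p, \qquad \mathbb E\|M^a_t\|^p \le \beta_{p,X}^p\, \mathbb E\|M_t\|^p.
\]

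I expect the construction of this discrete approximating structure to be the main obstacle. The naive device of sampling $M$ on a refining time grid fails, because the Meyer-Yoeurp projection is \emph{not} continuous for that approximation (the grid samples of $M$ are purely discontinuous, so their continuous parts vanish and never converge back to $M^c$); one really has to isolate the jumps one at a time and verify, via a filtration-enlargement/disintegration argument in the spirit of \cite{Jac79}, that $M$ — continuous part and all — is a martingale with respect to $\widetilde{\mathbb F}$ with the claimed one-jump-per-step structure, and that the resulting discrete martingales converge to $M$ in $\mathcal M_X^p$. It is precisely this step (rather than routing the bounds through weak differential subordination, which would only give the weaker constant $\beta_{p,X}^2(\beta_{p,X}+1)$) that produces the clean constant $\beta_{p,X}$. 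Secondary technical points are the $\varepsilon\downarrow 0$ passage in the random-measure truncation needed to reach general $L^p$-integrability, and the fact that in the Banach-space setting the defining analytic properties of $M^c, M^q, M^a$ are only accessible coordinatewise and must be glued together using the scalar uniqueness statements.
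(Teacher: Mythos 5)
First, a point of comparison: the paper does not prove Proposition~\ref{prop:candecofUMDspacevalL^pmart} at all --- it is imported verbatim from \cite{Y17MartDec}, so there is no internal argument to match yours against. Judged on its own terms, your sketch has the right overall shape (it is close in spirit to the proof in the cited reference), but it leaves the entire load-bearing step unproven, and you say so yourself: the construction of the discrete filtration $\widetilde{\mathbb F}$ in which $M$ is approximated in $\mathcal M^p_X$ by discrete martingales with a one-jump-per-increment structure, and in which $M^c$, $M^q$, $M^a$ appear as predictable $\{0,1\}$-valued transforms, is exactly the content of the theorem. Declaring it ``the main obstacle'' and gesturing at a filtration-enlargement argument in the spirit of \cite{Jac79} does not discharge it; in particular you must show that the approximants converge to $M$ in $\mathcal M^p_X$ \emph{and} that their transforms converge to objects that are genuinely continuous / quasi-left continuous / with accessible jumps, which is where all the delicate limiting arguments live (compare the amount of work needed in the proofs of Theorems~\ref{thm:MYdecoflocmartsufofUMD} and~\ref{thm:YdecoflocmartsufofUMD}, which only handle the $L^1$-limit \emph{given} this proposition).

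Second, the framing ``existence is the soft part'' is wrong, and this is not a cosmetic issue. Theorem~\ref{thm:noUMDnodec} of this very paper exhibits a martingale with no Meyer--Yoeurp decomposition in any non-UMD space, so existence cannot be obtained by a UMD-free random-measure construction. Concretely, the truncated integrals $\int_{[0,\cdot]\times X}\mathbf 1_{\|x\|>\eps}\,x\ud\bar\mu^M$ are well defined for each fixed $\eps>0$, but their convergence as $\eps\downarrow 0$ is precisely equivalent to a uniform $L^p$-bound on the truncations, i.e.\ to the ``hard'' estimate you defer to the second half. The logical order has to be reversed: prove the $\beta_{p,X}$-bound for the approximating (truncated or discretised) martingales first, and then use completeness of $\mathcal M^p_X$ to define $M^d$, $M^q$, $M^a$ as limits. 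As written, your first paragraph asserts the existence of objects whose existence is the theorem. The constant-chasing in your last step (a $\{0,1\}$-valued predictable transform has norm at most $\tfrac{1}{2}(\beta_{p,X}+1)\le\beta_{p,X}$) is fine once the discrete structure is in place, but that structure is the proof, and it is missing.
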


It is a natural question whether the canonical decomposition is possible and whether one can extend \eqref{eq:L^pestforprop:candecofUMDspacevalL^pmart} in the case $p=1$.
It turns out that the UMD property is necessary and sufficient for the canonical decomposition of a general local martingale, while instead of \eqref{eq:L^pestforprop:candecofUMDspacevalL^pmart} one gets weak-type estimates:

\begin{theorem}[Canonical decomposition of local martingales]\label{thm:candecoflocmartsufandnesofUMD}
 Let $X$ be a Banach space. Then $X$ has the UMD property if and only if any local martingale $M:\mathbb R_+\times \Omega \to X$ has the canonical decomposition $M = M^c +  M^q + M^a$. If this is the case, then for any $\lambda >0$ and $t\geq 0$
 \begin{equation}\label{eq:L^1inftyforcor:candecoflocmartsufofUMD}
  \begin{split}
  \lambda \mathbb P ((M^c)^*_t >\lambda) &\lesssim_X \mathbb E \|M_t\|,\\
   \lambda \mathbb P ((M^q)^*_t >\lambda) &\lesssim_X \mathbb E \|M_t\|,\\
    \lambda \mathbb P ((M^a)^*_t >\lambda) &\lesssim_X \mathbb E \|M_t\|.
  \end{split}
 \end{equation}
\end{theorem}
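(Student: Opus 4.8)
The statement is an equivalence, so I would prove the two implications separately, producing the weak $L^1$-bounds \eqref{eq:L^1inftyforcor:candecoflocmartsufofUMD} along the way in the sufficiency direction. For sufficiency the plan is to upgrade the $L^2$-theory of Proposition~\ref{prop:candecofUMDspacevalL^pmart} to a weak $L^1$-statement by means of Theorem~\ref{thm:analougeofProp3.5.4} (which itself rests on Gundy's decomposition, Theorem~\ref{thm:Gundyconttime}) and then pass from $\mathcal M^2_X$ to $\mathcal M^1_X$ by density; for necessity the plan is to build, over an arbitrary non-UMD space, a single $L^1$-bounded martingale whose canonical decomposition cannot exist, using Burkholder's characterization from \cite{Burk81}. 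To begin the sufficiency part, assume $X$ is UMD. By Proposition~\ref{prop:candecofUMDspacevalL^pmart} every $M\in\mathcal M^2_X$ has a canonical decomposition $M=M^c+M^q+M^a$, and \eqref{eq:L^pestforprop:candecofUMDspacevalL^pmart} with $p=2$ shows the (linear, by uniqueness) maps $P^c:M\mapsto M^c$, $P^q:M\mapsto M^q$, $P^a:M\mapsto M^a$ are bounded on $\mathcal M^2_X$. For each $x^*\in X^*$ the real parts $\langle M^c,x^*\rangle$, $\langle M^q,x^*\rangle$, $\langle M^a,x^*\rangle$ have pairwise disjoint jump sets and the first is continuous, so $[\langle M,x^*\rangle]=[\langle M^c,x^*\rangle]+[\langle M^q,x^*\rangle]+[\langle M^a,x^*\rangle]$; together with $M^c_0=M^q_0=0$ and $M^a_0=M_0$ this makes each of $P^cM,P^qM,P^aM$ weakly differentially subordinate to $M$, and $\{M^*_\infty=0\}\subset\{(P^\bullet M)^*_\infty=0\}$ is clear. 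Hence $P^c,P^q,P^a$ are $L^2$-bounded weak differential subordination martingale transforms, and Theorem~\ref{thm:analougeofProp3.5.4} yields a constant $c_X$ together with extensions of each to a map $\mathcal M^1_X\to\mathcal M^{1,\infty}_X$ satisfying $\lambda\,\mathbb P((P^\bullet M)^*_\infty>\lambda)\le c_X\,\mathbb E\|M_\infty\|$ for all $M\in\mathcal M^1_X$.

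It then remains to check that for $M\in\mathcal M^1_X$ the triple $(P^cM,P^qM,P^aM)$ is a genuine canonical decomposition. Approximating $M_\infty$ in $L^1(\Omega;X)$ by $M^n_\infty$ with $M^n$ $L^\infty$-martingales, \eqref{eq:1,inftyineqforsubmart} gives $M^n\to M$ in the ucp topology, and the weak $L^1$-bound gives $P^\bullet M^n\to P^\bullet M$ in $\mathcal M^{1,\infty}_X$, hence in ucp; passing to the ucp limit in $M^n=(M^n)^c+(M^n)^q+(M^n)^a$ yields $M=P^cM+P^qM+P^aM$. Along a subsequence converging a.s.\ uniformly on compacts, uniform limits preserve continuity (so $P^cM$ is continuous, $(P^cM)_0=0$), Proposition~\ref{thm:purdiscorthtoanycont1} is stable under such limits (so $P^qM,P^aM$ are purely discontinuous, $(P^qM)_0=0$), and convergence of jumps at every fixed time forces $\Delta(P^qM)_\tau=0$ for predictable $\tau$ and $\Delta(P^aM)_\tau=0$ for totally inaccessible $\tau$; uniqueness is Remarks~\ref{rem:MY==weakMY} and \ref{rem:Y==weakY}. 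A general local martingale is handled by localizing ($M^{\sigma_m}\in\mathcal M^1_X$, $\sigma_m\uparrow\infty$) and patching the decompositions via uniqueness. For \eqref{eq:L^1inftyforcor:candecoflocmartsufofUMD} at a fixed $t$, a stopping argument reduces to $M\in\mathcal M^1_X$, to which one applies the $\infty$-estimate with $M$ replaced by $M^t\in\mathcal M^1_X$, whose canonical components are $(M^c)^t,(M^q)^t,(M^a)^t$.

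\emph{Necessity.} Suppose $X$ is not UMD. By Burkholder's characterization \cite{Burk81}, for each $k\ge1$ there are a finite $X$-valued discrete martingale $(f^{(k)}_n)_{n=0}^{N_k}$ with $f^{(k)}_0=0$, a sign sequence $(a^{(k)}_n)_{n=1}^{N_k}$ in $\{-1,1\}$, and the transform $g^{(k)}$ (so $g^{(k)}_0=0$ and $g^{(k)}_n-g^{(k)}_{n-1}=a^{(k)}_n(f^{(k)}_n-f^{(k)}_{n-1})$) with $\max_{0\le n\le N_k}\|g^{(k)}_n\|>1$ a.s.\ and $\mathbb E\|f^{(k)}_{N_k}\|\le 4^{-k}$. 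On a suitable filtered probability space (a product construction carrying, for each $k$, an independent copy of the above together with an auxiliary Poisson clock) one builds a martingale $M:\mathbb R_+\times\Omega\to X$ which realizes, in the window $I_k=[1-2^{1-k},1-2^{-k})$, the martingale difference $2^k(f^{(k)}_n-f^{(k)}_{n-1})$ as the $n$-th jump of $M$, placed at a totally inaccessible stopping time when $a^{(k)}_n=1$ and at a predictable (deterministic) time when $a^{(k)}_n=-1$; then $M$ is a purely discontinuous martingale with $M_\infty=\sum_k 2^k f^{(k)}_{N_k}$, so $\mathbb E\|M_\infty\|\le\sum_k 2^{-k}<\infty$, i.e.\ $M\in\mathcal M^1_X$. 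Suppose towards a contradiction that $M=M^c+M^q+M^a$ is a canonical decomposition; $M^c$ is then continuous and purely discontinuous, hence $M^c=0$ by Lemma~\ref{lemma:contpuredisczero}. Since $M$ is a pure-jump process, a short computation using Remark~\ref{rem:decompsitionspreserveDeltasRvaluedcase} (the jump of $Y:=M^q-M^a$ at a totally inaccessible jump time of $M$ equals $+\Delta M$, at a predictable one equals $-\Delta M$, and $Y$ is constant in between) shows that after $n$ steps inside $I_k$ the process $Y$ has changed by exactly $2^k g^{(k)}_n$ relative to its value at $\inf I_k$. Hence for each $k$ there is a time in $I_k\subset[0,1)$ at which $\|Y-Y_{\inf I_k}\|>2^k$, so $Y^*_1>2^{k-1}$ for every $k$ and thus $Y^*_1=\infty$ a.s.; but $Y$ is a càdlàg local martingale, hence has a left limit at $1$ and is bounded on $[0,1]$, so $Y^*_1<\infty$ a.s. — a contradiction. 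Therefore $X$ is UMD.

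\emph{Main obstacle.} The delicate points are: (a) in the sufficiency part, verifying that the weak $L^1$-extensions of $P^c,P^q,P^a$ still output components with the correct pathwise nature — continuity, pure discontinuity, quasi-left continuity, accessibility — which is precisely where one needs to pass to an a.s.\ uniform-on-compacts subsequence so that jumps at fixed times converge, besides using the stability of Proposition~\ref{thm:purdiscorthtoanycont1} and the completeness in Proposition~\ref{prop:Xvaluedcadlagareclosedunderucp}; and (b) in the necessity part, the realization of a prescribed discrete martingale transform as $M^q-M^a$ of a single continuous-time martingale, i.e.\ arranging the jump times to be predictable or totally inaccessible according to the signs on an enlarged filtration. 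Everything else reduces to Theorem~\ref{thm:Gundyconttime} and Theorem~\ref{thm:analougeofProp3.5.4}, which are already in hand.
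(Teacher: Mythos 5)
Your sufficiency argument follows the paper's route (Proposition~\ref{prop:candecofUMDspacevalL^pmart} plus Theorem~\ref{thm:analougeofProp3.5.4} plus a ucp limiting argument), but it glosses over two points the paper has to work for. First, the hypothesis \eqref{eq:starconditionforthm:analougeofProp3.5.4} for $P^q$ and $P^a$ is not ``clear'': the paper needs Lemmas~\ref{lem:starstaffvsqvofajmart}, \ref{lem:candecforsratprocessesandinfty} and \ref{lem:WDS0appearsearlierforN} to establish $\{M^*_\infty=0\}\subset\{(P^\bullet M)^*_\infty=0\}$, and for $M^a$ the inclusion is proved via the identification $\{N^*_\infty=0\}=\{[N]_\infty=0\}$, which fails for a general local martingale (it fails already for $M^q$, which is why Lemma~\ref{lem:WDS0appearsearlierforN} is stated only for transforms with accessible jumps). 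Second, a ucp limit of local martingales need not be a local martingale; in the Yoeurp step the approximants $M^{d,n}$ cannot simply be conditional-expectation truncations of $M^d$ (that would destroy pure discontinuity and the jump structure), which is why the paper first localizes at the first large jump and then truncates the big jumps using the compensator $\nu^{M^d}$ (the stopping times \eqref{eq:tau_nfromthm:YdecoflocmartsufofUMD}), and then spends real effort (the stopping times $\rho_l$, $\sigma_j$, dominated convergence, Lemma~\ref{lem:pdmartingalesisaclosedsbsinL^1}) to show the ucp limit is a genuine purely discontinuous quasi-left continuous local martingale. Your two-sentence treatment of this passage does not survive as written.

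For necessity you take a genuinely different route: the paper realizes the divergent Burkholder transform as the would-be \emph{continuous} part $M^c$ (jumps for $a_n=0$, small continuous martingales approximating Rademachers for $a_n=1$), whereas you realize it as $M^q-M^a$ by sorting jumps into totally inaccessible versus predictable times. This variant is plausible (the paper's final remarks indicate it can be made to work, following \cite[Subsection 3.2]{Y17MartDec}), but your key step has a genuine gap: the claim that $Y=M^q-M^a$ ``is constant in between'' jumps is false in general. A purely discontinuous local martingale is not a pure-jump process; between its jumps it moves by the negative of the compensator of its jump measure (a compensated Poisson process drifts linearly between jumps). Remark~\ref{rem:decompsitionspreserveDeltasRvaluedcase} only identifies $\Delta Y$ at the jump times; it says nothing about the interjump behaviour. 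To make your identification of $Y$ with the discrete transform $2^k g^{(k)}_n$ work, you must arrange the construction so that each jump mark is conditionally centered given $\mathcal F_{\tau-}$ and revealed exactly at the jump time (independent Rademacher sign at an independent exponential time, say), so that the compensator integral $\int x\ud\nu$ vanishes identically and each summand $\mathbf 1_{t\geq\tau}\xi$ is itself a pure-jump martingale. This is exactly the delicate point your ``short computation'' hides, and it needs to be carried out explicitly (together with a guarantee that the Poisson clock actually fires inside each subwindow) before the contradiction $Y^*_1=\infty$ can be drawn.
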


For the proof of the main theorem we will need a considerable amount of machinery, which will be provided in Subsection \ref{subsec:WDSoperator}-\ref{subsec:NecofUMD}.

\subsection{Weak differential subordination martingale transforms}\label{subsec:WDSoperator}

The current subsection is devoted to the proof of the fact that boundedness of a continuous-time martingale transform from a certain specific class acting on $L^p$-martingales implies the corresponding weak $L^1$-estimates. Such type of assertions for special discrete martingale transforms was first obtained by Burkholder in~\cite{Burk66}. Later the Burkholder's original statement was widely generalized in different directions (see \cite{Burk81,Hit90,HNVW1,MT00,BG70,GW05}), even though the martingale transforms were remaining acting on discrete martingales. The propose of the current section is to provide new results for martingale transforms of the same spirit by considering continuous-time martingales. This will allow us to consider linear operators that map a local martingale to the continuous part of the canonical decomposition, or the part of the canonical decomposition which is purely discontinuous with accessible jumps, so weak $L^1$-estimates \eqref{eq:L^1inftyforcor:candecoflocmartsufofUMD} will follow from $L^p$-estimates \eqref{eq:L^pestforprop:candecofUMDspacevalL^pmart} and Theorem \ref{thm:analougeofProp3.5.4}.

\smallskip

Before proving the main statement (Theorem \ref{thm:analougeofProp3.5.4}) we need to provide the reader with basic definitions. Let $M:\mathbb R_+ \times \Omega \to \mathbb R$ be a local martingale. We define a {\em quadratic 
variation} of $M$ in the following way:
\begin{equation}\label{eq:defquadvar}
  [M]_t  := \mathbb P-\lim_{{\rm mesh}\to 0}\sum_{n=1}^N |M(t_n)-M(t_{n-1})|^2,
\end{equation}
where the limit in probability is taken over partitions $0= t_0 < \ldots < t_N = 
t$. The reader can find more about a quadratic variation in \cite{Kal,Prot,MP,JS}.

Let $M,N:\mathbb R_+ \times \Omega \to \mathbb R$ be local martingales. Then $N$ is called to be {\em differentially subordinated} to $M$ (or $N\ll M$) if $|N_0|\leq|M_0|$ a.s.\ and $[N]_t-[N]_s\leq [M]_t-[M]_s$ a.s.\ for each $0\leq s\leq t<\infty$. We recommend the reader \cite{HNVW1,Os12,Wang,BB,BBB,Burk84} for further acquaintance with differential subordination.

Let $X$ be a Banach space, $M, N:\mathbb R_+ \times \Omega \to X$ be local martingales. Then $N$ is called to be {\em weakly differentially subordinated} to $M$ (or $N \stackrel{w}\ll M$) if $\langle N, x^*\rangle$ is differentially subordinated to $\langle M, x^*\rangle$ for each $x^* \in X^*$. The reader can find more details on weak differential subordination in \cite{Y17FourUMD,Y17UMD^A,Y17MartDec,OY18}.

The following theorem will be an important tool to show Theorem \ref{thm:candecoflocmartsufandnesofUMD} and it is connected with \cite[Proposition 3.5.4]{HNVW1}. Recall that $\mathcal M^{p}_X $ is a space of all \mbox{$L^p$-in}\-teg\-rable $X$-valued martingales, and $\mathcal M^{p, \rm loc}_X $ is a space of all locally $L^p$-integrable $X$-valued martingales (see Subsection \ref{subsec:martandcadlagprocesses}).

\begin{theorem}\label{thm:analougeofProp3.5.4}
 Let $X$ be a Banach space, $p\in (1,\infty)$, $T:\mathcal M^{p, \rm loc}_X \to \mathcal M^{p, \rm loc}_X$ be a~linear operator such that $TM \stackrel{w}\ll M$ and
 \begin{equation}\label{eq:starconditionforthm:analougeofProp3.5.4}
  M^*_{\infty} = 0\Longrightarrow (TM)^*_{\infty} = 0\;\;\; \text{a.s.}
 \end{equation}
for each $M \in \mathcal M^{p}_X$.
Assume that $T \in \mathcal L(\mathcal M^{p}_X)$. Then 
for any $M\in \mathcal M^p_X$
\begin{equation}\label{eq:estforL^1L^1inftynormthrL^pnorm}
 \lambda\mathbb P(\|(TM)^*_{\infty}\|>\lambda) \leq C_{p, T, X} \mathbb E \|M_{\infty}\|,\;\;\; \lambda >0,
\end{equation}
where $C_{p,T, X} = 26\|T\|_{\mathcal L(\mathcal M^p_X)}\frac{p}{p-1}+28$.
\end{theorem}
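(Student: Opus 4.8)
The plan is to combine Gundy's decomposition (Theorem~\ref{thm:Gundyconttime}) with the $L^p$-boundedness of $T$, following the classical scheme of Burkholder's weak-$(1,1)$ argument adapted to continuous time. Fix $M \in \mathcal M^p_X$ and $\lambda > 0$. Applying Theorem~\ref{thm:Gundyconttime} to $M$ at level $\lambda$, write $M = M^1 + M^2 + M^3$ with $\|M^1_t\|_{L^\infty(\Omega;X)} \le 2\lambda$, $\lambda\P((M^2)^*_t > 0) \le 4\E\|M_t\|$, and $\E(\Var M^3)_t \le 7\E\|M_t\|$ for all $t$; by Remark~\ref{rem:localLpintegrabilityGundysdec} all three pieces are locally $L^p$-integrable, so $TM^1$, $TM^2$, $TM^3$ make sense and $TM = TM^1 + TM^2 + TM^3$ by linearity. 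I would then split
\[
 \P((TM)^*_\infty > \lambda) \le \P((TM^1)^*_\infty > \tfrac\lambda3) + \P((TM^2)^*_\infty > \tfrac\lambda3) + \P((TM^3)^*_\infty > \tfrac\lambda3),
\]
and bound each of the three terms separately.

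For the first term, $M^1$ is a bounded martingale, hence in $\mathcal M^p_X$ with $\E\|M^1_\infty\|^p \le (2\lambda)^{p-1}\E\|M^1_\infty\| \le (2\lambda)^{p-1}\cdot 5\,\E\|M_\infty\|$ (using $\|M^1_\infty\| \le 2\lambda$ and part (i) of Gundy's theorem together with monotone/Doob convergence to pass from finite $t$ to $t=\infty$). Now $T \in \mathcal L(\mathcal M^p_X)$ gives $\E\|(TM^1)_\infty\|^p \le \|T\|^p_{\mathcal L(\mathcal M^p_X)}\E\|M^1_\infty\|^p$, and Doob's $L^p$-inequality upgrades this to a bound on $\E((TM^1)^*_\infty)^p$ with the constant $\frac{p}{p-1}$; Chebyshev then yields a bound of the right form $\lesssim \|T\|\frac{p}{p-1}\lambda^{-1}\E\|M_\infty\|$. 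For the second term, the key observation is that $(M^2)^*_\infty = 0$ on the event $\{(M^2)^*_\infty = 0\}$, so by hypothesis \eqref{eq:starconditionforthm:analougeofProp3.5.4} we have $(TM^2)^*_\infty = 0$ there too; hence $\{(TM^2)^*_\infty > \lambda/3\} \subseteq \{(M^2)^*_\infty > 0\}$ and part (ii) of Gundy's theorem (taken at $t=\infty$) directly gives $\lambda\P((TM^2)^*_\infty > \lambda/3) \le 4\,\E\|M_\infty\|$.

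The third term is the genuinely delicate one and I expect it to be the main obstacle. Here $M^3$ has integrable variation, so its jumps are summable and one would like to say $TM^3$ is small. The idea is to use weak differential subordination: since $TM^3 \stackrel{w}\ll M^3$, for each $x^* \in X^*$ one has $[\langle TM^3, x^*\rangle] \le [\langle M^3, x^*\rangle]$, and because $M^3$ has paths of finite variation, $[\langle M^3,x^*\rangle]_\infty = \sum_s |\langle \Delta M^3_s, x^*\rangle|^2 \le \|x^*\|^2\bigl((\Var M^3)_\infty\bigr)^2$ — so $\langle TM^3,x^*\rangle$ has controllably small quadratic variation. One then wants to conclude that $(TM^3)^*_\infty$ is itself controlled by $(\Var M^3)_\infty$ in a weak-$L^1$ sense. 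I would do this by a stopping-time truncation: let $\rho := \inf\{t : (\Var M^3)_t > \lambda\}$ (predictable, by the argument in Lemma~\ref{lemma:locbddnessofpredicproc} / \eqref{eq:tauispredictifVispredict}), split $M^3 = (M^3)^{\rho-} + \bigl(M^3 - (M^3)^{\rho-}\bigr)$; on $\{\rho = \infty\}$ we have $(\Var M^3)_\infty \le \lambda$ so a differential-subordination weak-type estimate (for real-valued differentially subordinated martingales with one of finite variation, e.g. via Burkholder's sharp weak-$(1,1)$ bound applied coordinatewise and then a random-choice / UMD-free argument — or more elementarily, by controlling $\langle TM^3, x^*\rangle^*_\infty$ through $[\langle M^3,x^*\rangle]^{1/2}$ and a good-$\lambda$ inequality) forces $(TM^3)^*_\infty \lesssim \lambda$ off a set of $M$-mass $\lesssim \lambda^{-1}\E\|M_\infty\|$; and $\P(\rho < \infty) = \P((\Var M^3)_\infty \ge \lambda) \le \lambda^{-1}\E(\Var M^3)_\infty \le 7\lambda^{-1}\E\|M_\infty\|$ handles the complementary event, while the piece after $\rho$ is again killed by hypothesis \eqref{eq:starconditionforthm:analougeofProp3.5.4} since $(M^3 - (M^3)^{\rho-})^*$ vanishes before $\rho$. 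Collecting the three bounds and optimizing constants gives \eqref{eq:estforL^1L^1inftynormthrL^pnorm} with $C_{p,T,X} = 26\|T\|_{\mathcal L(\mathcal M^p_X)}\frac{p}{p-1} + 28$; the numerical bookkeeping of the factors $3$, $4$, $5$, $7$ and the Doob constant is routine once the structure above is in place.
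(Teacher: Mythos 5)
Your treatment of $M^1$ and $M^2$ matches the paper's argument. (One caveat: the paper runs Gundy's decomposition at level $\alpha\lambda$ and optimizes over $\alpha$ at the end; this is what produces a constant \emph{linear} in $\|T\|_{\mathcal L(\mathcal M^p_X)}$. With the level fixed at $\lambda$ as you propose, the first term yields $\|T\|^p_{\mathcal L(\mathcal M^p_X)}$, so the free parameter is not mere bookkeeping.) The genuine gap is in the third term, exactly where you flag the difficulty. Your plan is to control $(TM^3)^*_\infty$ through the scalar quadratic variations $[\langle TM^3,x^*\rangle]\le[\langle M^3,x^*\rangle]$ and then invoke Burkholder's scalar weak $(1,1)$ estimate ``coordinatewise'' or a good-$\lambda$ inequality. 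This does not close: each fixed $x^*$ gives a weak-type bound for $(\langle TM^3,x^*\rangle)^*_\infty$ with an exceptional set depending on $x^*$, and there is no way to take the supremum over the unit ball of $X^*$ inside the probability without geometric hypotheses on $X$ --- this is precisely the obstruction that the UMD property is normally invoked to overcome, whereas the theorem must hold for an arbitrary Banach space. The truncation at $\rho=\inf\{t:(\Var M^3)_t>\lambda\}$ does not remove this obstruction (and, as an aside, $\Var M^3$ is only optional here, not predictable, since $M^3$ jumps at the non-predictable time $\tau$ from Gundy's construction, so the predictability claim for $\rho$ via \eqref{eq:tauispredictifVispredict} is also unjustified, though unneeded).

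What the paper does instead (Lemma~\ref{lem:WDSvariations}) is pathwise at the level of jumps and avoids any weak-type estimate for subordinated martingales: since $TM^3\stackrel{w}\ll M^3$ and both are purely discontinuous ($M^3$ has integrable variation), weak differential subordination forces $\Delta(TM^3)_t=a(t,\omega)\Delta M^3_t$ for some $|a(t,\omega)|\le 1$ (a fact from \cite{Y17FourUMD}); hence $\sum_s\|\Delta(TM^3)_s\|\le\sum_s\|\Delta M^3_s\|$ pointwise, and writing $TM^3$ as the integral against its compensated jump measure (Lemma~\ref{lem:M=intwrtcompranmeas}) gives $\E(\Var TM^3)_\infty\le 2\E(\Var M^3)_\infty\le 14\,\E\|M_\infty\|$. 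The trivial pathwise bound $(TM^3)^*_\infty\le(\Var TM^3)_\infty$ plus Markov's inequality then finishes the third term with no good-$\lambda$ argument and no truncation. You should replace your sketch for $M^3$ with this step; the rest of your outline then coincides with the paper's proof.
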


\begin{remark}
 Notice that if $X$ is a UMD Banach space, then $T$ is automatically bounded on $\mathcal M^{p}_X$ and $\|T\|_{\mathcal L(\mathcal M^p_X)} \leq \beta_{p,X}^2(\beta_{p, X}+1)$ by \eqref{eq:introL^pestWDS} and \cite{Y17MartDec} since $TM \stackrel{w}\ll M$ for any $M\in \mathcal M^{p}_X$.
\end{remark}

For the proof we will need several lemmas.

\begin{lemma}\label{lem:M=intwrtcompranmeas}
 Let $X$ be a Banach space, $M:\mathbb R_+ \times \Omega \to X$ be a purely discontinuous martingale with $M_0=0$ a.s. Let $\mu^M$ be the corresponding random measure defined as in \eqref{eq:defofranmeasuremu^M}. Assume that
 \begin{equation}\label{eq:lemmaintwrtrmequalspdmartXvaluedineqforjumps}
  \mathbb E \sum_{s\geq 0} \|\Delta M_s\| = \mathbb E\int_{\mathbb R_+ \times X} \|x\|\ud \mu^M<\infty.
 \end{equation}
Then $M_t = \int_{[0,t]\times X}x\ud \bar{\mu}^M$ for each $t\geq 0$ a.s.
\end{lemma}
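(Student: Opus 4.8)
\textbf{Proof proposal for Lemma \ref{lem:M=intwrtcompranmeas}.}
The plan is to reduce the vector-valued statement to the scalar one by testing against functionals, then invoke the theory already set up in Subsection~\ref{subsec:randommeasures}. First I would fix $x^* \in X^*$ and observe that $\langle M, x^*\rangle$ is a real-valued purely discontinuous martingale (by the definition in Subsection~\ref{subsec:pdmart}), that $\langle M, x^*\rangle_0 = 0$ a.s., and that $\Delta\langle M, x^*\rangle_s = \langle \Delta M_s, x^*\rangle$, so the summability hypothesis \eqref{eq:lemmaintwrtrmequalspdmartXvaluedineqforjumps} passes to the scalar martingale: $\mathbb E\sum_{s\geq 0}|\Delta\langle M, x^*\rangle_s| \leq \|x^*\|\,\mathbb E\sum_{s\geq 0}\|\Delta M_s\| < \infty$. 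For a scalar purely discontinuous martingale of integrable variation this is the classical fact that it equals the compensated sum of its jumps, i.e.\ $\langle M, x^*\rangle_t = \int_{[0,t]\times \mathbb R}u\ud\bar\nu^{x^*}$ where $\nu^{x^*}$ is the jump measure of $\langle M, x^*\rangle$; concretely, since $\langle M_t, x^*\rangle = \sum_{0\leq s\leq t}\langle\Delta M_s, x^*\rangle - (\text{compensator})$, this is exactly the content of the discussion below \cite[Definition~1.27]{JS} (compare the proof of Lemma~\ref{lem:intwrtrmidapdlocmart}).

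Next I would identify the scalar integral against $\bar\nu^{x^*}$ with the pushforward of the integral against $\bar\mu^M$. By the definition \eqref{eq:defofranmeasuremu^M} of $\mu^M$, the image of $\mu^M$ under the map $(s,x)\mapsto (s,\langle x, x^*\rangle)$ on $\{x \neq 0\}$ is precisely the jump measure of $\langle M, x^*\rangle$ off zero; more directly, for $\widetilde{\mathcal P}$-measurable scalar integrands one has $\int_{[0,t]\times X}\langle x, x^*\rangle\ud\mu^M = \int_{[0,t]\times \mathbb R}u\ud\nu^{x^*}_{\text{opt}}$ with $\nu^{x^*}_{\text{opt}}$ the optional jump measure, and the compensators match by uniqueness of the compensator (\cite[Theorem~II.1.8]{JS}) applied to the two sides, which agree on nonnegative $\widetilde{\mathcal P}$-measurable test functions. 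Hence $\langle \int_{[0,t]\times X}x\ud\bar\mu^M, x^*\rangle = \int_{[0,t]\times X}\langle x, x^*\rangle\ud\bar\mu^M = \langle M_t, x^*\rangle$ for every $x^*$, and since this holds for all $x^* \in X^*$ we conclude $M_t = \int_{[0,t]\times X}x\ud\bar\mu^M$ a.s.\ for each $t\geq 0$. Note that the right-hand side is well-defined as an $X$-valued process by Lemma~\ref{lem:intwrtrmidapdlocmart}, using exactly the hypothesis \eqref{eq:lemmaintwrtrmequalspdmartXvaluedineqforjumps}.

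The step I expect to require the most care is the interchange of the functional with the stochastic integral against the compensated measure, i.e.\ justifying $\langle \int_{[0,t]\times X}x\ud\bar\mu^M, x^*\rangle = \int_{[0,t]\times X}\langle x, x^*\rangle\ud\bar\mu^M$: for the compensated part this is immediate by linearity and the definition of $\bar\mu^M = \mu^M - \nu^M$ together with the fact that $\int\|x\|\ud\nu^M < \infty$ a.s., but one must check that the Bochner-integral manipulation $\langle\int x\ud\nu^M, x^*\rangle = \int\langle x, x^*\rangle\ud\nu^M$ is legitimate, which follows from the a.s.\ absolute integrability $\int_{[0,t]\times X}\|x\|\ud\nu^M < \infty$ guaranteed by the equivalence of the $\mu^M$- and $\nu^M$-integrability of $\|x\|$ recorded in Subsection~\ref{subsec:randommeasures}. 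The remaining subtlety is a null-set issue: the identity $M_t = \int_{[0,t]\times X}x\ud\bar\mu^M$ is obtained for each fixed $t$ after discarding an $x^*$-dependent null set, so one should either work with a separating sequence $(x^*_n)_{n\geq 1}$ (available since $X$ may be taken separable by restricting to the closed span of the essential range of $M$) or invoke that both sides are càdlàg in $t$ to upgrade to an a.s.\ identity of processes; I would spell this out briefly as in the analogous reductions used for Lemma~\ref{lem:intwrtrmidapdlocmart}.
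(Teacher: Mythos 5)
Your proof is correct, but it takes a genuinely different route from the paper's. You scalarize: test against $x^*\in X^*$, invoke the classical fact that a real-valued purely discontinuous martingale with integrable jump variation is the compensated sum of its jumps, identify the pushforward of $\mu^M$ under $(s,x)\mapsto(s,\langle x,x^*\rangle)$ with the jump measure of $\langle M,x^*\rangle$ (with matching compensators, by uniqueness), and then handle the $x^*$-dependent null sets via a separating sequence. The paper instead argues entirely at the vector-valued level: with $N_t=\sum_{0\leq s\leq t}\Delta M_s$ (which is the $\mu^M$-integral of $x$) and $V=N-M$, the process $M-\int_{[0,\cdot]\times X}x\ud\bar\mu^M$ is, up to sign, $V-\int_{[0,\cdot]\times X}x\ud\nu^M$; this is a martingale that is purely discontinuous (as a difference of two purely discontinuous martingales, using Lemma~\ref{lem:intwrtrmidapdlocmart}) and simultaneously predictable ($V$ is continuous because $\Delta V=\Delta N-\Delta M=0$, and the $\nu^M$-integral is predictable), hence continuous, hence zero by Lemma~\ref{lemma:contpuredisczero} since it starts at zero. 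The paper's argument thus avoids both the scalar representation theorem and the pushforward/compensator bookkeeping, while yours leans on standard scalar theory and so requires less of the paper's own machinery. One caveat on your version: the ``discussion below \cite[Definition~1.27]{JS}'' only yields the forward direction (that a compensated integral is a purely discontinuous local martingale); to conclude that $\langle M,x^*\rangle$ equals the compensated sum of its own jumps you additionally need that a purely discontinuous local martingale is determined by its jumps, together with the fact that the jumps of $\int u\ud\bar\nu^{x^*}$ agree with those of $\langle M,x^*\rangle$ (i.e.\ that $\int u\,\nu^{x^*}(\{t\}\times\ud u)=0$, which holds because it is the predictable projection of the jump of a martingale at a predictable time). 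This is classical --- essentially Yoeurp's theorem --- but deserves a precise citation rather than the one you give.
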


\begin{proof}
 By \eqref{eq:lemmaintwrtrmequalspdmartXvaluedineqforjumps} there exists $N:\mathbb R_+ \times \Omega \to X$ such that $N_t = \sum_{0\leq s\leq t}\Delta M_s$ for each $t\geq 0$. Let $V = N-M$.
 Then both $t\mapsto N_t-V_t = M_t$, $t\geq 0$, and 
 $$
 t\mapsto N_t-\int_{[0,t]\times X}x\ud \nu^M = \int_{[0,t]\times X}x\ud \mu^M - \int_{[0,t]\times X}x\ud \nu^M = \int_{[0,t]\times X}x\ud \bar{\mu}^M,\;\; t\geq 0,
 $$
 are martingales. 
 Therefore
 \[
  t\mapsto V_t - \int_{[0,t]\times X}x\ud \nu^M = M_t-\int_{[0,t]\times X} x\ud \bar{\mu}^M,\;\;\; t\geq 0,
 \]
is a predictable martingale, which is purely discontinuous as a difference of two purely discontinuous martingales (see Lemma \ref{lem:intwrtrmidapdlocmart}). On the other hand it is continuous by the predictability (see e.g.\ \cite[Theorem 4]{Kr16} and \cite[Corollary 2.1.42]{HOSAAA2002}). Hence by Lemma \ref{lemma:contpuredisczero} this martingale equals zero since it starts at zero, so $M = N-V = \int_{[0,\cdot] \times X}x\ud \bar{\mu}^M$.
 \end{proof}

\begin{lemma}\label{lem:WDSvariations}
 Let $X$ be a Banach space, $M, N:\mathbb R_+ \times \Omega \to X$ be purely discontinuous martingales such that $N\stackrel{w}\ll M$. Then $\mathbb E (\Var N)_t \leq  2 \mathbb E (\Var M)_t$ for each $t\geq 0$.
\end{lemma}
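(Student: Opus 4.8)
The inequality is trivial when $\mathbb E(\Var M)_t=\infty$, so assume $\mathbb E(\Var M)_t<\infty$. The first observation is a jump domination: since $\langle N,x^*\rangle\ll \langle M,x^*\rangle$ gives $(\Delta\langle N,x^*\rangle_s)^2=\Delta[\langle N,x^*\rangle]_s\leq \Delta[\langle M,x^*\rangle]_s=(\Delta\langle M,x^*\rangle_s)^2$ for all $s>0$ a.s., a routine argument with a countable norming subset of the (a.s.\ separable) ranges of $M$ and $N$ of the kind used for Lemma \ref{lem:DeltaV_tau=0forpredictVandtotinactau} yields $\|\Delta N_s\|\leq\|\Delta M_s\|$ for all $s>0$ a.s., as well as $\|N_0\|\leq\|M_0\|$. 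Since $(\Var M)_t\geq \|M_0\|+\sum_{0<s\leq t}\|\Delta M_s\|$ by \eqref{eq:defofVarofM}, this already gives $\mathbb E\sum_{0<s\leq t}\|\Delta N_s\|\leq \mathbb E(\Var M)_t<\infty$.

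Next I would perform two reductions. First, as $(\Var N)_t=\|N_0\|+(\Var(N-N_0))_t$ and likewise for $M$, and as $\mathbb E\|N_0\|\leq\mathbb E\|M_0\|$, it suffices to prove $\mathbb E(\Var(N-N_0))_t\leq 2\mathbb E(\Var(M-M_0))_t$; so we may assume $M_0=N_0=0$ (the jump domination and pure discontinuity are preserved). Second, replacing $M,N$ by the stopped martingales $M^t,N^t$ — which remain purely discontinuous martingales with $N^t\stackrel{w}\ll M^t$, and whose variations at time $t$ are unchanged since the stopped processes are constant afterwards — we may further assume $\mathbb E\sum_{s\geq 0}\|\Delta M_s\|<\infty$ (hence $\mathbb E\sum_{s\geq 0}\|\Delta N_s\|<\infty$). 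It then remains to show $\mathbb E(\Var N)_\infty\leq 2\mathbb E(\Var M)_\infty$ under these assumptions.

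For the core of the argument, set $\widetilde N_u:=\sum_{0<s\leq u}\Delta N_s$, $u\geq 0$; this is a c\`adl\`ag adapted pure jump process with $\mathbb E(\Var\widetilde N)_\infty=\mathbb E\sum_{s\geq 0}\|\Delta N_s\|<\infty$, so by Lemma \ref{lem:goodcompensator} it has a c\`adl\`ag predictable compensator $V$ with $\mathbb E(\Var V)_\infty\leq\mathbb E(\Var\widetilde N)_\infty$. By Lemma \ref{lem:M=intwrtcompranmeas} applied to $N$ together with the construction in the proof of Lemma \ref{lem:goodcompensator} (the compensator $V$ is precisely $\int_{[0,\cdot]\times X}x\ud\nu^N$, while $\widetilde N=\int_{[0,\cdot]\times X}x\ud\mu^N$), one has $N=\int_{[0,\cdot]\times X}x\ud\bar\mu^N=\widetilde N-V$. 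Hence, using \eqref{eq:defofVarofM} and the triangle inequality pathwise,
\[
\mathbb E(\Var N)_\infty\leq \mathbb E(\Var\widetilde N)_\infty+\mathbb E(\Var V)_\infty\leq 2\mathbb E(\Var\widetilde N)_\infty=2\mathbb E\sum_{s\geq 0}\|\Delta N_s\|\leq 2\mathbb E\sum_{s\geq 0}\|\Delta M_s\|\leq 2\mathbb E(\Var M)_\infty,
\]
where the last two steps use the jump domination and $(\Var M)_\infty\geq\sum_{s\geq 0}\|\Delta M_s\|$. Unwinding the reductions gives the claim.

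The only genuinely delicate points are the passage from the coordinatewise jump estimate to $\|\Delta N_s\|\leq\|\Delta M_s\|$ (a standard separability/norming-set argument) and the bookkeeping needed to apply Lemma \ref{lem:M=intwrtcompranmeas} and Lemma \ref{lem:goodcompensator} in tandem, i.e.\ identifying the compensator of the jump-sum $\widetilde N$ with the predictable integral $\int x\ud\nu^N$; everything else is elementary manipulation of the definition of variation.
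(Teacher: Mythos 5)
Your proof is correct and follows essentially the same route as the paper: both write $N=\int_{[0,\cdot]\times X}x\ud\bar\mu^N$ via Lemma \ref{lem:M=intwrtcompranmeas}, split off the compensator, bound each of the two variations by $\mathbb E\int\|x\|\ud\mu^N=\mathbb E\sum_s\|\Delta N_s\|$, and finish with the jump domination $\|\Delta N_s\|\leq\|\Delta M_s\|$ coming from weak differential subordination (the paper cites the pointwise representation $\Delta N_s=a(s,\cdot)\Delta M_s$ from \cite{Y17FourUMD} where you use a norming-set argument, but these are interchangeable here). Your extra reductions to $M_0=N_0=0$ and to stopped martingales are a slightly more careful bookkeeping of the hypotheses of Lemma \ref{lem:M=intwrtcompranmeas} than the paper bothers with, not a different idea.
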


\begin{proof}
 Without loss of generality $\mathbb E (\Var M)_{\infty}<\infty$. Notice that since $N\stackrel{w}\ll M$, for a.e.\ $(t,\omega)\in \mathbb R_+ \times \Omega$ there exists $a(t,\omega)\in [-1,1]$ such that $\Delta N_t(\omega) = a(t,\omega)\Delta M_t(\omega)$ (see \cite{Y17FourUMD}). Therefore a.s.\ for each $t\geq 0$
 \begin{equation}\label{eq:proofoflemmaonvariationsWDSpdmart}
  \begin{split}
     \int_{[0,t]\times X} \|x\|\ud \mu^N(x,s) &=\sum_{0\leq s\leq t} \|\Delta N_s\| = \sum_{0\leq s\leq t} |a(s,\cdot)| \|\Delta M_s\|\\
  &\leq \sum_{0\leq s\leq t} \|\Delta M_s\| \leq (\Var M)_t.
  \end{split}
 \end{equation}
So by Lemma \ref{lem:M=intwrtcompranmeas} $N = \int_{[0,\cdot]\times X}x\ud \bar{\mu}^N$, hence
\begin{align*}
 (\Var N)_t &= \Bigl(\Var \int_{[0,\cdot]\times X} x\ud \bar{\mu}^N(x,s)\Bigr)_t \\
 &= \Bigl(\Var \Bigl( \int_{[0,\cdot]\times X} x\ud {\mu}^N(x,s) -\int_{[0,\cdot]\times X} x\ud {\nu}^N(x,s) \Bigr)\Bigr)_t \\
 &\leq \Bigl(\Var \int_{[0,\cdot]\times X} x\ud {\mu}^N(x,s)\Bigr)_t + \Bigl(\Var \int_{[0,\cdot]\times X} x\ud {\nu}^N(x,s)\Bigr)_t\\
 &\leq \int_{[0,t]\times X} \|x\|\ud {\mu}^N(x,s) + \int_{[0,t]\times X} \|x\|\ud {\nu}^N(x,s)\\
 &= 2\int_{[0,t]\times X} \|x\|\ud {\mu}^N(x,s)\stackrel{(*)}\leq 2(\Var M)_t,
\end{align*}
where $(*)$ holds by \eqref{eq:proofoflemmaonvariationsWDSpdmart}.
\end{proof}

\begin{proof}[Proof of Theorem \ref{thm:analougeofProp3.5.4}] The proof has the same structure as the proof of \cite[Proposition~3.5.16]{HNVW1}. Fix $M\in \mathcal M^p_{X}$ and $\lambda >0$. Let $K:= \|T\|_{\mathcal L(\mathcal M^p_X)}$, $M = M^1 + M^2 + M^3$ be Gundy's decomposition of $M$ from Theorem \ref{thm:Gundyconttime} at the level $\alpha \lambda$ for some $\alpha>0$ which we will fix later. Notice that all $M^1$, $M^2$ and $M^3$ are local $L^p$-martingales by Remark \ref{rem:localLpintegrabilityGundysdec}. Then
 \begin{multline}\label{eq:GundydecintheproofofWDSop}
    \mathbb P (\|(TM)^*_{\infty}\|>\lambda) \\
    \leq \mathbb P(\|(TM^1)^*_{\infty}\|>\tfrac{\lambda}{2}) + \mathbb P (\|(TM^2)^*_{\infty}\|>0) + \mathbb P(\|(TM^3)^*_{\infty}\|>\tfrac{\lambda}{2}).
 \end{multline}
Let us estimate each of these three terms separately. First,
\begin{align*}
 \mathbb P(\|(TM^1)^*_{\infty}\|>\tfrac{\lambda}{2}) &\stackrel{(i)}\leq  \Bigl(\frac 2{\lambda}\Bigr)^{p} \mathbb E\|(TM^1)^*_{\infty}\|^p \stackrel{(*)}\leq \Bigl(\frac 2{\lambda} \frac{p}{p-1}\Bigr)^{p} \mathbb E\|(TM^1)_{\infty}\|^p\\ &\stackrel{(ii)}\leq \Bigl( \frac{2K}{\lambda}\frac{p}{p-1}\Bigr)^{p} \mathbb E\|M^1_{\infty}\|^p
 \leq \Bigl( \frac{2K}{\lambda}\frac{p}{p-1}\Bigr)^{p} \|M^1_{\infty}\|^{p-1}_{\infty}\mathbb E\|M^1_{\infty}\|\\ &\stackrel{(iii)}\leq\Bigl( \frac{2K}{\lambda}\frac{p}{p-1}\Bigr)^{p} (2\alpha\lambda)^{p-1} 5\mathbb E \|M_{\infty}\|
 =\frac{5\bigl(4\alpha K\frac{p}{p-1}\bigr)^p}{2\alpha \lambda}\mathbb E \|M_{\infty}\|,
\end{align*}
where $(i)$ follows from \eqref{eq:1,inftyineqforsubmart}, $(*)$ follows from Doob's maximal inequality \cite[Theorem 1.3.8(iv)]{KS}, $(ii)$ holds by the definition of $K$, and $(iii)$ follows from Gundy's decomposition.

Now turn to $M^2$. By \eqref{eq:starconditionforthm:analougeofProp3.5.4}
\begin{equation}\label{eq:fromstarconditionforthm:analougeofProp3.5.4tooperator}
  \mathbb P((TM^2)^*_{\infty}>0)\leq \mathbb P((M^2)^*_{\infty}>0)\leq \frac{4}{\alpha\lambda}\mathbb E \|M_{\infty}\|.
\end{equation}
Finally, by Lemma \ref{lem:WDSvariations} and the fact that $TM^3\stackrel{w}\ll M^3$ we have that 
$$
\mathbb E (\Var TM^3)_{\infty}\leq 2 \mathbb E (\Var M^3)_{\infty},
$$
hence
\begin{align*}
 \mathbb P(\|(TM^3)^*_{\infty}\|>\tfrac{\lambda}{2}) &\stackrel{(i)}\leq \frac{2}{\lambda}\mathbb E \|(TM^3)^*_{\infty}\| \leq \frac{2}{\lambda} \mathbb E (\Var TM^3)_{\infty}\\
 &\stackrel{(ii)}\leq \frac{4}{\lambda}\mathbb E (\Var M^3)_{\infty}
 \stackrel{(*)}\leq \frac{28}{\lambda}\mathbb E \|M_{\infty}\|,
\end{align*}
where $(i)$ follows from \eqref{eq:1,inftyineqforsubmart}, $(ii)$ holds by \eqref{eq:fromstarconditionforthm:analougeofProp3.5.4tooperator}, and $(*)$ holds by Theorem~\ref{thm:Gundyconttime}(iii).
Therefore by \eqref{eq:GundydecintheproofofWDSop}
\begin{align*}
 \lambda  \mathbb P (\|(TM)^*_{\infty}\|>\lambda) &\leq \lambda\Bigl(\frac{5\bigl(4\alpha K\frac{p}{p-1}\bigr)^p}{2\alpha \lambda} +\frac{4}{\alpha\lambda} + \frac{28}{\lambda}\Bigr) \mathbb E\|M_{\infty}\|\\
 &=\Bigl(\frac{5\bigl(4\alpha K\frac{p}{p-1}\bigr)^p}{2\alpha} +\frac{4}{\alpha} +28\Bigr) \mathbb E\|M_{\infty}\|,
\end{align*}
and by choosing $\alpha = \frac{p-1}{4Kp}$ we get
\begin{align*}
  \lambda  \mathbb P (\|(TM)^*_{\infty}\|>\lambda) &\leq \Bigl(10K\frac{p}{p-1} +16K\frac{p}{p-1}+28\Bigr) \mathbb E\|M_{\infty}\| \\
 &= \Bigl(26K\frac{p}{p-1}+28\Bigr) \mathbb E\|M_{\infty}\|,
\end{align*}
which is exactly \eqref{eq:estforL^1L^1inftynormthrL^pnorm}.
\end{proof}

The following proposition shows that the operator $T$ from Theorem \ref{thm:analougeofProp3.5.4} has a~special structure given the filtration $\mathbb F = (\mathcal F_t)_{t\geq 0}$ is generated by $(\mathcal F_n)_{n\geq 0}$: such martingale transforms are the same as those considered in \cite[Proposition 3.5.4]{HNVW1} and \cite{Burk81}.

\begin{proposition}\label{prop:discretefiltrationopProp3.5.4}
 Let $X$ be a separable Banach space. Let the filtration $\mathbb F = (\mathcal F_t)_{t\geq 0}$ be of the following form: $\mathcal F_t = \mathcal F_{\lfloor t\rfloor}$ for each $t\geq 0$, $T$ be as in Theorem \ref{thm:analougeofProp3.5.4}. Then there exists an $(\mathcal F_n)_{n\geq 0}$-predictable sequence $(a_{n})_{n\geq 0}$ with values in $[-1,1]$ such that $\Delta (TM)_n = a_n \Delta M_n$ a.s.\ for each $n\geq 0$ for any $M \in \mathcal M_X^p$.
\end{proposition}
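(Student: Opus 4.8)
The plan is to reduce the statement to the discrete martingale transform setting, where the corresponding result is classical (see \cite[Proposition 3.5.4]{HNVW1} and \cite{Burk81}), by extracting the predictable sequence $(a_n)_{n\geq 0}$ directly from the weak differential subordination hypothesis applied on the discrete filtration $(\mathcal F_n)_{n\geq 0}$. Since $\mathcal F_t = \mathcal F_{\lfloor t\rfloor}$, every $X$-valued martingale $M\in\mathcal M_X^p$ is a discrete martingale in disguise, so it suffices to work with the difference sequences $\Delta M_n = M_n - M_{n-1}$ and $\Delta(TM)_n = (TM)_n - (TM)_{n-1}$. First I would record that, because $TM\stackrel{w}\ll M$, for every $x^*\in X^*$ we have $\langle TM, x^*\rangle \ll \langle M, x^*\rangle$, and since $\langle M,x^*\rangle$ is a discrete real-valued martingale with $[\langle M,x^*\rangle]_n - [\langle M,x^*\rangle]_{n-1} = |\Delta\langle M_n,x^*\rangle|^2$, the differential subordination forces $|\Delta\langle (TM)_n, x^*\rangle| \leq |\Delta\langle M_n, x^*\rangle|$ a.s.\ for every $n$. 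Combined with the initial condition $|\langle (TM)_0,x^*\rangle|\leq |\langle M_0,x^*\rangle|$, this says (exactly as in \cite{Y17FourUMD}) that for a.e.\ $\omega$ there exists a scalar $a_n(\omega)\in[-1,1]$ with $\Delta(TM)_n(\omega) = a_n(\omega)\,\Delta M_n(\omega)$ --- uniquely determined on $\{\Delta M_n\neq 0\}$, and freely set to $0$ otherwise.

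The two issues to settle are then: (1) that a single $[-1,1]$-valued sequence $(a_n)$ works simultaneously for \emph{all} $M\in\mathcal M_X^p$ (not one sequence per martingale), and (2) that this sequence is $(\mathcal F_n)_{n\geq 0}$-predictable, i.e.\ $a_n$ is $\mathcal F_{n-1}$-measurable. For (1) I would exploit linearity of $T$: fix $n$, and for a pair of martingales $M, M'$ that agree up to time $n-1$ (so $\Delta M_n, \Delta M'_n$ are two $\mathcal F_n$-measurable increments issuing from the same $\mathcal F_{n-1}$-data), apply the scalar relation to $M$, to $M'$, and to $M+M'$; comparing $a_n^{M+M'}(\Delta M_n + \Delta M'_n) = a_n^M \Delta M_n + a_n^{M'}\Delta M'_n$ on the overlap of their supports pins down a common value. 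To make this an honest construction, since $X$ is separable I would take a countable dense set $\{x_k\}\subset X$ and, for each $k$, build the martingale $M^{(k,n)}$ which is constant equal to $0$ up to time $n-1$ and then has increment at time $n$ equal to $x_k - \mathbb E(x_k\mid\mathcal F_{n-1})$ at step $n$ (and is then frozen); define $a_n$ on the event where this increment is nonzero via $\Delta(TM^{(k,n)})_n = a_n\,\Delta M^{(k,n)}_n$. Glue over $k$ using separability: the events $\{\Delta M^{(k,n)}_n\neq 0\}$ cover, up to null sets, everything that can arise as a martingale increment, and the values agree on overlaps by the linearity argument above; set $a_n := 0$ off this union.

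For predictability (2), I would argue that on $\{\Delta M_n\neq 0\}$ one can solve for $a_n$ by pairing against a functional: choose (measurably in $\omega$, using a measurable selection / the separability of $X$) some $x^*$ with $\langle\Delta M_n,x^*\rangle\neq 0$ on the relevant event, and write $a_n = \langle\Delta(TM)_n,x^*\rangle / \langle\Delta M_n, x^*\rangle$. Both $\Delta(TM)_n$ and $\Delta M_n$ are $\mathcal F_n$-measurable, so a priori $a_n$ is $\mathcal F_n$-measurable; promoting this to $\mathcal F_{n-1}$-measurability is where I expect the \textbf{main obstacle}. The mechanism should be exactly the one used in the discrete theory: given the $\mathcal F_{n-1}$-data, the ``generic'' increment $\Delta M_n$ (e.g.\ the one built from a dense $x_k$ as above) can be taken to have a prescribed, essentially arbitrary, $\mathcal F_n$-measurable direction, while the ratio $a_n$ is forced to be the same for all such directions by the linearity comparison; hence $a_n$ cannot depend on the direction and must be a function of the $\mathcal F_{n-1}$-data alone. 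I would formalize this by noting that if $a_n$ were not $\mathcal F_{n-1}$-measurable, one could find two increments with identical $\mathcal F_{n-1}$-conditional laws but producing different $a_n$ on an event of positive probability, contradicting the linearity/consistency relation obtained by feeding their sum into $T$. This last step requires care with conditioning and with the measurable selection of the test functionals $x^*$, but it is exactly parallel to \cite[Proposition 3.5.4]{HNVW1}, so I would cite that argument for the details once the correspondence with discrete transforms is in place.
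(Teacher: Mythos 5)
Your overall strategy matches the paper's: extract pointwise multipliers from weak differential subordination via \cite{Y17FourUMD}, use linearity of $T$ plus separability to glue them into one sequence, then upgrade adaptedness to predictability. However, there are two genuine gaps. First, your family of test martingales is vacuous as written: if $x_k\in X$ is a deterministic point, then $x_k-\mathbb E(x_k\mid\mathcal F_{n-1})=0$, so every $M^{(k,n)}$ is identically zero and the events $\{\Delta M^{(k,n)}_n\neq 0\}$ cover nothing. The paper instead takes a countable dense subset $(\xi^m)$ of $L^p(\Omega;X)$ (using \cite[Proposition 1.2.29]{HNVW1}) and sets $M^m_t=\mathbb E(\xi^m\mid\mathcal F_t)$; it then does not need the supports to cover all possible increments, because a general $M$ is handled by an $L^p$-limiting argument ($\Delta M^{m_k}_n\to\Delta M_n$ and $\Delta(TM^{m_k})_n\to\Delta(TM)_n$, whence $\Delta(TM)_n=a_n\Delta M_n$). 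Relatedly, your consistency step ``comparing on the overlap of supports pins down a common value'' only works when $\Delta M_n$ and $\Delta M'_n$ are linearly independent at $\omega$; the paper handles the collinear case by testing $T(c_1M^{m_1}+c_2M^{m_2})$ against a dense family of pairs $(c_1,c_2)$ chosen nearly orthogonal to $(\langle\Delta M^{m_1}_n,x^*\rangle,\langle\Delta M^{m_2}_n,x^*\rangle)$, which forces $a^{m_1}_n=a^{m_2}_n$ from the subordination bound.

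The more serious gap is predictability. Your proposed mechanism (two increments with the same $\mathcal F_{n-1}$-conditional law yielding different $a_n$) does not produce a contradiction: $a_n$ is by construction a single function of $\omega$, independent of which martingale you feed in, and the question is purely whether that function is measurable with respect to the smaller $\sigma$-algebra; nothing in the linearity/consistency relation addresses this. Citing \cite[Proposition 3.5.4]{HNVW1} does not close the gap either, since there the multiplier sequence is \emph{assumed} predictable. The paper's actual argument is short but essential: if $a_N$ were not $\mathcal F_{N-1}$-measurable, take $x\neq 0$ and the martingale with the single increment $\Delta M_N=(a_N-\mathbb E(a_N\mid\mathcal F_{N-1}))x$ (bounded, hence in $\mathcal M^p_X$); then $\Delta(TM)_N=a_N(a_N-\mathbb E(a_N\mid\mathcal F_{N-1}))x$ must satisfy $\mathbb E(\Delta(TM)_N\mid\mathcal F_{N-1})=0$, which computes to $x\,\mathbb E\bigl((a_N-\mathbb E(a_N\mid\mathcal F_{N-1}))^2\mid\mathcal F_{N-1}\bigr)=0$ and forces $a_N=\mathbb E(a_N\mid\mathcal F_{N-1})$ a.s. This use of the martingale property of $TM$ against a specially designed increment is the missing idea in your proposal.
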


\begin{proof}
 Let $\mathbb G = (\mathcal G_n)_{n\geq 0} := (\mathcal F_n)_{n\geq 0}$ be a discrete filtration. Due to the construction of $\mathbb F$ and the fact that $\mathbb G$ is discrete we have that any $\mathbb F$-martingale $M$ is in fact discrete (i.e.\ $M_t = M_{\lfloor t\rfloor}$ a.s.\ for each $t\geq 0$), hence any martingale has accessible jumps, so by Lemma~\ref{lem:WDS0appearsearlierforN} it is sufficient to use the fact that $TM \stackrel{w}\ll M$ for any $M \in \mathcal M^p_X$ in order to apply Theorem~\ref{thm:analougeofProp3.5.4}. Let us show that there exists a $\mathbb G$-adapted \mbox{$[-1,1]$-va}\-lued sequence $(a_n)_{n\geq 1}$ such that $\Delta (TM)_n = a_n \Delta M_n$ a.s.\ for each $n\geq 0$.
 Since $X$ is separable, $L^p(\Omega;X)$ is separable by \cite[Proposition 1.2.29]{HNVW1}. Let $(\xi^m)_{m\geq 1}$ be a dense subset of $L^p(\Omega;X)$. For each $m\geq 1$ we construct a martingale $M^m$ in the following way: $M^m_t := \mathbb E (\xi^m|\mathcal F_t)$, $t\geq 0$. Then we have that $((TM)^m_n)_{n\geq 0}\stackrel{w}\ll (M^m_n)_{n\geq 0}$ for each $m \geq 1$, so by \cite{Y17FourUMD} for each $m\geq 1$ there exists a $\mathbb G$-adapted $[-1,1]$-valued sequence $(a_n^m)_{n\geq 0}$ such that $\Delta (TM^m)_n = a_n^m \Delta M^m_n$ for each $n\geq 0$. Let us show that for each $m_1\neq m_2$ and $n\geq 0$ we have that
 \begin{equation}\label{eq:a^m_1_n=a^m_2_n}
    a^{m_1}_n = a^{m_2}_n \;\;\; \text{a.s. on} \;\; A_n^{m_1,m_2},
 \end{equation}
 where $A_n^{m_1,m_2} := \{\Delta M^{m_1}_n \neq 0\}\cap\{\Delta M^{m_2}_n \neq 0\}$.
Let $\bigl((c_1^k, c_2^k)\bigr)_{k\geq 1}$ be a dense subset of $\mathbb R^2$ such that for each $k\geq 1$
$$
c_1^k \Delta M^{m_1}_n + c_2^k\Delta M^{m_2}_n \neq 0\;\;\; \text{a.s. on} \;\; A_n^{m_1,m_2}.
$$
Then $T(c_1^k M^{m_1} + c_2^k M^{m_2}) \stackrel{w}\ll c_1^k M^{m_1} + c_2^k M^{m_2}$ for each $k\geq 1$, and hence by the linearity of $T$ we have that for each $k\geq 1$ a.s.\ $c_1^k a^{m_1}_n\Delta M^{m_1}_n + c_2^k a^{m_2}_n\Delta M^{m_2}_n$ and $c_1^k \Delta M^{m_1}_n + c_2^k \Delta M^{m_2}_n$ are collinear vectors in $X$, and 
\[
 \biggl| \frac{c_1^k a^{m_1}_n\Delta M^{m_1}_n + c_2^k a^{m_2}_n\Delta M^{m_2}_n}{c_1^k \Delta M^{m_1}_n + c_2^k \Delta M^{m_2}_n} \biggr| \leq 1\;\;\; \text{a.s. on} \;\; A_n^{m_1,m_2},
\]
by the weak differential subordination.
Therefore we can redefine $A_n^{m_1,m_2}$ up to a~negligible set in the following way:
\begin{align*}
  A_n^{m_1,m_2} := A_n^{m_1,m_2}&\bigcap_{k\geq 1} \{c_1^k \Delta M^{m_1}_n + c_2^k\Delta M^{m_2}_n \neq 0\}\\
  &\bigcap_{k\geq 1}\biggl\{\biggl| \frac{c_1^k a^{m_1}_n\Delta M^{m_1}_n + c_2^k a^{m_2}_n\Delta M^{m_2}_n}{c_1^k \Delta M^{m_1}_n + c_2^k \Delta M^{m_2}_n} \biggr| \leq 1\biggr\}.
\end{align*}

Let us now fix any $\omega \in A_n^{m_1,m_2}$ and $\eps>0$. Let $x^*\in X^*$ be such that $\langle \Delta M^{m_1}_n(\omega), x^*\rangle \neq 0$ and $\langle \Delta M^{m_2}_n(\omega), x^*\rangle \neq 0$ (such $x^*$ exists by the Hahn-Banach theorem and the definition of $A_n^{m_1,m_2}$). Then we can find $k\geq 1$ such that 
\begin{equation}\label{eq:goodc_1^kc_2^k}
 0<\frac{\langle c_1^k \Delta M^{m_1}_n(\omega) + c_2^k \Delta M^{m_2}_n(\omega), x^*\rangle}{|c_1^k| + |c_2^k|}<\eps
\end{equation}
since $\bigl((c_1^k, c_2^k)\bigr)_{k\geq 1}$ is dense in $\mathbb R^2$ (i.e.\ $k\geq 0$ such that $(c_1^k, c_2^k)$ is almost orthogonal to $(\langle  \Delta M^{m_1}_n(\omega)x^*\rangle,\langle\Delta M^{m_2}_n(\omega), x^*\rangle)$). But on the other hand (we will omit $\omega$ for the convenience of the reader)
\begin{align}\label{eq:1geqalmostinfty-1}
 1&\geq \biggl| \frac{c_1^k a^{m_1}_n\Delta M^{m_1}_n + c_2^k a^{m_2}_n\Delta M^{m_2}_n}{c_1^k \Delta M^{m_1}_n + c_2^k \Delta M^{m_2}_n} \biggr| = \frac{|\langle c_1^k a^{m_1}_n\Delta M^{m_1}_n + c_2^k a^{m_2}_n\Delta M^{m_2}_n, x^*\rangle|}{\langle c_1^k \Delta M^{m_1}_n + c_2^k \Delta M^{m_2}_n, x^*\rangle}\nonumber\\
 &=\frac{|\langle c_2^k (a^{m_2}_n - a^{m_1}_n)\Delta M^{m_2}_n, x^*\rangle|-|\langle c_1^k a^{m_1}_n\Delta M^{m_1}_n + c_2^k a^{m_1}_n\Delta M^{m_2}_n, x^*\rangle|}{\langle c_1^k \Delta M^{m_1}_n + c_2^k \Delta M^{m_2}_n, x^*\rangle}\\
 &\stackrel{(*)}\geq |a^{m_2}_n - a^{m_1}_n| |\langle\Delta M^{m_2}_n, x^*\rangle| \frac{1}{\eps} - 1,\nonumber
\end{align}
where $(*)$ holds by the triangle inequality, \eqref{eq:goodc_1^kc_2^k}, and the fact that $|a^{m_1}_n|\leq 1$. Since $\eps$ was arbitrary, \eqref{eq:1geqalmostinfty-1} holds true if and only if $a^{m_2}_n(\omega) - a^{m_1}_n(\omega) = 0$. Now since $\omega\in A_n^{m_1,m_2}$ was arbitrary, $a^{m_1}_n = a^{m_2}_n$ on $A_n^{m_1,m_2}$.

Now we define for each $n\geq 0$ and $m\geq 1$:
\begin{align*}
 B_n^1 &= \{\Delta M_n^1 \neq 0\},\\
 B_n^m &= \{\Delta M_n^m \neq 0\}\setminus B_n^{m-1}, \;\;\; m\geq 2,\\
 B_n^0 &= \Omega \setminus \bigcup_{m\geq 1} B_n^m,
\end{align*}
and define $a_n$ in the following way:
\begin{equation}\label{eq:a_nconstructionforproponProposition 3.5.4}
 \begin{split}
   a_n(\omega) &:= a_n^m, \;\;\; \omega \in B_n^m,\; m\geq 1,\\
 a_n(\omega) &:= 0, \;\;\; \omega \in B_0^m.
 \end{split}
\end{equation}
Then by \eqref{eq:a^m_1_n=a^m_2_n} $a_n = a_n^m$ a.s.\ on $\{\Delta M_n^m \neq 0\}$ for all $m\geq 1$. Therefore $\Delta (TM^m)_n = a_n \Delta M^m_n$ a.s.\ for all $m\geq 1$. Now let $M$ be a general $L^p$-martingale. Let $(M^{m_k})_{k\geq 1}$ be a sequence which converges to $M$ in $\mathcal M_{X}^p$. Fix $n\geq 0$. Then by \cite[Corollary~2.6.30]{HNVW1} $\Delta M_n^{m_k}$ converges to $\Delta M_n$ in $L^p(\Omega; X)$ as $k\to \infty$, so by boundedness of $a_n$ we have that $a_n \Delta M_n^{m_k} \to a_n \Delta M_n$ in $L^p(\Omega; X)$. On the other hand by boundedness of $T$ and by \cite[Corollary~2.6.30]{HNVW1}
\[
 \lim_{k\to \infty}a_n \Delta M_m^{n_k} = \lim_{k\to \infty} \Delta (TM_n^{m_k})_n = \Delta(TM)_n,
\]
where the limit is taken in $L^p(\Omega; X)$. Hence $\Delta(TM)_n = a_n \Delta M_n$ a.s.

It follows from \eqref{eq:a_nconstructionforproponProposition 3.5.4} and \cite{Y17FourUMD} that $(a_n)_{n\geq 0}$ is $\mathbb G$-adapted and bounded by $1$. Now let us show that $(a_n)_{n\geq0}$ is $\mathbb G$-predictable. Assume the opposite. Then there exists $N\geq 0$ such that $a_N$ is $\mathcal F_N$-measurable, but not $\mathcal F_{N-1}$-measurable (here we set $\mathcal F_{-1}$ to be the $\sigma$-algebra generated by all negligible sets). Fix $x\in X\setminus \{0\}$. Then we can construct the following $L^p$-martingale $M:\mathbb R_+ \times \Omega \to X$: $\Delta M_n = 0$ if $n\neq N$ and $\Delta M_N = (a_N-\mathbb E(a_N|\mathcal F_{N-1}))x$. This is an $L^p$-martingale since by the triangle inequality and \cite[Theorem 34.2]{Bill95}
\begin{align*}
 \|a_N-\mathbb E(a_N|\mathcal F_{N-1})\|_{\infty} &\leq \|a_N\|_{\infty} + \|\mathbb E(a_N|\mathcal F_{N-1})\|_{\infty} \leq 1 + \|\mathbb E(|a_N||\mathcal F_{N-1})\|_{\infty}\\
 &\leq 1 + \|\mathbb E(1|\mathcal F_{N-1})\|_{\infty}\leq 2.
\end{align*}
Then we have that $\Delta(TM)_N = a_N(a_N-\mathbb E(a_N|\mathcal F_{N-1}))x$, and since $TM$ is a martingale,
\begin{align*}
 0=\mathbb E (\Delta(TM)_N |\mathcal F_{N-1}) &= \mathbb E (a_N(a_N-\mathbb E(a_N|\mathcal F_{N-1}))x|\mathcal F_{N-1})\\
 &=x \mathbb E (a_N^2-a_N\mathbb E(a_N|\mathcal F_{N-1})|\mathcal F_{N-1})\\
 &= x\Bigl(\mathbb E (a_N^2|\mathcal F_{N-1}) - \bigl(\mathbb E (a_N|\mathcal F_{N-1})\bigr)^2\Bigr)\\
 &= x\mathbb E \Bigl(\bigl(a_N - \mathbb E (a_N|\mathcal F_{N-1})\bigr)^2\Big|\mathcal F_{N-1}\Bigr),
\end{align*}
so since $x\neq 0$ and the fact that $\bigl(a_N - \mathbb E (a_N|\mathcal F_{N-1})\bigr)^2$ is nonnegative we get that $a_N - \mathbb E (a_N|\mathcal F_{N-1}) = 0$ a.s., hence $a_N$ is $\mathcal F_{N-1}$-measurable.
\end{proof}

\begin{remark}
 One can extend Proposition \ref{prop:discretefiltrationopProp3.5.4} to the case of a Banach space $X$ being over the scalar field $\mathbb C$. The point is that because of the structure of the filtration $\mathbb F$ any $\mathbb F$-martingale is purely discontinuous, so one can extend the definition of weak differential subordination in the way presented in \cite{Y17UMD^A}; namely, $N \stackrel{w}\ll M$ if $|\langle\Delta N_t, x^*\rangle|\leq |\langle\Delta M_t, x^*\rangle|$ a.s.\ for all $t\geq 0$ and $x^* \in X^*$. Then by applying the same proof one can show that the sequence $(a_{n})_{n\geq 0}$ from Proposition~\ref{prop:discretefiltrationopProp3.5.4} exists and is still $(\mathcal F_n)_{n\geq 0}$-predictable, but it takes values in the unit disk $\mathbb D := \{\lambda \in \mathbb C: |\lambda|
 \leq 1\}$.
\end{remark}

\subsection{Sufficiency of the UMD property}\label{subsec:suffofUMD}

Now we will consider two examples of an operator $T$ from Theorem \ref{thm:analougeofProp3.5.4}, which will provide us with the Meyer-Yoeurp and the Yoeurp decompositions of any UMD space-valued local martingale.

\begin{theorem}[Meyer-Yoeurp decomposition of local martingales]\label{thm:MYdecoflocmartsufofUMD}
 Let $X$ be a~UMD Banach space, $M:\mathbb R_+\times \Omega \to X$ be a local martingale. Then there exist unique local martingales $M^c, M^d:\mathbb R_+ \times \Omega \to X$ such that $M^c$ is continuous, $M^d$ is purely discontinuous, $M^c_0=0$, and $M = M^c + M^d$. Moreover, for any $\lambda >0$ and $t\geq 0$
 \begin{equation}\label{eq:L^1inftyforthm:MYdecoflocmartsufofUMD}
  \begin{split}
   \lambda \mathbb P ((M^c)^*_t >\lambda) &\lesssim_X \mathbb E \|M_t\|,\\
    \lambda \mathbb P ((M^d)^*_t >\lambda) &\lesssim_X \mathbb E \|M_t\|.
  \end{split}
 \end{equation}
\end{theorem}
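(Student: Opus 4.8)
The plan is to reduce the general local martingale case to the $L^p$-martingale case handled by Proposition~\ref{prop:candecofUMDspacevalL^pmart} and Theorem~\ref{thm:analougeofProp3.5.4}, then use a localization and density argument to pass from $L^p$-martingales back to general local martingales. First I would fix $p\in(1,\infty)$ (say $p=2$) and consider the operator $T^c\colon \mathcal M_X^{p,\rm loc}\to\mathcal M_X^{p,\rm loc}$ sending $M$ to its continuous part $M^c$; by Proposition~\ref{prop:candecofUMDspacevalL^pmart} this is well-defined on $L^p$-martingales and bounded on $\mathcal M_X^p$ with $\|T^c\|_{\mathcal L(\mathcal M_X^p)}\le\beta_{p,X}$. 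One must check the hypotheses of Theorem~\ref{thm:analougeofProp3.5.4}: that $T^cM = M^c \stackrel{w}\ll M$, which holds since $\langle M^c,x^*\rangle$ is the continuous part of $\langle M,x^*\rangle$ and differential subordination of the continuous part to the whole martingale is classical (and recorded in \cite{Y17MartDec}); and the support condition \eqref{eq:starconditionforthm:analougeofProp3.5.4}, namely $M^*_\infty=0\Rightarrow (M^c)^*_\infty=0$, which is immediate because $M^c$ is obtained by projecting onto the continuous part and $M\equiv M_0$ forces $M^c\equiv 0$ (recall $M^c_0=0$). Theorem~\ref{thm:analougeofProp3.5.4} then yields $\lambda\mathbb P((M^c)^*_\infty>\lambda)\lesssim_X\mathbb E\|M_\infty\|$ for $M\in\mathcal M_X^p$, and the same for $M^d=M-M^c$ by the triangle inequality in weak $L^1$.

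The second step is to remove the $L^p$-integrability assumption. Given an arbitrary local martingale $M$, pick a localizing sequence $(\tau_k)$ so that each $M^{\tau_k}$ is an $L^1$-martingale, and approximate $M^{\tau_k}_\infty$ in $L^1(\Omega;X)$ by bounded (hence $L^p$) random variables, producing $L^p$-martingales $M^{(k,j)}$ with $M^{(k,j)}_\infty\to M^{\tau_k}_\infty$ in $L^1$. Applying Step 1 to each $M^{(k,j)}$ gives the Meyer-Yoeurp splittings $M^{(k,j)}=M^{(k,j),c}+M^{(k,j),d}$ with uniform weak $L^1$-bounds. The weak $L^1$-estimate on the \emph{difference} of the continuous parts, $\lambda\mathbb P((M^{(k,j),c}-M^{(k,j'),c})^*_t>\lambda)\lesssim_X\mathbb E\|M^{(k,j)}_t-M^{(k,j')}_t\|$, shows $(M^{(k,j),c})_j$ is Cauchy in $\mathcal M_X^{1,\infty}$; by Proposition~\ref{prop:Xvaluedcadlagareclosedunderucp} (completeness of càdlàg adapted processes under ucp) it converges in ucp to some càdlàg process, which is a continuous local martingale since ucp-limits of continuous martingales are continuous and the martingale property survives (after a further localization if needed). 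Call the limit $M^{\tau_k,c}$; set $M^{\tau_k,d}:=M^{\tau_k}-M^{\tau_k,c}$, which is purely discontinuous because pure discontinuity is characterized by $NM$ being a martingale for bounded continuous $N$ (Proposition~\ref{thm:purdiscorthtoanycont1}) and this property passes to ucp limits. Consistency in $k$ (i.e.\ $M^{\tau_{k+1},c}$ restricted by $\tau_k$ equals $M^{\tau_k,c}$) follows from uniqueness of the splitting for $L^1$-martingales, so the pieces glue to a global decomposition $M=M^c+M^d$, and the weak $L^1$-bounds \eqref{eq:L^1inftyforthm:MYdecoflocmartsufofUMD} are inherited in the limit via Fatou's lemma applied to $\mathbb P((M^c)^*_t>\lambda)\le\liminf_j\mathbb P((M^{(k,j),c})^*_t>\lambda)$ (being slightly careful with strict vs.\ non-strict inequalities, e.g.\ replacing $\lambda$ by $\lambda-\varepsilon$).

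Uniqueness of $(M^c,M^d)$ is Remark~\ref{rem:MY==weakMY}: for any $x^*$, $\langle M^c,x^*\rangle$ is continuous and $\langle M^d,x^*\rangle$ purely discontinuous, so the pair is determined by the scalar Meyer-Yoeurp decomposition of $\langle M,x^*\rangle$, which is unique by \cite[Theorem 26.14]{Kal}.

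The main obstacle I anticipate is the limiting argument in Step 2: establishing that the ucp-limit of the approximating continuous martingales is genuinely a (local) martingale and genuinely continuous, and that the purely-discontinuous part really is purely discontinuous in the limit. Convergence in $\mathcal M_X^{1,\infty}$ is weaker than $L^1$-convergence, so one cannot directly pass the martingale property through a conditional expectation; the fix is to use that the weak $L^1$-bound on differences, together with the $L^1$-convergence of the \emph{sums} $M^{(k,j)}$, controls the continuous parts well enough to extract a subsequence converging ucp, and then to invoke Proposition~\ref{prop:Xvaluedcadlagareclosedunderucp} and the fact that uniform limits of continuous paths are continuous (Lemma~\ref{lem:supofcontfuncwithlimitiscont} is the relevant tool for handling the sup). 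Showing the limit is a local martingale may require localizing once more and using that $\mathcal M_X^1$ convergence of the full martingales gives the martingale property on a localized scale; I would handle this exactly as in the passage from $L^p$ to $L^1$ in \cite{Y17MartDec}.
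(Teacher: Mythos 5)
Your proposal follows essentially the same route as the paper: apply Theorem \ref{thm:analougeofProp3.5.4} to the operator $M\mapsto M^c$ (bounded on $\mathcal M^p_X$ by Proposition \ref{prop:candecofUMDspacevalL^pmart}), deduce that the continuous parts of an $L^p$-approximating sequence form a Cauchy sequence in the ucp topology, pass to the limit, verify continuity and pure discontinuity of the two pieces after a further localization, and obtain \eqref{eq:L^1inftyforthm:MYdecoflocmartsufofUMD} by a limiting argument in weak $L^1$. The technical obstacle you flag at the end (transferring the martingale property and pure discontinuity through the ucp limit) is resolved in the paper exactly as you suggest, by introducing stopping times built from $\sup_n\|M^{c,n}\|$ via Lemma \ref{lem:supofcontfuncwithlimitiscont} and invoking Lemma \ref{lem:pdmartingalesisaclosedsbsinL^1} on the localized $L^1$-convergent pieces.
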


For the proof we will need the following lemma.

\begin{lemma}\label{lem:pdmartingalesisaclosedsbsinL^1}
 Let $M:\mathbb R_+ \times \Omega \to X$ be an $L^1$-martingale, $(M^n)_{n\geq 1}$ be a sequence of purely discontinuous $X$-valued $L^1$-martingales such that $M^n_{\infty}\to M_{\infty}$ in $L^1(\Omega;X)$. Then $M$ is purely discontinuous.
\end{lemma}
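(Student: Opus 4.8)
The plan is to reduce the statement to a one-dimensional fact via duality and then use the characterization of purely discontinuous martingales from Proposition~\ref{thm:purdiscorthtoanycont1}. So first I would fix an arbitrary continuous bounded real-valued martingale $N$ with $N_0=0$ and an arbitrary $x^*\in X^*$, and aim to show that $\langle M, x^*\rangle N$ is a martingale; since $x^*$ is arbitrary, Lemma~\ref{lemma:contpuredisczero}-style reasoning (or, more directly, the definition in Subsection~\ref{subsec:pdmart}) then gives that $M$ is purely discontinuous, because $\langle M, x^*\rangle$ will be purely discontinuous for every $x^*$. Actually, the cleanest route: show directly that $\langle M, x^*\rangle$ is a real-valued purely discontinuous local martingale for each $x^*$, by realizing it as an $L^1$-limit of the purely discontinuous real martingales $\langle M^n, x^*\rangle$ and invoking the scalar version of exactly this lemma — but since the scalar case is itself a special case, it is just as efficient to argue everything at the $X$-valued level using Proposition~\ref{thm:purdiscorthtoanycont1}.

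The key steps, in order, are as follows. First, for fixed bounded continuous $N$ with $N_0=0$, each $M^n N$ is a martingale by Proposition~\ref{thm:purdiscorthtoanycont1}. Second, I want to pass to the limit: for $0\le s\le t$ and $A\in\mathcal F_s$ one has $\mathbb E(\mathbf 1_A N_t M^n_t)=\mathbb E(\mathbf 1_A N_s M^n_s)$, and I would like to let $n\to\infty$ on both sides. The convergence $M^n_\infty\to M_\infty$ in $L^1(\Omega;X)$ gives, via conditional expectations and \cite[Corollary 2.6.30]{HNVW1}, that $M^n_t\to M_t$ in $L^1(\Omega;X)$ for each fixed $t$; since $N$ is bounded, $\mathbf 1_A N_t M^n_t\to \mathbf 1_A N_t M_t$ in $L^1(\Omega;X)$, and likewise at time $s$. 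Hence $\mathbb E(\mathbf 1_A N_t M_t)=\mathbb E(\mathbf 1_A N_s M_s)$, i.e.\ $MN$ is a martingale. Third, since $N$ was an arbitrary bounded continuous martingale starting at $0$, Proposition~\ref{thm:purdiscorthtoanycont1} yields that $M$ is purely discontinuous. One small point to address: Proposition~\ref{thm:purdiscorthtoanycont1} is stated for martingales $M$ (not merely local), which is fine here since $M$ is an honest $L^1$-martingale by hypothesis; and $MN$ being a genuine (not just local) martingale follows from the uniform integrability supplied by boundedness of $N$ together with $M_t\in L^1$.

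I do not expect a serious obstacle here; the only mild subtlety is making sure the limiting argument is carried out at a fixed finite time $t$ rather than at $t=\infty$, since $N$ need not converge and $N_t M_t$ is only controlled in $L^1$ on bounded time intervals — but this is exactly why the criterion in Proposition~\ref{thm:purdiscorthtoanycont1} is phrased through the martingale property of $MN$ rather than through behaviour at infinity, so the argument closes cleanly. If one instead prefers the fully scalar reduction, the same three steps applied to $\langle M^n,x^*\rangle\to\langle M,x^*\rangle$ in $L^1(\Omega)$ together with the scalar case of the criterion work verbatim, and the vector-valued conclusion then follows from the definition of purely discontinuous $X$-valued martingales.
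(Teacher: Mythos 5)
Your proposal is correct and follows essentially the same route as the paper's proof: both fix a bounded continuous real-valued martingale $N$ with $N_0=0$, use Proposition~\ref{thm:purdiscorthtoanycont1} to get that each $M^nN$ is a martingale, pass to the limit using $M^n_t\to M_t$ in $L^1(\Omega;X)$ together with the boundedness of $N_t$, and then invoke Proposition~\ref{thm:purdiscorthtoanycont1} again to conclude. The only cosmetic difference is that you test the martingale property against sets $A\in\mathcal F_s$ while the paper passes the limit through the conditional expectation operator directly; these are equivalent.
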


\begin{proof}
 Without loss of generality $M_0 = 0$ and $M^n_0=0$ a.s.\ for each $n\geq 1$. By Proposition \ref{thm:purdiscorthtoanycont1} it is sufficient to check that $MN$ is a martingale for any bounded continuous real-valued martingale $N$ with $N_0=0$ a.s. Fix such $N$. Then due to Proposition~\ref{thm:purdiscorthtoanycont1} $M^n N$ is a martingale for each $n\geq 0$. Moreover, since $N_t$ is bounded for each $t\geq 0$, $(M^n N)_t \to (MN)_t$ in $L^1(\Omega;X)$. Therefore by the boundedness of a conditional expectation operator (see \cite[Corollary 2.6.30]{HNVW1}) for each $0\leq s\leq t$
 \begin{align*}
   \mathbb E((MN)_t|\mathcal F_s) = \mathbb E \bigl(\lim_{n\to \infty}(M^nN)_t|\mathcal F_s\bigr) &= \lim_{n\to \infty}\mathbb E((M^nN)_t|\mathcal F_s)\\
   &=\lim_{n\to \infty}(M^nN)_s = (MN)_s.
 \end{align*}
Hence, $MN$ is a martingale. Since $N$ was arbitrary, $M$ is a purely discontinuous martingale.
\end{proof}

\begin{proof}[Proof of Theorem \ref{thm:MYdecoflocmartsufofUMD}]
 By a stopping time argument we can assume that $M$ is an $L^1$-martingale. Fix $p\in(1,\infty)$. Let $(M^n)_{n\geq 1}$ be a sequence of $X$-valued $L^p$-martingales such that $M^n_{\infty}\to M_{\infty}$ in $L^1(\Omega; X)$. Without loss of generality assume that $\mathbb E\|M_{\infty}-M^n_{\infty}\|<\frac{1}{2^{n+1}}$ for each $n\geq 1$. Let $T\in \mathcal L(\mathcal M^p_{X})$ be such that $T$ maps an $L^p$-martingale $N:\mathbb R_+ \times \Omega \to X$ to its continuous part $N^c$ (such an operator exists and bounded by Proposition \ref{prop:candecofUMDspacevalL^pmart}). For each $n\geq 1$ we denote $TM^n$ by $M^{n,c}$. Then we know that by Theorem \ref{thm:analougeofProp3.5.4} for each $m\geq n\geq 1$ and any $K>0$
 \begin{equation}\label{eq:estfromMYdecM^nM^m}
  \mathbb P((M^{n,c}-M^{m,c})^*_{\infty}>K)\lesssim_{p,X}\frac{1}{K}\mathbb E \|M^{n,c}_{\infty}-M^{m,c}_{\infty}\|\leq \frac{1}{2^{n}K},
 \end{equation}
hence $(M^{n,c})_{n\geq 1}$ is a Cauchy sequence in the ucp topology by \eqref{eq:ucpconvergencehow}. Notice that all the $M^{n,c}$'s are continuous local martingales, which are complete in the ucp topology (see \cite[pp.\ 7--8]{VY2016} and Lemma \ref{lem:supofcontfuncwithlimitiscont}). Hence there exists a local martingale $M^c:\mathbb R_+ \times \Omega \to X$ which is the limit of $(M^{n,c})_{n\geq 1}$ in the ucp topology. Now it is sufficient to prove that $M^c_0=0$ and that $\langle M-M^c, x^*\rangle$ is a purely discontinuous local martingale for any $x^* \in X^*$ in order to show that $M^c$ is the desired continuous local martingale. Firstly, $M^c_0 = \mathbb P-\lim_{n\to \infty}M^{n,c}_0 =0$ since $M^c$ is the limit of $(M^{n,c})_{n\geq 1}$ in the ucp topology and since $M^{n,c}_0 =0$ a.s.\ for each $n\geq 1$. Secondly, since $M^{n,c}\to M^c$ in the ucp topology and $M^n\to M$ in $L^1(\Omega; X)$, $\langle M^n-M^{n,c}, x^*\rangle\to \langle M-M^c, x^*\rangle$ in the ucp topology for each fixed $x^* \in X^*$. Without loss of generality set that $\mathbb E\|M_{\infty}\|, \mathbb E \|M^n_{\infty}\|\leq 1$ for each $n\geq 1$. Also by choosing a subsequence we can assume that $M^{c,n}\to M^{c}$ a.s.\ uniformly on compacts. Therefore by Lemma \ref{lem:supofcontfuncwithlimitiscont} the process $t\mapsto \sup_{0\leq s\leq t}\sup_n \|M^{c,n}\|$ exists and continuous, and for each $m\geq 1$ we can define a stopping time $\tau_m$ in the following way
$$
\tau_m := \inf\bigl\{t\geq 0: \sup_{0\leq s\leq t}\sup_n \|M^{c,n}\| \geq m\bigr\}.
$$
Notice that a.s.\ $\tau_m \to \infty$ as $m\to \infty$. First show that $\langle (M-M^c)^{\tau_m}, x^*\rangle$ is purely discontinuous for each $m\geq 1$. Note that $(M^{c,n})^{\tau_m}_{\infty} \to (M^c)^{\tau_m}_{\infty}$ and $(M^n)^{\tau_m}_{\infty} \to M^{\tau_m}_{\infty}$ in $L^1(\Omega; X)$ as $n\to \infty$. Therefore 
$$
\langle (M^n-M^{c,n})^{\tau_m}, x^*\rangle \to\langle (M-M^c)^{\tau_m}, x^*\rangle
$$
in $L^1(\Omega)$, so by Lemma \ref{lem:pdmartingalesisaclosedsbsinL^1} $\langle (M-M^c)^{\tau_m}, x^*\rangle$ is purely discontinuous. Notice that by letting $m$ to infinity we get that $\langle M-M^c, x^*\rangle$ is a purely discontinuous local martingale for any $x^* \in X^*$, hence $M-M^c$ is a purely discontinuous local martingale.

The uniqueness of the decomposition follows from Remark \ref{rem:MY==weakMY}, while \eqref{eq:L^1inftyforthm:MYdecoflocmartsufofUMD} holds due to the limiting argument, \eqref{eq:estfromMYdecM^nM^m}, and the completeness of $L^{1,\infty}$-spaces provided by (1.1.11) and Theorem 1.4.11 in \cite{GrafCl}.
\end{proof}

Let us turn to the Yoeurp decomposition.

\begin{theorem}[Yoeurp decomposition of local martingales]\label{thm:YdecoflocmartsufofUMD}
 Let $X$ be a UMD Banach space, $M^d:\mathbb R_+\times \Omega \to X$ be a purely discontinuous local martingale. Then there exist unique purely discontinuous local martingales $M^q, M^a:\mathbb R_+ \times \Omega \to X$ such that $M^q$ is quasi-left continuous, $M^a$ has accessible jumps, $M^q_0=0$, and $M^d = M^q + M^a$. Moreover, for any $\lambda >0$ and $t\geq 0$
 \begin{equation}\label{eq:L^1inftyforthm:YdecoflocmartsufofUMD}
  \begin{split}
   \lambda \mathbb P ((M^q)^*_t >\lambda) &\lesssim_X \mathbb E \|M^d_t\|,\\
    \lambda \mathbb P ((M^a)^*_t >\lambda) &\lesssim_X \mathbb E \|M^d_t\|.
  \end{split}
 \end{equation}
\end{theorem}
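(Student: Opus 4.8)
The plan is to follow the scheme of the proof of Theorem~\ref{thm:MYdecoflocmartsufofUMD} almost verbatim, replacing the single operator ``continuous part'' by the two bounded operators $T_q\colon N\mapsto N^q$ and $T_a\colon N\mapsto N^a$. First I would reduce, by a stopping time argument, to the case that $M^d$ is an $L^1$-martingale, fix $p\in(1,\infty)$, and pick $X$-valued $L^p$-martingales $(M^n)_{n\ge 1}$ with $\mathbb E\|M^d_\infty-M^n_\infty\|<2^{-n-1}$. By Proposition~\ref{prop:candecofUMDspacevalL^pmart} each $M^n$ has a canonical decomposition $M^n=M^{n,c}+M^{n,q}+M^{n,a}$, and the maps $T_c\colon N\mapsto N^c$, $T_q$, $T_a$ are bounded on $\mathcal M^p_X$. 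Since the vector-valued canonical decomposition is coordinatewise --- for each $x^*$ the processes $\langle N^c,x^*\rangle$, $\langle N^q,x^*\rangle$, $\langle N^a,x^*\rangle$ are the continuous, the quasi-left continuous purely discontinuous, and the accessible-jump purely discontinuous parts of $\langle N,x^*\rangle$ (Remark~\ref{rem:MY==weakMY}, Remark~\ref{rem:Y==weakY} and \cite{Y17MartDec}) --- the three scalar parts have pairwise vanishing quadratic covariations, so $[\langle N,x^*\rangle]=[\langle N^c,x^*\rangle]+[\langle N^q,x^*\rangle]+[\langle N^a,x^*\rangle]$; together with $N^q_0=0$ this gives $T_qN\stackrel{w}\ll N$ and $T_aN\stackrel{w}\ll N$, whence \eqref{eq:starconditionforthm:analougeofProp3.5.4} holds automatically. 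Thus Theorem~\ref{thm:analougeofProp3.5.4} applies to $T_q$ and $T_a$.

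Exactly as in \eqref{eq:estfromMYdecM^nM^m}, Theorem~\ref{thm:analougeofProp3.5.4} applied to $T_c$, $T_q$, $T_a$ shows that each of $(M^{n,c})_n$, $(M^{n,q})_n$, $(M^{n,a})_n$ is Cauchy in the ucp topology, so by Proposition~\ref{prop:Xvaluedcadlagareclosedunderucp} they have c\`adl\`ag adapted limits $M^c$, $M^q$, $M^a$, with $M^c$ a continuous local martingale (continuous local martingales being ucp-complete, cf.\ the proof of Theorem~\ref{thm:MYdecoflocmartsufofUMD}). I would then pass to a subsequence along which $M^n,M^{n,c},M^{n,q},M^{n,a}$ all converge a.s.\ uniformly on compacts, and introduce localizing stopping times $\tau_m\nearrow\infty$ making the stopped sequences uniformly integrable, so that $M^q$ and $M^a$ are local martingales and $M^{n,q}+M^{n,a}=M^n-M^{n,c}\to M^q+M^a$ in the ucp topology. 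Since also $M^{n,c}\to M^c$ and $M^n\to M^d$ in the ucp topology, $M^d-M^c=M^q+M^a$; applying Lemma~\ref{lem:pdmartingalesisaclosedsbsinL^1} coordinatewise on each $[0,\tau_m]$ shows $M^c,M^q,M^a$ are purely discontinuous, so $M^c=M^c_0=\mathbb P\text{-}\lim_n M^{n,c}_0=0$ by Lemma~\ref{lemma:contpuredisczero}; thus $M^d=M^q+M^a$ and $M^q_0=\mathbb P\text{-}\lim_n M^{n,q}_0=0$.

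For the quasi-left continuity of $M^q$ and the accessibility of the jumps of $M^a$ I would use that a.s.\ uniform convergence on $[0,\tau_m]$ forces $M^{n,q}_{\sigma-}\to M^q_{\sigma-}$, while $L^1$-continuity of conditional expectations forces $M^{n,q}_{\sigma}\to M^q_{\sigma}$, for every stopping time $\sigma$ on $\{\sigma\le\tau_m\}$; hence $\Delta M^{n,q}_\sigma\to\Delta M^q_\sigma$ there, and similarly for $M^{n,a}$. Taking $\sigma$ predictable, $\Delta M^{n,q}_\sigma=0$ for all $n$, so $\Delta M^q_\sigma=0$ on $\{\sigma\le\tau_m\}$ for all $m$, i.e.\ a.s.; taking $\sigma$ totally inaccessible gives $\Delta M^a_\sigma=0$ a.s.\ in the same way. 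Uniqueness is Remark~\ref{rem:Y==weakY}, and \eqref{eq:L^1inftyforthm:YdecoflocmartsufofUMD} follows by passing to the limit in the weak bounds of Theorem~\ref{thm:analougeofProp3.5.4} for $T_qM^n$ and $T_aM^n$ and using completeness of the weak $L^1$ space, exactly as at the end of the proof of Theorem~\ref{thm:MYdecoflocmartsufofUMD}.

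The hard part will be the localization that promotes the ucp limits $M^q$, $M^a$ to local martingales. In Theorem~\ref{thm:MYdecoflocmartsufofUMD} this rested on $t\mapsto\sup_{s\le t}\sup_n\|M^{n,c}_s\|$ being \emph{continuous} (Lemma~\ref{lem:supofcontfuncwithlimitiscont}), so that stopping at its first passage of level $m$ leaves the stopped processes bounded by $m$; for the jumpy $M^{n,q}$, $M^{n,a}$ the analogous sup-process is only c\`adl\`ag and a jump may overshoot the level, so one must instead work with predictable stopping times and Lemma~\ref{lem:predtaupreservemartingales} (stopping strictly before $\tau_m$), or otherwise arrange a uniform control of both the left limits and the jumps of $M^{n,q}$, $M^{n,a}$. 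Once this is in place, the remaining points --- coordinatewise-ness of the canonical decomposition, weak differential subordination of $T_q$ and $T_a$, and convergence of jumps at fixed stopping times --- are routine.
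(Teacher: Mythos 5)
Your overall strategy --- approximate $M^d$ in $L^1$ by $L^p$-martingales, decompose those via Proposition \ref{prop:candecofUMDspacevalL^pmart}, and apply Theorem \ref{thm:analougeofProp3.5.4} to the transforms $T_q$, $T_a$ to get ucp-Cauchy sequences whose limits should be the desired parts --- is indeed the skeleton of the paper's argument. But the step you yourself flag as ``the hard part'', promoting the ucp limits $M^q$, $M^a$ to locally integrable local martingales, is exactly where the Yoeurp case departs from the Meyer--Yoeurp template, and it cannot be fixed by a smarter choice of stopping times for a \emph{generic} approximating sequence. For an arbitrary $L^1$-approximation $(M^n)$ there is no uniform control on the jumps of $M^{n,q}$, so the first-passage times of $\sup_n\|M^{n,q}\|$ give stopped processes whose overshoot has no integrability at all, and first-passage times of a c\`adl\`ag process are in general not predictable, so Lemma \ref{lem:predtaupreservemartingales} is not available either. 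The paper resolves this by redesigning the approximating sequence rather than the localization: one first stops $M^d$ at $\tau=\inf\{t:\|M^d_t\|>\tfrac12\}$, so that there is at most one jump of norm exceeding $1$; one then defines $M^{d,n}$ by keeping the pre-$\tau$ part, truncating that single large jump at level $n$, adding back the predictable compensator $\int x\mathbf 1_{\|x\|>n}\ud\nu^{M^d}$ of the discarded piece, and stopping strictly before the predictable time at which this compensator exceeds $1$ (see \eqref{eq:defofM^d,nforYoeurpdec}). This produces $L^\infty$-martingales converging to $M^d_\infty$ in $L^1$ whose jumps are dominated by $\|\Delta M^d_\tau\|+1$, an integrable random variable; that domination is precisely what yields $\|(M^q)^{\rho_l}_t\|\le l+1+\|\Delta M^d_\tau\|$ for $\rho_l=\inf\{t:\|M^q_t\|\ge l\}$ and lets the dominated-convergence/localization argument close. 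A second localization over $\inf\{t:\|M^d_t\|\ge n/2\}$, glued using uniqueness of the scalar Yoeurp decomposition, then removes the initial stopping. Without some construction of this kind your proof does not close.

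A secondary issue: the hypothesis \eqref{eq:starconditionforthm:analougeofProp3.5.4} does \emph{not} follow ``automatically'' from $T_qN\stackrel{w}\ll N$. Weak differential subordination only gives $\{[\langle N,x^*\rangle]_\infty=0\}\subset\{[\langle T_qN,x^*\rangle]_\infty=0\}$, and for a purely discontinuous quasi-left continuous martingale the event $\{[\,\cdot\,]_\infty=0\}$ can be strictly larger than $\{(\cdot)^*_\infty=0\}$ (a stopped compensated Poisson process, restricted to the event of no jumps, is the standard example). The paper establishes \eqref{eq:starconditionforthm:analougeofProp3.5.4} for $T_q$ and $T_a$ via the pathwise identifications of Lemmas \ref{lem:starstaffvsqvofajmart}--\ref{lem:WDS0appearsearlierforN}, which you would need to invoke explicitly rather than deduce from subordination alone.
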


For the proof we will need the following lemmas.

\begin{lemma}\label{lem:starstaffvsqvofajmart}
 Let $M:\mathbb R_+ \times \Omega \to \mathbb R$ be a local martingale with accessible jumps, $M_0=0$ a.s. Then $\{M^*_{\infty} = 0\} = \{[M]_{\infty} = 0\}$ up to a negligible set.
\end{lemma}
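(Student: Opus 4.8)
\textbf{Proof plan for Lemma \ref{lem:starstaffvsqvofajmart}.}

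The plan is to prove the two inclusions $\{M^*_{\infty}=0\}\subset\{[M]_{\infty}=0\}$ and $\{[M]_{\infty}=0\}\subset\{M^*_{\infty}=0\}$ separately. The inclusion $\{M^*_{\infty}=0\}\subset\{[M]_{\infty}=0\}$ is the easy direction: on the event $\{M^*_{\infty}=0\}$ we have $M_t=0$ for all $t\geq 0$ a.s. on that event (since $M_0=0$ and $\sup_{t}|M_t|=0$), hence all the increments vanish and so does the quadratic variation by definition \eqref{eq:defquadvar}. Some care is needed because $M^*_{\infty}=0$ is not literally the same pointwise statement as ``$M_t=0$ for all $t$,'' but since $M$ is c\`adl\`ag these agree up to a null set, and on that common event $[M]_{\infty}=0$.

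The substantive direction is $\{[M]_{\infty}=0\}\subset\{M^*_{\infty}=0\}$. Here the key point is that $M$ is purely discontinuous with \emph{accessible} jumps, so $M$ has no continuous martingale part and its jumps occur only along a countable family of predictable stopping times. Concretely, since $[M]$ is then a pure-jump process with $[M]_t=\sum_{0\leq s\leq t}(\Delta M_s)^2$, the event $\{[M]_{\infty}=0\}$ forces $\Delta M_s=0$ for all $s$ (on that event, a.s.). I would first reduce to the bounded case by a stopping-time/localization argument. Then, because $M$ has accessible jumps, there is a sequence of predictable stopping times $(\sigma_k)_{k\geq 1}$ with disjoint graphs that exhausts the jumps of $M$, and one can write $M_t=\sum_k \Delta M_{\sigma_k}\mathbf 1_{[\sigma_k,\infty)}(t)-(\text{compensator})$; since $M$ is purely discontinuous with accessible jumps, the ``continuous-in-time'' compensator piece must itself be trivial (a predictable local martingale starting at zero is constant, by the argument used in the proof of Lemma \ref{lem:M=intwrtcompranmeas} via \cite[Theorem 4]{Kr16}). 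Thus, up to the localization, $M$ is determined by its jumps, and $\{[M]_{\infty}=0\}$ (i.e. all $\Delta M_{\sigma_k}=0$) gives $M\equiv M_0=0$, whence $M^*_{\infty}=0$ on that event.

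The main obstacle I anticipate is handling the localization correctly when intersecting with the event $\{[M]_{\infty}=0\}$: one must choose a reducing sequence $(\tau_n)$ of stopping times with $\tau_n\uparrow\infty$ a.s. and verify that the identity ``$\{[M]_{\infty}=0\}=\{M^*_{\infty}=0\}$ up to null sets'' for the bounded martingales $M^{\tau_n}$ passes to the limit. This works because $\{[M^{\tau_n}]_{\infty}=0\}=\{[M]_{\tau_n}=0\}\uparrow\{[M]_{\infty}=0\}$ and similarly $\{(M^{\tau_n})^*_{\infty}=0\}=\{M^*_{\tau_n}=0\}\uparrow\{M^*_{\infty}=0\}$, using monotonicity of $t\mapsto [M]_t$ and $t\mapsto M^*_t$; the equalities of the events at each level $n$ then yield the equality of the increasing unions. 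A secondary, more technical point is the clean justification that a purely discontinuous local martingale with accessible jumps and vanishing jumps is a.s. constant — but this is precisely the combination of ``purely discontinuous + predictable $\Rightarrow$ continuous'' together with Lemma \ref{lemma:contpuredisczero}, exactly as exploited in the proof of Lemma \ref{lem:M=intwrtcompranmeas}.
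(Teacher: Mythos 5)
Your proposal contains a genuine gap: you have silently strengthened the hypothesis. The lemma assumes only that $M$ is a local martingale \emph{with accessible jumps}, i.e.\ $\Delta M_\tau=0$ for every totally inaccessible stopping time $\tau$; it does \emph{not} assume that $M$ is purely discontinuous. A continuous local martingale (e.g.\ a stopped Brownian motion) trivially has accessible jumps, and the lemma must cover it — indeed, in the paper the lemma is later applied to $N=M^c+M^a$, which has a nontrivial continuous part. Consequently your key identity $[M]_t=\sum_{0\le s\le t}(\Delta M_s)^2$ is false in this generality (for Brownian motion the left side is $t$ and the right side is $0$), and the whole substantive inclusion $\{[M]_\infty=0\}\subset\{M^*_\infty=0\}$ as you argue it only treats the purely discontinuous case. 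The event $\{[M]_\infty=0\}$ does force all jumps to vanish, but it does not by your argument force the continuous martingale part to vanish.

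The repair is exactly what the paper does: write the (real-valued, hence always available) canonical decomposition $M=M^c+M^q+M^a$, observe that $M^q=0$ because the jumps are accessible, handle $M^c$ by the standard fact that a continuous local martingale started at $0$ satisfies $\{(M^c)^*_\infty=0\}=\{[M^c]_\infty=0\}$ (\cite[Exercise 17.3]{Kal}), handle $M^a$ by your jump argument (the paper reduces to the first jump time $\tau=\inf\{t:\Delta M^a_t\neq 0\}$ and invokes \cite[Theoreme (1-6).3]{Yoe76} to get $M^a_t=\Delta M^a_\tau\mathbf 1_{\tau\le t}$, which is the clean version of your ``sum of jumps minus trivial compensator'' step), and then combine the two via $\{[M]_\infty=0\}=\{[M^c]_\infty=0\}\cap\{[M^a]_\infty=0\}$ from \cite[Corollary 26.16]{Kal}. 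Your localization scheme and the easy inclusion are fine, and your treatment of the jump part is essentially the paper's treatment of $M^a$; what is missing is the entire continuous component of the argument.
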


\begin{proof}
 Let $M=M^c + M^q + M^a$ be the canonical decomposition of $M$ (see Subsection \ref{subsec:decompositionsofmartingales}). Then $M^q=0$ since $M$ has accessible jumps. By \cite[Exercise 17.3]{Kal} $\{(M^c)^*_{\infty} = 0\} = \{[M^c]_{\infty} = 0\}$ up to a negligible set. Let us show the same for $M^a$. Let $\tau:= \inf\{t\geq 0: \Delta M^a_t \neq 0\}$ be a stopping time. Notice that a.s.\
 \begin{align*}
  \{\tau<\infty\} &\subset \Bigl\{\sum_{t\geq 0}|\Delta M^a_t| > 0\Bigr\} \subset \{(M^a)^*_{\infty} > 0\},\\
  \{\tau<\infty\} &\subset  \Bigl\{\sum_{t\geq 0}|\Delta M^a_t|^2 > 0\Bigr\} = \{[M^a]_{\infty} > 0\},
 \end{align*}
so we can redefine $M^a := (M^a)^{\tau}$. By the definition of $\tau$ we have that for each $t\geq 0$ a.s.\
$\sum_{0\leq s\leq t }|\Delta M^a_s| = |\Delta M^a_{\tau}| \mathbf 1_{\tau \leq t}$, hence by \cite[Theoreme (1-6).3]{Yoe76} a.s.
\begin{equation}\label{eq:Yforforaccjumps}
 M^a_t = \Delta M^a_{\tau} \mathbf 1_{\tau \leq t},\;\;\; t\geq 0.
\end{equation}
Therefore since $[M^a]_t = |\Delta M^a_{\tau}|^2 \mathbf 1_{\tau \leq t}$ we have that $\{(M^a)^*_{\infty} = 0\} = \{[M^a]_{\infty} = 0\}$ up to a negligible set.

Let us now show the desired. First notice that by \cite[Corollary 26.16]{Kal} a.s.\
\begin{equation}\label{eq:[M]infty=0thesameas[M^c]infty=0[M^a]infty=0}
  \{[M]_{\infty} = 0\} =  \{[M^c]_{\infty} + [M^a]_{\infty} = 0\} = \{[M^c]_{\infty} = 0\}\cap\{[M^a]_{\infty} = 0\}.
\end{equation}
On the other hand a.s.\
\begin{align*}
 \{M^*_{\infty} = 0\} &= \{M^*_{\infty} = 0\}\cap\{\Delta M_{t} = 0\;\forall t\geq 0\} \stackrel{(i)}= \{M^*_{\infty} = 0\}\cap\{(M^a)_{\infty}^* = 0\}\\
 &\stackrel{(ii)}=\{(M^c)^*_{\infty} = 0\}\cap\{(M^a)_{\infty}^* = 0\} \stackrel{(iii)}= \{[M^c]_{\infty} = 0\}\cap\{[M^a]_{\infty} = 0\}\\
 &\stackrel{(iv)}=\{[M]_{\infty} = 0\},
\end{align*}
where $(i)$ holds by \eqref{eq:Yforforaccjumps}, $(ii)$ follows from the fact that $M^c = M-M^a$, $(iii)$ follows from the first half of the proof, and finally $(iv)$ follows from \eqref{eq:[M]infty=0thesameas[M^c]infty=0[M^a]infty=0}.
\end{proof}

\begin{lemma}\label{lem:candecforsratprocessesandinfty}
 Let $M:\mathbb R_+ \times \Omega \to \mathbb R$ be a local martingale, $M = M^c + M^q + M^a$ be the canonical decomposition. Then up to a negligible set 
 \begin{equation}\label{eq:lem:candecforsratprocessesandinfty}
  \{M^*_{\infty} = 0\} = \{(M^c)^*_{\infty} = 0\}\cap\{(M^q)^*_{\infty} = 0\}\cap\{(M^a)^*_{\infty} = 0\}.
 \end{equation}
\end{lemma}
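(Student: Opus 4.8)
The plan is to prove the two inclusions in \eqref{eq:lem:candecforsratprocessesandinfty} separately, the inclusion "$\supseteq$" being the trivial one: since $M = M^c + M^q + M^a$ as c\`adl\`ag processes a.s., on any $\omega$ where $(M^c)^*_{\infty}$, $(M^q)^*_{\infty}$ and $(M^a)^*_{\infty}$ all vanish the path $M_{\cdot}(\omega)$ is identically zero, so $M^*_{\infty}(\omega)=0$.

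For "$\subseteq$" I would first pass to quadratic variations. On $\{M^*_{\infty}=0\}$ the path of $M$ vanishes identically, hence $[M]_{\infty}=0$ and $M_0=0$; the latter forces $M^a_0 = M_0 = 0$ on this event, since $M^c_0=M^q_0=0$ by definition of the canonical decomposition. By \cite[Corollary 26.16]{Kal} one has a.s. $[M]_{\infty}=[M^c]_{\infty}+[M^q]_{\infty}+[M^a]_{\infty}$ with all three summands nonnegative (this rests on the pairwise orthogonality of the continuous, quasi-left continuous and accessible parts), so a.s. on $\{M^*_{\infty}=0\}$ both $[M^c]_{\infty}=0$ and $[M^a]_{\infty}=0$. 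Now convert back: $M^c$ is continuous with $M^c_0=0$, so \cite[Exercise 17.3]{Kal} gives $\{[M^c]_{\infty}=0\}=\{(M^c)^*_{\infty}=0\}$ up to a negligible set, hence $\{M^*_{\infty}=0\}\subseteq\{(M^c)^*_{\infty}=0\}$. For $M^a$ I would invoke Lemma \ref{lem:starstaffvsqvofajmart}, but since that lemma requires the martingale to start at $0$ while a priori only $M^a_0=M_0$ is known, I would pass to $A:=\{M_0=0\}\in\mathcal F_0$: the process $\one_A M^a$ is again a local martingale with accessible jumps, it starts at $0$, and $[\one_A M^a]=\one_A[M^a]$; Lemma \ref{lem:starstaffvsqvofajmart} then yields $\{(\one_A M^a)^*_{\infty}=0\}=\{[\one_A M^a]_{\infty}=0\}$ up to a negligible set, and since $\{M^*_{\infty}=0\}\subseteq A\cap\{[M^a]_{\infty}=0\}$ and $\one_A M^a = M^a$ on $A$, we get $\{M^*_{\infty}=0\}\subseteq\{(M^a)^*_{\infty}=0\}$.

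Finally, the quasi-left continuous part comes for free: a.s. on $\{M^*_{\infty}=0\}$ we have now shown that the paths of $M$, $M^c$ and $M^a$ all vanish identically, so from $M^q = M - M^c - M^a$ the path of $M^q$ vanishes identically too, i.e. $(M^q)^*_{\infty}=0$ there. Combining the three inclusions gives "$\subseteq$". I expect no serious obstacle here: the only delicate point is the bookkeeping caused by $M^a_0=M_0$ being possibly nonzero, which is exactly why one localises on $A=\{M_0=0\}$ instead of applying Lemma \ref{lem:starstaffvsqvofajmart} directly, and the observation that the $M^q$ case needs no separate argument once the other two are in hand; everything else is a repackaging of \cite[Corollary 26.16]{Kal}, \cite[Exercise 17.3]{Kal} and Lemma \ref{lem:starstaffvsqvofajmart}.
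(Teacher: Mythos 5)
Your proof is correct and follows essentially the same route as the paper's: pass to quadratic variations, use the orthogonality of the three parts via \cite[Corollary 26.16]{Kal} to split $[M]_\infty$ into nonnegative summands, convert back with \cite[Exercise 17.3]{Kal} and Lemma~\ref{lem:starstaffvsqvofajmart}, and recover the $M^q$ statement by subtraction. The only differences are cosmetic: the paper bundles $N:=M^c+M^a$ into a single local martingale with accessible jumps and applies Lemma~\ref{lem:starstaffvsqvofajmart} once to $N$, whereas you treat $M^c$ and $M^a$ separately, and your localization on $\{M_0=0\}$ makes explicit an initial-value point that the paper leaves implicit.
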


\begin{proof}
Let $N := M^c+M^a$. First notice that by Lemma \ref{lem:starstaffvsqvofajmart} and \cite[Corollary~26.16]{Kal} a.s.\
\begin{equation}\label{eq:PthatNM^cM^a=0}
 \begin{split}
   \{N^*_{\infty} = 0\} = \{[N]_{\infty} = 0\} &= \{[M^c]_{\infty} + [M^a]_{\infty} = 0\}\\
 &=  \{[M^c]_{\infty}=0\} \cap\{ [M^a]_{\infty} = 0\}\\
 &= \{(M^c)^*_{\infty} = 0\}\cap\{(M^a)^*_{\infty} = 0\}.
 \end{split}
\end{equation}
Let $\tau:= \inf\{t\geq 0: \Delta M_t \neq 0\}$ be a stopping time. Then a.s.\
\[
 \{\tau<\infty\} \subset  \{M^*_{\infty}>0\} \subset \{N^*_{\infty}>0\}\cup\{(M^q)^*_{\infty}>0\}
\]
since $M = N + M^d$. Let $A = \{M_{\infty}^* = 0\}\subset \Omega$. Then $[M]_{\infty} = [N+ M^q]_{\infty}=0$ a.s.\ on $A$, and consequently $[N]_{\infty}=0$ a.s.\ on $A$ by \cite[Corollary 26.16]{Kal}. Therefore by Lemma \ref{lem:starstaffvsqvofajmart} $N^*_{\infty}=0$ a.s.\ on $A$, so $(M^q)^*_{\infty}=0$ a.s.\ on $A$, and therefore by \eqref{eq:PthatNM^cM^a=0}
\begin{align*}
  \{M_{\infty}^* = 0\} = A &\subset \{N^*_{\infty}=0\}\cap \{(M^q)^*_{\infty}=0\}\\
  &= \{(M^c)^*_{\infty}=0\}\cap \{(M^q)^*_{\infty}=0\}\cap \{(M^a)^*_{\infty}=0\}.
\end{align*}
The converse inclusion follows from the fact that $M = N + M^q$ and \eqref{eq:PthatNM^cM^a=0}.
\end{proof}

\begin{lemma}\label{lem:WDS0appearsearlierforN}
 Let $X$ be a Banach space, $M, N:\mathbb R_+ \times \Omega \to X$ be local martingales such that $N$ has accessible jumps and $N \stackrel{w}\ll M$. Then
 \begin{equation}\label{eq:lem:onstoppingtimeforWDS}
  \mathbb P(N^*_t>0)\leq \mathbb P(M^*_t>0),\;\;\; t\geq 0.
 \end{equation}
\end{lemma}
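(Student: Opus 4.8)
Throughout I would work with c\`adl\`ag versions of $M$ and $N$ (which exist, as recalled before Lemma~\ref{lem:predtaupreservemartingales}). Fix $t\geq 0$ and put $A:=\{M^*_t=0\}\in\mathcal F_t$. Since $\mathbb P(N^*_t>0)=1-\mathbb P(N^*_t=0)$ and likewise for $M$, the inequality \eqref{eq:lem:onstoppingtimeforWDS} is equivalent to the inclusion $A\subseteq\{N^*_t=0\}$ up to a negligible set, so that is what I would prove. The plan is to first establish the scalar analogue for each fixed functional by combining weak differential subordination with the real-valued Lemma~\ref{lem:starstaffvsqvofajmart}, and then to patch over a countable norming family.

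First I would fix $x^*\in X^*$ and show that $\langle N_s,x^*\rangle=0$ for all $s\in[0,t]$, a.s.\ on $A$. On $A$ we have $\langle M_s,x^*\rangle=0$ for every $s\leq t$, hence $[\langle M,x^*\rangle]_t=0$ on $A$ and $\langle M_0,x^*\rangle=0$ on $A$. Since $N\stackrel{w}\ll M$, the real-valued local martingale $\langle N,x^*\rangle$ is differentially subordinate to $\langle M,x^*\rangle$, so a.s.\ on $A$ we get $\langle N_0,x^*\rangle=0$ and (taking $s=0$ in the definition of differential subordination) $[\langle N,x^*\rangle]_t=0$. Now consider $L:=\langle N-N_0,x^*\rangle$ and its stopping $L^t$: the jumps of $N^t$ occur at times $\leq t$ and coincide there with jumps of $N$, so $\Delta L^t_\tau=\langle\Delta N^t_\tau,x^*\rangle=0$ a.s.\ for every totally inaccessible stopping time $\tau$ because $N$ has accessible jumps, i.e.\ $L^t$ is a real-valued local martingale with accessible jumps and $L^t_0=0$. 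Moreover $[L^t]_\infty=[\langle N,x^*\rangle]_t$, since passing to a stopped version and subtracting the $\mathcal F_0$-measurable constant $\langle N_0,x^*\rangle$ affects neither the partition sums defining the quadratic variation nor its value at $t$. By Lemma~\ref{lem:starstaffvsqvofajmart} applied to $L^t$, the events $\{(L^t)^*_\infty=0\}=\{\sup_{0\leq s\leq t}|\langle N_s-N_0,x^*\rangle|=0\}$ and $\{[\langle N,x^*\rangle]_t=0\}$ agree up to a negligible set. Combining this with $[\langle N,x^*\rangle]_t=0$ and $\langle N_0,x^*\rangle=0$ on $A$ yields $\sup_{0\leq s\leq t}|\langle N_s,x^*\rangle|=0$ a.s.\ on $A$.

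To upgrade this coordinatewise vanishing to $N^*_t=0$ a.s.\ on $A$, I would use separability. Since $N$ is strongly measurable with c\`adl\`ag paths, the closed linear span $X_0$ of the essential ranges of $N_q$, $q\in\mathbb Q$ with $q\geq 0$, is a separable closed subspace of $X$, and by right-continuity $N_s\in X_0$ a.s.\ for every $s\geq 0$. Choose a countable set $D\subseteq X^*$ norming $X_0$. Applying the previous paragraph to each $x^*\in D$ and intersecting the countably many exceptional negligible sets, we obtain that a.s.\ on $A$ one has $\langle N_s,x^*\rangle=0$ for all $s\leq t$ and all $x^*\in D$, whence $\|N_s\|=\sup_{x^*\in D}|\langle N_s,x^*\rangle|=0$ for all $s\leq t$. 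Thus $A\subseteq\{N^*_t=0\}$ up to a negligible set, which is \eqref{eq:lem:onstoppingtimeforWDS}.

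I expect the main obstacle to be exactly this last passage from scalar to vector: weak differential subordination and Lemma~\ref{lem:starstaffvsqvofajmart} only control each $\langle N,x^*\rangle$ on its own $x^*$-dependent exceptional set, so one genuinely needs a countable norming family together with the c\`adl\`ag/separable-range structure of $N$ to make the patching legitimate. A minor but necessary check along the way is that both composing with a functional $x^*$ and stopping at time $t$ preserve the accessible-jumps property, so that Lemma~\ref{lem:starstaffvsqvofajmart} is genuinely applicable to $L^t$.
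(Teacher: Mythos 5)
Your proof is correct and follows essentially the same route as the paper's: reduce to the scalar case via a countable separating/norming family, use differential subordination to transfer $[\langle M,x^*\rangle]_t=0$ into $[\langle N,x^*\rangle]_t=0$ on $\{M^*_t=0\}$, and invoke Lemma~\ref{lem:starstaffvsqvofajmart} to convert the vanishing of the quadratic variation of the accessible-jumps martingale back into the vanishing of its running supremum. The only (harmless) deviation is that you obtain $\{M^*_t=0\}\subset\{[\langle M,x^*\rangle]_t=0\}$ directly from the partition sums defining the quadratic variation, whereas the paper routes this step through the canonical decomposition of $M$ together with Lemmas~\ref{lem:starstaffvsqvofajmart} and~\ref{lem:candecforsratprocessesandinfty}.
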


\begin{proof}
 \eqref{eq:lem:onstoppingtimeforWDS} follows from the fact that $\{M_t^* = 0\}\subset \{N_t^* = 0\}$.
 Let $(x_n^*)_{n\geq 0}\subset X^*$ be a separating set. Then up to a negligible set 
 \begin{align*}
  \{M_t^* = 0\} &= \bigcap_{n\geq 0}\{ (\langle M ,x_n^*\rangle)_t^* = 0\},\\
  \{ N_t^* = 0\} &= \bigcap_{n\geq 0}\{(\langle N ,x_n^*\rangle)_t^* = 0\},
 \end{align*}
therefore it is sufficient to consider $X = \mathbb R$. Let $M = M^c + M^d + M^a$ be the canonical decomposition of $M$ (see Subsection \ref{subsec:decompositionsofmartingales}). By Lemma \ref{lem:candecforsratprocessesandinfty} and  \eqref{eq:PthatNM^cM^a=0}
\[
 \{ M_t^* = 0\} \subset \{(M^c + M^a)_t^* = 0\}.
\]
Moreover, by Lemma \ref{lem:starstaffvsqvofajmart}
\begin{align*}
  \{\ (M^c + M^a)_t^* = 0\} &= \{[M^c + M^a]_t = 0\} \subset \{[M]_t = 0\},\\
  \{ N_t^* = 0\} &= \{ [N]_t = 0\},
 \end{align*}
and hence since $N \ll M$,
\begin{align*}
  \{ M_t^* = 0\} \subset\{ [M]_t = 0\} &\subset\{ [N]_t = 0\}=   \{ N_t^* = 0\}.
\end{align*}
\end{proof}

\begin{proof}[Proof of Theorem \ref{thm:YdecoflocmartsufofUMD}]
 Without loss of generality assume that $M^d$ is an \mbox{$L^1$-mar}\-tin\-gale and $M^d_0=0$ a.s. We will divide the proof into two steps.
 
 {\em Step 1.} Define a stopping time $\tau=\{t\geq 0: \|M^d_t\|>\frac{1}{2}\}$. In this step we assume that $M^d = (M^d)^{\tau}$ (i.e.\ the martingale stops moving after reaching $\frac{1}{2}$, in particular after the first jump of absolute value bigger than $1$). Let $\mu^M$ be the random measure defined by \eqref{eq:defofranmeasuremu^M}, $\nu^M$ be the corresponding compensator (see Subsection \ref{subsec:randommeasures}). For each $n\geq 1$ define a stopping time
 \begin{equation}\label{eq:tau_nfromthm:YdecoflocmartsufofUMD}
   \tau_n = \inf\Bigl\{t\geq 0:\int_{[0,t]\times X}\|x\|\mathbf 1_{\|x\|>n}\ud \nu^{M^d}> 1\Bigr\},
 \end{equation}
 and a process $M^{d,n}:\mathbb R_+ \times \Omega \to X$ in the following way
 \begin{equation}\label{eq:defofM^d,nforYoeurpdec}
    M^{d,n}_t=\Bigl((M^d)^{\tau-}_t + \Delta M^d_{\tau}\mathbf 1_{\|\Delta M^d_{\tau}\|\leq n}\mathbf 1_{\tau\leq t} + \int_{[0,t]\times X}x\mathbf 1_{\|x\|>n}\ud \nu^{M^d}\Bigr)^{\tau_n-},\;\; t\geq 0,
 \end{equation}
where we define $M^{\sigma-}$ for a stopping time $\sigma$ in the same way as in \eqref{eq:defofM^tau-ingen}. First of all show that $\tau_n \to \infty$ a.s.\ as $n\to \infty$. Notice that by due to Subsection \ref{subsec:randommeasures}
\begin{equation}\label{eq:bddnessofintwrtnuMd}
 \begin{split}
   \mathbb E \int_{\mathbb R_+\times X}\|x\|\mathbf 1_{\|x\|>1}\ud \nu^{M^d} &= \mathbb E\int_{\mathbb R_+\times X}\|x\|\mathbf 1_{\|x\|>1}\ud \mu^{M^d} \leq \mathbb E\|\Delta M^d_{\tau}\|\\
 &\leq \mathbb E \|M^d_{\tau}\| + \mathbb E \|M^d_{\tau-}\| \stackrel{(*)}\leq \mathbb E \|M^d_{\infty}\| + \frac 12\stackrel{(**)}<\infty,
 \end{split}
\end{equation}
where $(*)$ follows from the fact that $M_{\tau} =M_{\infty}$ and the fact that $\|M_{\tau-}\|\leq \frac{1}{2}$ a.s., and $(**)$ holds due to the fact that $M$ is an $L^1$-martingale. Therefore
\begin{align*}
 \int_{\mathbb R_+\times X}\|x\|\mathbf 1_{\|x\|>1}\ud \nu^{M^d} < \infty\;\;\;\; \text{a.s.},
\end{align*}
so by the monotone convergence theorem a.s.\
\[
 \int_{\mathbb R_+\times X}\|x\|\mathbf 1_{\|x\|>n}\ud \nu^{M^d} \to 0,\;\;\; n\to \infty,
\]
and hence $\tau_n \to\infty$ as $n\to\infty$. 

We need to show that $M^{d,n}$ is an $L^{\infty}$-martingale for each $n\geq 1$. Clearly $M^{d,n}$ is adapted and c\`adl\`ag. It is also  a local martingale since it can be rewritten in the following form:
\begin{align*}
 M^{d,n}_t=(M^d)_t^{\tau_n-} -\int_{[0,t]\times X}x\mathbf 1_{\|x\|>n}\mathbf 1_{s<\tau_n}\ud \bar{\mu}^{M^d},\;\;\; t\geq 0,
\end{align*}
where the first term is a martingale by Lemma \ref{lem:predtaupreservemartingales}, and the second term is a local martingale by Lemma \ref{lem:intwrtrmidapdlocmart} and the fact that the process $s\mapsto \mathbf 1_{s<\tau_n}$ is predictable by \cite[Theorem 25.14]{Kal} and the predictability of $\tau_n$ (the latter follows from \eqref{eq:tau_nfromthm:YdecoflocmartsufofUMD} and the predictability of $\nu^{M^d}$, see Subsection \ref{subsec:randommeasures}). Moreover, for each fixed $t\geq 0$ we have that a.s.
\begin{align*}
 \|M^{d,n}_t\| &\leq \|(M^d)_t^{\tau-\wedge \tau_n-}\| + \|\Delta M^d_{\tau}\mathbf 1_{\|\Delta M^d_{\tau}\|\leq n}\| + \int_{[0,\tau_n)\times X}\|x\|\mathbf 1_{\|x\|>n}{\nu}^{M^d}\\
 &\leq 1+n+1=n+2.
\end{align*}
(Recall that $\tau-\wedge \tau_n-:= (\tau\wedge\tau_n)-$, see \eqref{eq:tau-wedgesigma-:=(tauwedgesigma)-}).
Therefore $(M^{d,n})_{n\geq 1}$ are bounded martingales.

Now let us now show that $M^{d,n}_{\infty}\to M^d_{\infty}$ in $L^{1}(\Omega; X)$. First, $M^{d,n}_{\infty} = M^{d,n}_{\tau_n-}$ a.s., so by the triangle inequality
\begin{align*}
 \mathbb E \|M^d_{\infty} - M^{d,n}_{\infty}\| \leq \mathbb E\|M^d_{\infty} - M^d_{\tau_n-}\| + \mathbb E \|M^d_{\tau_n-} - M^{d,n}_{\tau_n-}\|.
\end{align*}
Notice that the first term vanishes as $n\to \infty$ by the fact that $\|M^d_{\infty} - M^d_{\tau_n-}\| \leq 1+\|\Delta M_{\tau}\|$ a.s., the fact that $\tau_n \to \infty$ a.s., and the dominated convergence theorem. Let us consider the second term:
\begin{align*}
 \mathbb E& \|M^d_{\tau_n-} - M^{d,n}_{\tau_n-}\|\\
 &= \mathbb E \Bigl\|M^d_{\tau_n-} - (M^d)^{\tau-}_{\tau_n-}-\Delta M^d_{\tau}\mathbf 1_{\|\Delta M^d_{\tau}\|\leq n}\mathbf 1_{\tau< \tau_n} - \int_{[0,\tau_n)\times X}x\mathbf 1_{\|x\|>n}\ud \nu^{M^d} \Bigr\|\\
 &= \mathbb E \Bigl\|\Delta M^d_{\tau}\mathbf 1_{\tau< \tau_n}-\Delta M^d_{\tau}\mathbf 1_{\|\Delta M^d_{\tau}\|\leq n}\mathbf 1_{\tau<\tau_n} - \int_{[0,\tau_n)\times X}x\mathbf 1_{\|x\|>n}\ud \nu^{M^d} \Bigr\|\\
 &= \mathbb E \Bigl\|\Delta M^d_{\tau}\mathbf 1_{\|\Delta M^d_{\tau}\|> n}\mathbf 1_{\tau< \tau_n} - \int_{[0,\tau_n)\times X}x\mathbf 1_{\|x\|>n}\ud \nu^{M^d} \Bigr\|\\
 &= \mathbb E \Bigl\|\int_{[0,\tau_n)\times X}x\mathbf 1_{\|x\|>n}\ud \mu^{M^d}  - \int_{[0,\tau_n)\times X}x\mathbf 1_{\|x\|>n}\ud \nu^{M^d} \Bigr\|\\
 &\leq \mathbb E  \int_{[0,\tau_n)\times X}\|x\|\mathbf 1_{\|x\|>n}\ud \mu^{M^d}   +  \mathbb E\int_{[0,\tau_n)\times X}\|x\|\mathbf 1_{\|x\|>n}\ud \nu^{M^d}\\
 &\stackrel{(*)}= 2\mathbb E  \int_{[0,\tau_n)\times X}\|x\|\mathbf 1_{\|x\|>n}\ud \mu^{M^d} \stackrel{(**)}= 2\mathbb E \|\Delta M^d_{\tau}\| \mathbf 1_{\|\Delta M^d_{\tau}\|>n},
\end{align*}
and the last expression vanishes as $n\to \infty$ by the monotone convergence theorem. (Notice that $(*)$ follows from the definition of a compensator and from \eqref{eq:bddnessofintwrtnuMd}, while $(**)$ follows from the fact that $\|\Delta M_t\| \geq 1$ only if $t=\tau$ by the assumptions on $M$.)

Since each of $M^{d,n}$'s is an $L^p$-martingale for each $p\in (1,\infty)$, by Proposition \ref{prop:candecofUMDspacevalL^pmart} for each $n\geq 1$ there exists the Yoeurp decomposition $M^{d,n} = M^{q,n} + M^{a,n}$ of a martingale $M^{d,n}$ into a sum of two purely discontinuous martingales $M^{q,n}, M^{a,n}:\mathbb R_+ \times \Omega \to X$ such that $M^{q,n}$ is quasi-left continuous, $M^{a,n}$ has accessible jumps, and $M^{q,n}_0=M^{a,n}_0=0$ a.s.\ (recall that $M^{d,n}_0=0$ a.s.). Fix some $p\in(1,\infty)$. Since an operator $T^q$ that maps an $L^p$-martingale $M:\mathbb R_+ \times \Omega \to X$ to its purely discontinuous quasi-left continuous part $M^q$ of the canonical decomposition is continuous on $\mathcal L(\mathcal M^p_X)$ by Proposition \ref{prop:candecofUMDspacevalL^pmart}, Theorem \ref{thm:analougeofProp3.5.4} together with Lemma \ref{lem:WDS0appearsearlierforN} yields that for each $m,n\geq 1$ and $K>0$
\begin{align*}
 \mathbb P\bigl((M^{q,n}-M^{q,m})^*_{\infty}>K\bigr)& \lesssim_{p} \frac{1}{K}\mathbb E \|M^{d,n}_{\infty} - M^{d,m}_{\infty}\|\\ 
 &\leq \frac{1}{K}(\mathbb E \|M^{d,n}_{\infty} - M^{d}_{\infty}\| + \mathbb E \|M^{d,m}_{\infty} - M^{d}_{\infty}\|),
\end{align*}
so $(M^{q,n})_{n\geq 1}$ is a Cauchy sequence in the ucp topology. By Proposition \ref{prop:Xvaluedcadlagareclosedunderucp} it has a c\`adl\`ag adapted limit. Denote this limit by $M^q$. Let us show that $M^q$ is a purely discontinuous quasi-left continuous local martingale. Let $\sigma$ be a predictable time. Then $\Delta M^{q,n}_{\sigma} = 0$ a.s., and for any $t\geq 0$ a.s.\
\begin{equation}\label{eq:DeltasarethesameofM^dandM^qamdM^a}
 \begin{split}
   \sup_{0\leq s\leq t}\|M^{q,n}_s-M^q_s\|&\geq \mathbf 1_{\sigma\leq t}\sup_{0\leq s\leq \sigma}\|M^{q,n}_s-M^q_s\|\\
 &\geq \mathbf 1_{\sigma\leq t}\bigl(\sup_{m\geq 1}\|M^{q,n}_{0\vee \sigma-\frac 1m}-M^q_{0\vee \sigma-\frac 1m}\|\vee \|M^{q,n}_{\sigma}-M^q_{\sigma}\|\bigr)\\
 &\geq \frac{1}{2} \mathbf 1_{\sigma\leq t}\bigl(\limsup_{m\geq 1}\|M^{q,n}_{0\vee \sigma-\frac 1m}-M^q_{0\vee \sigma-\frac 1m}\|+ \|M^{q,n}_{\sigma}-M^q_{\sigma}\|\bigr)\\
 &=\frac{1}{2}\mathbf 1_{\sigma\leq t}\bigl(\|M^{q,n}_{\sigma-}-M^q_{\sigma-}\|+ \|M^{q,n}_{\sigma}-M^q_{\sigma}\|\bigr)\\
 &\stackrel{(*)}\geq\frac 12\mathbf 1_{\sigma\leq t}\|M^{q,n}_{\sigma-}-M^q_{\sigma-}- M^{q,n}_{\sigma}+M^q_{\sigma}\|\\
 &\geq\frac 12\mathbf 1_{\sigma\leq t}\|\Delta M^q_{\sigma-}- \Delta M^{q,n}_{\sigma}\|=\frac 12\mathbf 1_{\sigma\leq t}\|\Delta M^q_{\sigma-}\|,
 \end{split}
\end{equation}
where $(*)$ follows from the triangle inequality.
Since 
$$
\mathbb P-\lim_{n\to \infty}\sup_{0\leq s\leq t}\|M^{q,n}_s-M^q_s\| = 0,
$$ 
we have that for each $t\geq 0$
\[
 \mathbb P-\lim_{n\to \infty}\mathbf 1_{\sigma\leq t}\|\Delta M^q_{\sigma-}\| = 0.
\]
But the expression under the limit in probability does not depend on $n$. Hence $\mathbf 1_{\sigma\leq t}\|\Delta M^q_{\sigma-}\| = 0$ a.s. By letting $t\to \infty$ we get that a.s. $\|\Delta M^q_{\sigma}\| = 0$,
and since $\sigma$ was arbitrary predictable, $M^q$ is quasi-left continuous. 

Let now $\sigma$ be a totally inaccessible stopping time. Let us show that a.s.
\begin{equation}\label{eq:tistexistofYoeurpdecqlcpart}
 \Delta{M^q_{\sigma}} = \Delta M^d_{\sigma}.
\end{equation}
First notice that for each fixed $m\geq n \geq 1$
\begin{equation}\label{eq:jumpsofM^qmarethesameasofM^d}
\begin{split}
  \Delta M^{q,m}_{\sigma}\mathbf 1_{\sigma< \tau \wedge\tau_n} &\stackrel{(*)}= \Delta M^{d,m}_{\sigma}\mathbf 1_{\sigma< \tau \wedge\tau_n} \stackrel{(**)}= \Delta M^{d}_{\sigma}\mathbf 1_{\sigma< \tau \wedge\tau_n},\\
  \Delta M^{q,m}_{\sigma}\mathbf 1_{\sigma=\tau< \tau_n}\mathbf 1_{\|\Delta M^{d}_{\tau}\|\leq n} &\stackrel{(*)}= \Delta M^{d,m}_{\sigma}\mathbf 1_{\sigma=\tau< \tau_n}\mathbf 1_{\|\Delta M^{d}_{\tau}\|\leq n} \stackrel{(**)}= \Delta M^{d}_{\sigma}\mathbf 1_{\sigma=\tau< \tau_n} \mathbf 1_{\|\Delta M^{d}_{\tau}\|\leq n},
\end{split}
\end{equation}
where $(*)$ follows from Remark \ref{rem:decompsitionspreserveDeltasRvaluedcase}, and $(**)$ follows from the definition \eqref{eq:defofM^d,nforYoeurpdec} of $M^{d,m}$ and Lemma \ref{lem:DeltaV_tau=0forpredictVandtotinactau}. Therefore by \eqref{eq:DeltasarethesameofM^dandM^qamdM^a} applied for our $\sigma$ a.s.\ for each $n\geq 1$
\begin{equation}\label{eq:jumpsofM^qarethesameasofM^d}
\begin{split}
  \Delta M^{d}_{\sigma}\mathbf 1_{\sigma<\tau\wedge \tau_n} &= \Delta M^{q}_{\sigma}\mathbf 1_{\sigma<\tau\wedge \tau_n},\\
   \Delta M^{d}_{\sigma}\mathbf 1_{\sigma=\tau< \tau_n} \mathbf 1_{\|\Delta M^{d}_{\tau}\|\leq n} &= \Delta M^{q}_{\sigma}\mathbf 1_{\sigma=\tau< \tau_n} \mathbf 1_{\|\Delta M^{d}_{\tau}\|\leq n}.
\end{split}
\end{equation}
By letting $n\to \infty$ we get \eqref{eq:tistexistofYoeurpdecqlcpart}.

Let us show that $M^q$ is locally integrable. For each $l\geq 1$ set $\rho_l := \inf\{t\geq 0: \|M^q_t\|\geq l\}$. Then a.s.\ for each $t\geq 0$
\begin{align*}
 \|(M^q)^{\rho_l}_{t}\| \leq \|(M^q)^{\rho_l}_{t-}\| + \|\Delta (M^q)^{\rho_l}_{t}\| &\leq l + \|\Delta (M^q)^{\rho_l}_{t}\| \mathbf 1_{t=\tau} + \|\Delta (M^q)^{\rho_l}_{t}\| \mathbf 1_{t<\tau}\\
 &\leq l+\|\Delta M^d_{\tau}\| + 1.
\end{align*}
Therefore 
\[
 \mathbb E \|(M^q)^{\rho_l}_{t}\| \leq l+1 + \mathbb E \|\Delta M^d_{\tau}\|< \infty,
\]
where $\mathbb E \|\Delta M_{\tau}\|<\infty$ by \eqref{eq:bddnessofintwrtnuMd}. Since $M^q$ is c\`adl\`ag, by \cite[Problem V.1]{Pollard} we have that $\rho_l\to \infty$ as $l\to \infty$, so $M^q$ is locally integrable.

Now let us show that $M^q$ is a local martingale. Let $(M^{q, n_k})_{k\geq 1}$ be a subsequence of $(M^{q, n})_{n\geq 1}$ such that $M^{q, n_k}\to M^q$ uniformly on compacts a.s.\ (existence of such a subsequence can be shown e.g.\ as in the proof of \cite[Theorem 62]{Prot}). It is sufficient to show that $M^{\rho_l\wedge\tau_{n_k}-}$ is a local martingale for each $l,k\geq 1$ since $\rho_l\to \infty$ and $\tau_{n_k}\to \infty$ a.s.\ as $l,k\to \infty$. Fix $K>0$. Then by \eqref{eq:jumpsofM^qmarethesameasofM^d} and \eqref{eq:jumpsofM^qarethesameasofM^d} for each $k\geq K$ we have that a.s.\ for each $t\geq 0$
\[
 \Delta (M^{q, n_k})^{\tau_{n_K}-\wedge \tau-}_t =  \Delta (M^{q})^{\tau_{n_K}-\wedge \tau-}_t.
\]
Therefore by Lemma \ref{lem:supofcontfuncwithlimitiscont} there exists a continuous adapted process $N:\mathbb R_+ \times \Omega \to \mathbb R_+$ such that a.s.
\[
 N_t = \sup_{k\geq K} \bigl\|(M^{q, n_k})^{\tau_{n_K}-\wedge \tau-}_t - (M^{q})^{\tau_{n_K}-\wedge \tau-}_t\bigr\|,\;\;\; t\geq 0.
\]
Now for each $j\geq 1$ define a stopping time $\sigma_j = \inf\{t\geq 0: N_t\geq j\}$. Fix $j\geq 1$. Then for each $t\geq 0$ we have that for any $k\geq K$ a.s.\
$$
\bigl\|(M^{q, n_k})^{\rho_l\wedge\tau_{n_K}-\wedge \sigma_j}_t - (M^{q})^{\rho_l\wedge\tau_{n_K}-\wedge\sigma_j}_t\bigr\|\leq j+l+2\|\Delta M^d_{\tau}\|
$$
and that $(M^{q, n_k})^{\rho_l\wedge\tau_{n_K}-\wedge \sigma_j}_t - (M^{q})^{\rho_l\wedge\tau_{n_K}-\wedge\sigma_j}_t \to 0$ a.s.\ as $k\to \infty$. Hence by the dominated convergence theory
$$
(M^{q, n_k})^{\rho_l\wedge\tau_{n_K}-\wedge \sigma_j}_t\to (M^{q})^{\rho_l\wedge\tau_{n_K}-\wedge\sigma_j}_t\;\; \text{in}\, L^1(\Omega; X)\,\text{as}\,k\to \infty.
$$
Consequently, $\bigl((M^{q})^{\rho_l\wedge\tau_{n_K}-\wedge\sigma_j}_t\bigr)_{t\geq 0}$ is an $L^1$-martingale, which is moreover purely discontinuous by Lemma \ref{lem:pdmartingalesisaclosedsbsinL^1}. By letting $l,K,j\to \infty$ we get that $M^q$ is a purely discontinuous quasi-left continuous local martingale.

$M^a$ can be constructed in the same way. The identity $M^d = M^q + M^a$ follows from the following limiting argument:
\begin{align*}
 M^{d} &= ucp-\lim_{n\to \infty} M^{d,n},\\
  M^{q} &= ucp-\lim_{n\to \infty} M^{q,n},\\
   M^{a} &= ucp-\lim_{n\to \infty} M^{a,n},
\end{align*}
and the fact that $M^{d,n} = M^{q,n} + M^{a,n}$ for each $n\geq 1$.
 
 {\em Step 2.} For a general martingale $M^d$ we construct a sequence of stopping times $\tau_n = \inf\{t\geq 0:\|M^d_t\|\geq \frac{n}{2}\}$. For each $M^{d,n}:= (M^d)^{\tau_n}$ we construct the corresponding $M^{q,n}$ by Step 1. Then for each $m\geq n\geq 1$ we get that $(M^{q,n})^{\tau_m} = M^{q,m}$ since for any $x^* \in X^*$ a.s.
 \[
  \langle (M^{q,n})^{\tau_m}, x^*\rangle = \langle M^{q,m}, x^*\rangle
 \]
due to the uniqueness of the Yoeurp decomposition in the real-valued case. Then we just set $M^q_0:=0$ and
$$
M^q_t := \sum_{n\geq 1}M^{q,n}_t \mathbf 1_{t\in (\tau_{n-1}, \tau_n]},\;\;\; t\geq 0,
$$
where $\tau_0\equiv 0$. The obtained $M^q$ will be the desired purely discontinuous quasi-left continuous local martingale. 

We can construct $M^a$ in the same way and show that then $M^d = M^q + M^a$ similarly to how it was shown in step 1.

The uniqueness of the decomposition follows from Remark \ref{rem:Y==weakY}, while \eqref{eq:L^1inftyforthm:YdecoflocmartsufofUMD} follows analogously \eqref{eq:L^1inftyforthm:MYdecoflocmartsufofUMD}.
\end{proof}

\begin{proof}[Proof of Theorem \ref{thm:candecoflocmartsufandnesofUMD} (sufficiency of UMD and \eqref{eq:L^1inftyforcor:candecoflocmartsufofUMD})]
 Sufficiency of the UMD property follows from Theorem \ref{thm:MYdecoflocmartsufofUMD} and Theorem \ref{thm:YdecoflocmartsufofUMD}, while \eqref{eq:L^1inftyforcor:candecoflocmartsufofUMD} follows in the same way as \eqref{eq:L^1inftyforthm:MYdecoflocmartsufofUMD} and \eqref{eq:L^1inftyforthm:YdecoflocmartsufofUMD}.
\end{proof}

\subsection{Necessity of the UMD property}\label{subsec:NecofUMD}

In the current subsection we show that the UMD property is necessary in Theorem \ref{thm:MYdecoflocmartsufofUMD} and Theorem \ref{thm:YdecoflocmartsufofUMD}, and hence it is necessary for the canonical decomposition of a local martingale.

\begin{theorem}\label{thm:noUMDnodec}
 Let $X$ be a Banach space that does not have the UMD property. Then there exists a filtration $\mathbb F = (\mathcal F_t)_{t\geq 0}$ and an $\mathbb F$-martingale $M:\mathbb R_+ \times \Omega \to X$ such that $M$ provides neither the Meyer-Yoeurp nor the canonical decomposition.
\end{theorem}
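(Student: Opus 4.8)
The plan is to prove the contrapositive by an explicit construction: assuming $X$ is not UMD, I will exhibit a filtration $\mathbb F$ and an $\mathbb F$-martingale $M$ on $[0,1]$ (extended to be constant for $t\geq 1$) which has no Meyer-Yoeurp decomposition. This already suffices for the whole statement, because a canonical decomposition $M=M^c+M^q+M^a$ would give the Meyer-Yoeurp decomposition $M=M^c+(M^q+M^a)$ (the sum of two purely discontinuous martingales being purely discontinuous, and $M^c_0=0$); so no Meyer-Yoeurp decomposition forces no canonical decomposition.

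The starting point is Burkholder's characterization recalled in the introduction. Since $X$ is not UMD, for every $n\geq 1$ there exist an $X$-valued discrete martingale $(f^n_k)_k$ (after the standard reduction to mean-zero Haar martingales we may assume $f^n_0=0$) and a $\{-1,1\}$-valued sequence $(a^n_k)_k$ such that the transform $g^n$, $g^n_k=\sum_{j=1}^k a^n_j(f^n_j-f^n_{j-1})$, satisfies $(g^n)^*_\infty>1$ a.s.\ while $\mathbb E\|f^n_\infty\|$ is as small as we wish. Multiplying $f^n$ by a large scalar, we may assume moreover that $(g^n)^*_\infty>n$ a.s.\ and $\mathbb E\|f^n_\infty\|<4^{-n}$.

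The core construction realizes each pair $(f^n,a^n)$ as a continuous-time martingale carried by an independent probability factor $\mathcal G^n$ and living on the time window $I_n:=[1-2^{1-n},1-2^{-n})$, which is further partitioned into consecutive subintervals $I_{n,k}$, $k=1,\dots$, one per step. On $I_{n,k}$ the martingale performs the increment $d^n_k:=f^n_k-f^n_{k-1}$, but how it does so is governed by $a^n_k$. If $a^n_k=+1$, the increment is performed \emph{continuously}: run an auxiliary Brownian motion $B^{n,k}$ over $I_{n,k}$, couple $d^n_k$ to its endpoint (possible since $\Phi(B^{n,k})$ is uniform, so $d^n_k$ can be written as a measurable function of the past of the $n$-th block and of $B^{n,k}$), and let the martingale follow $t\mapsto\mathbb E(d^n_k\mid\mathcal F_t)$ on $I_{n,k}$, which is a path-continuous martingale running from $0$ to $d^n_k$. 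If $a^n_k=-1$, the increment is a single jump of size $d^n_k$ at the right endpoint of $I_{n,k}$, a deterministic (hence predictable) time, with the realizing randomness revealed only at that instant, so that the jump has zero conditional mean given the strict past. Gluing all these pieces along the product-type filtration generated by the $\mathcal G^n$ and passing to its right-continuous augmentation yields a c\`adl\`ag $\mathbb F$-martingale $M$; the bound $\mathbb E\|M_t\|\leq\sum_m 4^{-m}+4^{-n}$ for $t\in I_n$ shows $M$ is $L^1$-bounded on $[0,1)$ and extends to a c\`adl\`ag $L^1$-martingale on $[0,1]$.

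Finally, suppose $M=M^c+M^d$ were a Meyer-Yoeurp decomposition. By its uniqueness (Remark \ref{rem:MY==weakMY}) and its compatibility with stopping, on each window $I_n$ it must coincide with the intrinsic Meyer-Yoeurp decomposition of the $n$-th block, namely $M^{n,c}:=\sum_{k:\,a^n_k=1}(\text{continuous pieces})$, which is continuous and starts at $0$, and $M^{n,d}:=\sum_{k:\,a^n_k=-1}(\text{jump pieces})$, a finite sum of single predictable-time jumps, hence purely discontinuous. Evaluating $M^{n,c}-M^{n,d}$ at the grid endpoints of $I_{n}$ recovers $g^n$, so a.s.\ $\max\{(M^{n,c})^*_\infty,(M^{n,d})^*_\infty\}\geq\tfrac12(g^n)^*_\infty>n/2$. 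Partition $\mathbb N$ into the (deterministic) set of $n$ with $\mathbb P((M^{n,c})^*_\infty>n/2)\geq\tfrac12$ and its complement; one of the two is infinite, and since the blocks are carried by independent factors, Borel-Cantelli forces infinitely many of the corresponding events to occur a.s., whence $(M^c)^*_1=\infty$ or $(M^d)^*_1=\infty$ a.s. This contradicts the fact that a continuous, resp.\ c\`adl\`ag, local martingale on the compact interval $[0,1]$ has a.s.\ finite running maximum. Hence $M$ has no Meyer-Yoeurp decomposition, and a fortiori no canonical decomposition. I expect the main obstacle to be the rigorous bookkeeping in the core construction: setting up the product filtration so that the usual conditions hold, checking that each continuous piece is genuinely a path-continuous martingale and each jump piece genuinely purely discontinuous with a predictable jump time, and verifying that the global Meyer-Yoeurp decomposition localizes to the blocks; the probabilistic heart (the Borel-Cantelli blow-up against c\`adl\`ag-ness) is short once the construction is in place.
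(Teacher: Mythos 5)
Your proposal follows essentially the same route as the paper: encode a bad Burkholder transform into a continuous-time martingale on $[0,1)$ by realizing each martingale increment either continuously (as a conditional-expectation martingale driven by an auxiliary Brownian motion) or as a jump at a deterministic time, according to the transforming sign, and then contradict the existence of a Meyer-Yoeurp decomposition by blowing up the running supremum of the would-be continuous (or purely discontinuous) part. The variations are real but minor: (a) you use the $\{-1,1\}$-valued characterization from the introduction and extract the contradiction from $\max\{(M^{n,c})^*_\infty,(M^{n,d})^*_\infty\}\geq\tfrac12(g^n)^*_\infty$, whereas the paper first proves a $\{0,1\}$-valued variant (Lemma \ref{lem:UMDiffg^*>1holds}) so that the transform \emph{is} the continuous part; (b) you keep the blocks on independent factors and apply Borel--Cantelli at the end, while the paper pre-assembles a single transform with $g^*_\infty=\infty$ a.s.\ in Corollary \ref{cor:UMDiffg^*>inftyholds}; (c) you realize the continuous increments exactly as $t\mapsto\mathbb E(d^n_k\mid\mathcal F_t)$, while the paper uses an approximate realization \eqref{eq:upperboundsforsigmanotMiniffUMD} and controls the error in \eqref{eq:diffoff_ntildeandM_nforiffUMD}; your version is arguably cleaner, provided you actually verify path-continuity of the conditional expectation (explicit for Paley--Walsh increments, where it equals $\phi_k(\cdot)\,h(t,B_t)$ for a smooth $h$, including continuity at the right endpoint since $B$ does not vanish there a.s.).

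One step is not valid as written: ``$M$ is $L^1$-bounded on $[0,1)$ and extends to a c\`adl\`ag $L^1$-martingale on $[0,1]$.'' $L^1$-boundedness does not imply convergence of an $X$-valued martingale unless $X$ has the Radon--Nikod\'ym property, which a non-UMD space need not have (e.g.\ $c_0$, $L^1$). The repair is immediate within your construction: $M$ is closable, namely $M_t=\mathbb E(\xi\mid\mathcal F_t)$ with $\xi=\sum_n f^n_\infty$ convergent absolutely in $L^1(\Omega;X)$ because $\sum_n\mathbb E\|f^n_\infty\|<\infty$, and closable martingales converge a.s.\ and in $L^1$ in any Banach space. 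This is exactly how the paper argues, by exhibiting the terminal variable $\xi=\tilde f_\infty-\eta$ explicitly; you should replace the $L^1$-boundedness claim with this closability argument.
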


For the proof we will need the following lemma which is a modification of the statements from p.\ 1001 and p.\ 1004 of~\cite{Burk81}. Recall that if $(f_n)_{n\geq 0}$ is an $X$-valued martingale, the we define $df_n := f_{n}-f_{n-1}$ for $n\geq 1$ and $df_0:=f_0$.

\begin{lemma}\label{lem:UMDiffg^*>1holds}
 Let $X$ be a Banach space. Then $X$ is a UMD Banach space if and only if there exists a constant $C>0$ such that for any $X$-valued discrete martingale $(f_n)_{n\geq 0}$, for any $\{0,1\}$-valued sequence $(a_n)_{n\geq 0}$ one has that
 \[
  g^*_{\infty} >1\;\; \text{a.s.}\;\;\Longrightarrow \;\;\mathbb E\|f_{\infty}\|>C,
 \]
where $(g_n)_{n\geq 0}$ is an $X$-valued discrete martingale such that $dg_n = a_ndf_n$ for each $n\geq 0$, $g^*_{\infty} := \sup_{n\geq 0} \|g_n\|$.
\end{lemma}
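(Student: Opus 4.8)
The plan is to derive the statement from Burkholder's characterisation of the UMD property by $\{-1,1\}$-transforms recalled in the introduction (from \cite{Burk81}), using the elementary relation between the two notions of transform: if $(g_n)_{n\ge0}$ is the $\{0,1\}$-transform of $(f_n)_{n\ge0}$ with multipliers $(a_n)_{n\ge0}$ and $(G_n)_{n\ge0}$ is the $\{-1,1\}$-transform with multipliers $(2a_n-1)_{n\ge0}$, then $2g_n=f_n+G_n$ for all $n$, and every $\{-1,1\}$-transform of $(f_n)$ arises in this way. I will also use the classical fact that in a UMD space martingale transforms are weak $(1,1)$-bounded, i.e.\ $\lambda\mathbb{P}(G^*_\infty>\lambda)\le c_X\mathbb{E}\|f_\infty\|$ for all $\lambda>0$ and every $\{-1,1\}$-transform $G$ of $f$ (this is classical; see \cite[Section~3.5]{HNVW1}, or apply Theorem~\ref{thm:analougeofProp3.5.4} to the transform viewed as an operator on continuous-time martingales).

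Suppose first $X$ is UMD, and let $(f_n),(a_n),(g_n)$ be as in the statement, with $(G_n)$ the corresponding $\{-1,1\}$-transform. Then $g^*_\infty\le\tfrac12(f^*_\infty+G^*_\infty)$, so by \eqref{eq:1,inftyineqforsubmart} (with $p=1$) and the weak $(1,1)$-bound,
\[
 \mathbb{P}(g^*_\infty>1)\le\mathbb{P}(f^*_\infty>1)+\mathbb{P}(G^*_\infty>1)\le(1+c_X)\,\mathbb{E}\|f_\infty\|.
\]
If $g^*_\infty>1$ a.s.\ the left-hand side equals $1$, hence $\mathbb{E}\|f_\infty\|\ge(1+c_X)^{-1}$, and the forward implication holds with $C:=\tfrac12(1+c_X)^{-1}$.

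For the converse I argue by contraposition. Let $X$ fail the UMD property; I will show that for every small $C>0$ there is a $\{0,1\}$-transform $(g_n)$ of a discrete martingale $(f_n)$ with $g^*_\infty>1$ a.s.\ and $\mathbb{E}\|f_\infty\|\le C$. By Burkholder's characterisation, for each $j\ge1$ there are a martingale $f^{(j)}$ (on its own filtered probability space, with $f^{(j)}_0=0$) and a deterministic $\{-1,1\}$-sequence $\varepsilon^{(j)}$ whose transform $G^{(j)}$ satisfies $(G^{(j)})^*_\infty>1$ a.s.\ and $\mathbb{E}\|f^{(j)}_\infty\|\le 2^{-j}C$. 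Put $g^{(j)}:=\tfrac12(f^{(j)}+G^{(j)})$, the $\{0,1\}$-transform of $f^{(j)}$ with multipliers $\tfrac12(1+\varepsilon^{(j)}_n)$. Since $(g^{(j)})^*_\infty\ge\tfrac12\bigl((G^{(j)})^*_\infty-(f^{(j)})^*_\infty\bigr)$ and, by \eqref{eq:1,inftyineqforsubmart}, $\mathbb{P}\bigl((f^{(j)})^*_\infty>\tfrac12\bigr)\le2\,\mathbb{E}\|f^{(j)}_\infty\|$, the $\{0,1\}$-transform $12\,g^{(j)}$ of $12\,f^{(j)}$ satisfies $(12\,g^{(j)})^*_\infty>3$ outside an event of probability $\le 2^{-j+1}C$, while $\mathbb{E}\|12\,f^{(j)}_\infty\|\le 12\cdot2^{-j}C$; truncating at a sufficiently large deterministic time $N_j$ preserves this with exceptional probability $\le\eta_j:=2^{-j+2}C$ (which we may assume $<1$) and without increasing the $L^1$-norm.

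Finally, concatenate these independent blocks: on the product probability space run, one after another, the martingales $12\,(f^{(j)})^{N_j}$, carrying along the transforms $12\,(g^{(j)})^{N_j}$, with the deterministic multiplier sequence formed by juxtaposing the (truncated) sequences $\tfrac12(1+\varepsilon^{(j)}_\cdot)$. This produces a martingale $f$ with $\mathbb{E}\|f_\infty\|\le\sum_j 12\cdot2^{-j}C=12C$ (hence $\le C$ after replacing $C$ by $C/12$) and its $\{0,1\}$-transform $g$. Let $D_j$ be the event that $\|g_t\|\le1$ throughout the first $j$ blocks. On $D_{j-1}$ the value of $g$ at the start of block $j$ has norm $\le1$, so whenever the transform $12\,(g^{(j)})^{N_j}$ of block $j$ exceeds $3$ in norm — an event of probability $\ge1-\eta_j$ depending only on the $j$-th coordinate — the path of $\|g\|$ exceeds $2$ somewhere in block $j$, so $D_j$ fails. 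By independence $\mathbb{P}(D_j)\le\eta_j\,\mathbb{P}(D_{j-1})\le\prod_{i\le j}\eta_i\to0$, hence $g^*_\infty>1$ a.s., contradicting the assumed estimate for this $C$. The heart of the argument is precisely this last step: the identity $2g=f+G$ together with \eqref{eq:1,inftyineqforsubmart} only gives a $\{0,1\}$-transform with $g^*_\infty>1$ \emph{with probability close to one}, and upgrading this to an \emph{almost sure} statement while keeping $\mathbb{E}\|f_\infty\|$ arbitrarily small requires the concatenation of rescaled, truncated, independent copies above — essentially the device on p.~1004 of \cite{Burk81}.
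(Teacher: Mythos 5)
Your proof is correct, and it reaches the lemma by a genuinely different route from the paper. The paper's proof is a one-line deferral: it asserts that Burkholder's argument for the equivalence of (2.3) and (2.4) in \cite{Burk81} goes through verbatim with $\{0,1\}$-valued multipliers in place of $\{-1,1\}$-valued ones. You instead keep the $\{-1,1\}$ characterisation as a black box and reduce the $\{0,1\}$ case to it via the identity $2g=f+G$. For the forward implication this costs you the classical weak $(1,1)$ bound for $\{-1,1\}$-transforms in UMD spaces; in fact you could shortcut even that step by noting that the $\{0,1\}$-transform $g$ is itself weakly differentially subordinate to $f$ and satisfies \eqref{eq:starconditionforthm:analougeofProp3.5.4}, so Theorem \ref{thm:analougeofProp3.5.4} (or its discrete antecedent in \cite{HNVW1}) gives $\mathbb P(g^*_\infty>1)\le c_X\,\mathbb E\|f_\infty\|$ directly, without passing through $G$. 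For the converse, the essential difficulty is exactly the one you identify: $2g=f+G$ and \eqref{eq:1,inftyineqforsubmart} only produce $g^*_\infty>1$ with probability close to one, and upgrading this to an almost sure statement while keeping $\mathbb E\|f_\infty\|$ arbitrarily small requires the concatenation of rescaled, truncated, independent blocks with the inductive bound $\mathbb P(D_j)\le\eta_j\,\mathbb P(D_{j-1})$. That device is precisely the construction on p.~1004 of \cite{Burk81} on which the paper's proof implicitly relies (compare also Corollary \ref{cor:UMDiffg^*>inftyholds}, where the paper carries out the same concatenation for a different purpose), so the underlying ideas coincide; what your version buys is a proof that can be checked without reopening Burkholder's argument, at the price of invoking the weak-type bound for transforms. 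The bookkeeping of constants, the truncation step, and the independence argument are all sound.
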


\begin{proof}
 One needs to modify \cite[Theorem 2.1]{Burk81} in such a way that $dg_n = a_ndf_n$ for some $a_n\in \{0,1\}$ for each $n\geq 0$. Then the proof is the same, and the desired statement follows from the equivalence of \cite[(2.3)]{Burk81} and \cite[(2.4)]{Burk81}.
\end{proof}

For the next corollary we will need to define a Rademacher random variable and a Paley-Walsh martingale. 

\begin{definition}[Rademacher random variable]
 Let $\xi:\Omega \to \mathbb R$ be a random variable. Then $\xi$ has the {\em Rademacher distribution} (or simply $\xi$ {\em is Rademacher}) if $\mathbb P (\xi = 1) = \mathbb P(\xi = -1) = \frac{1}{2}$.
\end{definition}

\begin{definition}[Paley-Walsh martingale]\label{def:Paley-Walsh}
 Let $X$ be a Banach space. A discrete $X$-valued martingale $(f_n)_{n\geq 0}$ is called a {\em Paley-Walsh martingale} if there exist a sequence of independent Rademacher random variables $(r_n)_{n\geq 1}$, a function $\phi_n:\{-1, 1\}^{n-1}\to X$ for each $n\geq 2$ and $\phi_1 \in X$ such that $df_n = r_n \phi_n(r_1,\ldots, r_{n-1})$ for each $n\geq 2$, $df_1 = r_1 \phi_1$, and $f_0$ is a constant a.s.
\end{definition}

\begin{corollary}\label{cor:UMDiffg^*>inftyholds}
 Let $X$ be a Banach space that does not have the UMD property. Then there exists an $X$-valued Paley-Walsh $L^1$-martingale $(f_n)_{n\geq 0}$ and a $\{0,1\}$-valued sequence $(a_n)_{n\geq 0}$ such that $\mathbb P(g_{\infty}^* = \infty) = 1$, where $(g_n)_{n\geq 0}$ is an $X$-valued martingale such that $dg_n = a_ndf_n$ for each $n\geq 0$.
\end{corollary}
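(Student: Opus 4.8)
The plan is to construct $(f_n)_{n\ge 0}$ by concatenating, on a product probability space, independent rescaled copies of the ``bad'' martingales supplied by Lemma~\ref{lem:UMDiffg^*>1holds}: the $m$-th block will be such a martingale blown up by a factor that grows to infinity, while its $L^1$-size is taken small enough that the rescaled blocks stay summable in $L^1$. The point is that the growing factor will drag the transform past height $m$ by the end of the $m$-th block regardless of where it currently sits, and — because $X$ is not UMD — the $L^1$-cost of each block can be made as small as we please, so the total remains finite.

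First I would record the building blocks. Since $X$ does not have UMD, Lemma~\ref{lem:UMDiffg^*>1holds}, together with the standard fact that one may use Paley-Walsh martingales in such characterizations (see~\cite{HNVW1}; this is also visible in the construction underlying Lemma~\ref{lem:UMDiffg^*>1holds} and in~\cite{Burk81}), yields for every $\eta>0$ a Paley-Walsh $L^1$-martingale with $f_0=0$ and a deterministic $\{0,1\}$-valued sequence whose transform $g$ satisfies $g^*_\infty>1$ a.s.\ and $\E\|f_\infty\|\le\eta$. Truncating at a large index $N$ preserves the Paley-Walsh property and the transform structure, keeps $\E\|f_N\|\le\E\|f_\infty\|\le\eta$ (since $\|f_n\|$ is a submartingale), and by $\{g^*_N>1\}\uparrow\{g^*_\infty>1\}$ makes $\P(g^*_N>1)$ as close to $1$ as we wish. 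Applying this for each $m\ge 1$ with $\eta=4^{-m}/m$, I obtain a \emph{finite} Paley-Walsh martingale $(f^{(m)}_j)_{j=0}^{N_m}$ with $f^{(m)}_0=0$, a deterministic $\{0,1\}$-sequence $(a^{(m)}_j)_{j=0}^{N_m}$ with transform $g^{(m)}$, such that $\E\|f^{(m)}_{N_m}\|\le 4^{-m}/m$ and $\P\bigl((g^{(m)})^*_{N_m}>1\bigr)\ge 1-2^{-m}$.

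Next I would concatenate. On the product of the underlying Rademacher spaces, put $M_m:=N_1+\ldots+N_m$, $M_0:=0$, $f_0:=0$, and define $(f_n)_{n\ge 0}$ block by block by $f_{M_{m-1}+j}:=f_{M_{m-1}}+2m\,f^{(m)}_j$ for $0\le j\le N_m$. As each $f^{(m)}$ is a martingale for its Rademacher filtration and $f_0$ is constant, $(f_n)$ is a Paley-Walsh martingale for the product Rademacher filtration. It is an $L^1$-martingale: from $f^{(m)}_j=\E(f^{(m)}_{N_m}\mid\mathcal F_j)$ one gets $\E\|f^{(m)}_j\|\le 4^{-m}/m$ for all $j$, hence $\E\|f_{M_{m-1}+j}-f_{M_{m-1}}\|\le 2\cdot 4^{-m}$, so $f_\infty:=\sum_{m\ge 1}2m\,f^{(m)}_{N_m}$ converges in $L^1$ and $\E\|f_\infty-f_n\|\to 0$. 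Let $(a_n)_{n\ge 0}$ be the concatenation of the sequences $(a^{(m)}_j)_j$ (with $a_0:=0$), a deterministic $\{0,1\}$-sequence, and let $g$ be its transform; then over block $m$ one has $g_{M_{m-1}+j}=g_{M_{m-1}}+2m\,g^{(m)}_j$.

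Finally, the explosion of $g^*$: set $E_m:=\{g^*_{M_m}>m\}$. If $\|g_{M_{m-1}}\|>m$ then $E_m$ holds; and if $\|g_{M_{m-1}}\|\le m$ while $(g^{(m)})^*_{N_m}>1$, then choosing an index $j$ with $\|g^{(m)}_j\|>1$ gives $\|g_{M_{m-1}+j}\|\ge 2m\|g^{(m)}_j\|-\|g_{M_{m-1}}\|>2m-m=m$, so again $E_m$ holds. Hence $E_m^c$ is contained in the block-$m$ failure event, which is independent of $\mathcal F_{M_{m-1}}$ and has probability at most $2^{-m}$; by the Borel-Cantelli lemma, almost surely $g^*_{M_m}>m$ for all large $m$, i.e.\ $\P(g^*_\infty=\infty)=1$. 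The only step requiring genuine care is extracting the \emph{finite, Paley-Walsh} witnesses with arbitrarily small $L^1$-norm and a deterministic $\{0,1\}$-multiplier, which rests on the standard Paley-Walsh reduction for UMD-type inequalities; everything afterwards is bookkeeping, the load-bearing idea being to inflate block $m$ by the growing factor $2m$ (forcing $g$ past height $m$ irrespective of its position) while keeping $\sum_m 2m\cdot 4^{-m}/m<\infty$.
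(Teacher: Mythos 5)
Your proposal is correct and follows essentially the same route as the paper: both negate Lemma~\ref{lem:UMDiffg^*>1holds} to extract, for each block, an independent finite Paley-Walsh martingale of arbitrarily small $L^1$-norm whose (rescaled) transform exceeds a prescribed level with probability at least $1-2^{-m}$, and then concatenate the blocks into a single $L^1$-martingale. The only cosmetic difference is the bookkeeping for the running maximum: the paper chooses thresholds $C_k$ adaptively so that the $k$-th transform exceeds $2C_k$ while the accumulated previous maxima stay below $C_k$ with high probability, whereas you fix the inflation factor $2m$ in advance and handle the accumulated position by a two-case triangle-inequality argument plus Borel--Cantelli; both yield $\mathbb P(g^*_\infty=\infty)=1$.
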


\begin{proof}
 Without loss of generality all the martingales used below are Paley-Walsh (see \cite[Theorem 3.6.1]{HNVW1}), so the resulting martingale will be Paley-Walsh as well. By Lemma \ref{lem:UMDiffg^*>1holds} we can find $N_1>0$, an $X$-valued martingale $f^1=(f^1_n)_{n=0}^{N_1}$ and a $\{0,1\}$-valued sequence $(a_n^1)_{n=0}^{N_1}$ such that $\mathbb E\|f^1_{N_1}\|<\frac{1}{2}$ and
 \[
  \mathbb P((g^1)^*_{N_1}>1)>\frac 12,
 \]
where $g^1 = (g^1_n)_{n=0}^{N_1}$ is such that $dg^1_n = a_n^1df_n^1$ for each $n=0,\ldots,N_1$. Now inductively for each $k>1$ we find $N_k>0$ and an $X$-valued Paley-Walsh martingale $f^k=(f^k_n)_{n=0}^{N_k}$ independent of $f^1,\ldots,f^{k-1}$ such that $\mathbb E\|f^k_{N_k}\|<\frac{1}{2^k}$ and
 \[
  \mathbb P((g^k)^*_{N_k}>2C_k)>1-\frac 1{2^k},
 \]
where $g^k = (g^k_n)_{n=0}^{N_1}$ is such that $dg^k_n = a_n^kdf_n^k$ for each $n=0,\ldots,N_k$, and $C_k>2^k$ is such that 
 \[
  \mathbb P((g^1)^*_{N_1} + \ldots + (g^{k-1})^*_{N_{k-1}} >C_k)<\frac 1{2^k}.
 \]
Without loss of generality assume that $f^k_0=0$ a.s.\ for each $k\geq 1$. Now construct a martingale $(f_n)_{n\geq 0}$ and a $\{0,1\}$-valued sequence $(a_n)_{n\geq 0}$ in the following way: $f_0=a_0=0$ a.s., $df_n = df^k_m$ and $a_n = a^k_m$ if $n=N_1 + \cdots + N_{k-1} + m$ for some $k\geq 1$ and $1\leq m\leq N_k$. Then $(f_n)_{n\geq 0}$ is well-defined, 
$$
\lim_{n\to \infty} \mathbb E \|f_n\| = \mathbb E \|f_{\infty}\| \leq \sum_{k\geq 1}\mathbb E \|f_{N_k}^k\| \leq 1
$$ 
by the triangle inequality, and for an $X$-valued martingale $(g_n)_{n\geq 0}$ with $dg_n = a_ndf_n$ for each $n\geq 0$, for each $k\geq 2$
\[
\mathbb P(g^*_{N_1+ \cdots + N_k}>C_k) \geq \mathbb P((g^k)^*_{N_k}>2C_k, (g^1)^*_{N_1} + \ldots + (g^{k-1})^*_{N_{k-1}} \leq C_k) >1-\frac 1{2^{k-1}},
\]
hence $g^*_{\infty} = \infty$ a.s.\
\end{proof}

\begin{proof}[Proof of Theorem \ref{thm:noUMDnodec}]
 By Corollary \ref{cor:UMDiffg^*>inftyholds} we can construct a discrete filtration $\mathbb G =(\mathcal G_n)_{n\geq 0}$ and an $X$-valued $L^1$-integrable Paley-Walsh $\mathbb G$-martingale $(f_n)_{n\geq 0}$ such that
 \begin{equation}\label{eq:upperboundnormoffinthprfofiffUMD}
  \mathbb E\|f_{\infty}\| = \lim_{n\to \infty} \mathbb E \|f_n\| \leq 1,
 \end{equation}
and such that there exists $\{0,1\}$-valued sequence $(a_n)_{n\geq 0}$ so that
  \[
   \mathbb P(g^*_{\infty} = \infty) = 1,
  \]
where $(g_n)_{n\geq 0}$ is an $X$-valued martingale with $dg_n = a_ndf_n$ for each $n\geq 0$.

Since $(f_n)_{n\geq 0}$ is Paley-Walsh, there exist a sequence $(r_n)_{n\geq 0}$ of independent Rademacher variables, a sequence of functions $(\phi_n)_{n\geq 1}$ with $\phi_1\in X$ and $\phi_n:\{-1,1\}^{n-1} \to X$ for each $n\geq 2$, so that $df_n = r_n \phi_n(r_1, \ldots, r_{n-1})$ a.s.\ for each $n\geq 1$.

Now our goal is to construct a continuous-time $X$-valued martingale $M$ which does not have the Meyer-Yoeurp decomposition (and hence the canonical decomposition) using  $(f_n)_{n\geq 0}$. Let us first construct a filtration $\mathbb F = (\mathcal F_t)_{t\geq 0}$ on $\mathbb R_+$ in the following way. By \cite[Subsection 3.2]{Y17MartDec} for each $n\geq 0$ we can find a continuous martingale $M^n:[0, \frac{1}{2^{n+1}}]\times \Omega \to \mathbb R$ with a symmetric distribution such that $M^n_0=0$ a.s., $\bigl|M^n_{\frac{1}{2^{n+1}}}\bigr|\leq 1$ a.s., 
\begin{equation}\label{eq;proofofprop:noUMDnodecM^nisnot0a.s.}
 \mathbb P\bigl(M^n_{\frac{1}{2^{n+1}}} = 0\bigr) = 0,
\end{equation}
and
\begin{equation}\label{eq:upperboundsforsigmanotMiniffUMD}
 \mathbb P\Bigl(M^n_{\frac{1}{2^{n+1}}} \neq \sign M^n_{\frac{1}{2^{n+1}}}\Bigr) <\frac {1}{2^n (\|\phi_{n}\|_{\infty}+1)}.
\end{equation}
Let $(\tilde r_n)_{n\geq 0}$ be a sequence of independent Rademacher random variables. Without loss of generality assume that all $(\tilde r_n)_{n\geq 0}$ and $(M^n)_{n\geq 0}$ are independent. Then set $\mathcal F_0$ to be the $\sigma$-algebra generated by all negligible sets, and set
\[
 \mathcal F_t := 
 \begin{cases}
\mathcal F_{1-\frac{1}{2^n}},\;\;\; &t\in (1-\frac{1}{2^n}, 1-\frac{1}{2^{n+1}}), a_n = 0, n\geq 0,\\
\sigma(\mathcal F_{1-\frac{1}{2^n}}, \tilde r_n),\;\;\; &t= 1-\frac{1}{2^{n+1}}, a_n = 0, n\geq 0,\\
\sigma(\mathcal F_{1-\frac{1}{2^n}}, (M^n_{s})_{s\in[0, t-1-\frac{1}{2^n}]}), \;\;\; &t\in (1-\frac{1}{2^n}, 1-\frac{1}{2^{n+1}}], a_n = 1, n\geq 0,\\
 \sigma(\mathcal F_s: s\in[0,1)),\;\;\;&t\geq 1.
\end{cases}
\]
Let $(\sigma_n)_{n\geq 0}$ be a sequence of independent Rademacher variables such that $\sigma_n = \tilde r_n$ if $a_n=0$ and $\sigma_n = \sign M^n_{\frac{1}{2^{n+1}}}$ if $a_n=1$ (in the latter case $\sigma_n$ has the Rademacher distribution by \eqref{eq;proofofprop:noUMDnodecM^nisnot0a.s.} and the fact that $M^n_{\frac{1}{2^{n+1}}}$ is symmetric). Now construct $M:\mathbb R_+ \times \Omega \to X$ in the following way:
\begin{equation}\label{eq:defofMintheproofofiffUMD}
 M_t =
\begin{cases}
0, \;\;\; &t=0,\\
 M_{1-\frac{1}{2^n}},\;\;\; &t\in (1-\frac{1}{2^n}, 1-\frac{1}{2^{n+1}}), a_n = 0, n\geq 0,\\
 M_{1-\frac{1}{2^n}} + \sigma_n\phi_n(\sigma_1,\ldots,\sigma_{n-1}),\;\;\; &t= 1-\frac{1}{2^{n+1}}, a_n = 0, n\geq 0,\\
M_{1-\frac{1}{2^n}} + M^n_{t-1-\frac{1}{2^n}}\phi_n(\sigma_1,\ldots,\sigma_{n-1}), \;\;\; &t\in (1-\frac{1}{2^n}, 1-\frac{1}{2^{n+1}}], a_n = 1, n\geq 0,\\
 \lim_{n\to \infty} M_{1-\frac{1}{2^n}},\;\;\;&t\geq 1.
\end{cases} 
\end{equation}
 First we show that $\lim_{n\to \infty} M_{1-\frac{1}{2^n}}$ exists a.s., hence $M$ is well-defined. By \cite[Theorem 3.3.8]{HNVW1} it is sufficient to show that there exists $\xi\in L^1(\Omega; X)$ such that $M_{1-\frac{1}{2^n}} = \mathbb E (\xi|\mathcal F_{1-\frac{1}{2^n}})$ for all $n\geq 1$. Notice that $\bigl(M_{1-\frac{1}{2^n}}\bigr)_{n\geq 0}$ is a martingale since $M_{1-\frac{1}{2^{n+1}}} - M_{1-\frac{1}{2^n}}$ equals either $\sigma_n\phi_n(\sigma_1,\ldots,\sigma_{n-1})$ (if $a_n=0$) or $M^n_{\frac{1}{2^{n+1}}}\phi_n(\sigma_1,\ldots,\sigma_{n-1})$ (if $a_n=1$). Both random variables are bounded, and in both cases the conditional expectation with respect to $\mathcal F_{1-\frac{1}{2^n}}$ gives zero. Now let us show integrability. Let $(\tilde f_n)_{n\geq 0}$ be an $X$-valued martingale such that $\tilde f_0=0$ a.s.\ and
 \begin{equation}\label{eq:defofftildeintheproofofUMDiff}
   d\tilde f_n = \sigma_n\phi_n(\sigma_1,\ldots,\sigma_{n-1}),\;\;\; n\geq 1.
 \end{equation}
 Then $(\tilde f_n)_{n\geq 0}$ has the same distribution as $(f_n)_{n\geq 0}$, so it is $L^1$-integrable. Now fix $n\geq 1$ and let us estimate $\mathbb E \|\tilde f_n - M_{1-\frac{1}{2^n}}\|$:
 \begin{align}\label{eq:diffoff_ntildeandM_nforiffUMD}
  \mathbb E \bigl\|\tilde f_n - M_{1-\frac{1}{2^n}}\bigr\| &\stackrel{(i)}= \mathbb E \Bigl\|\sum_{k=1}^n\sigma_k\phi_k(\sigma_1,\ldots,\sigma_{k-1})\nonumber\\
  &\quad - \sum_{k=1}^n\bigl(\sigma_k\phi_k(\sigma_1,\ldots,\sigma_{k-1})\mathbf 1_{a_k=0} + M^k_{\frac{1}{2^{k+1}}}\phi_k(\sigma_1,\ldots,\sigma_{k-1})\mathbf 1_{a_k=1} \bigr)\Bigr\|\nonumber\\
  &=\mathbb E \Bigl\|\sum_{k=1}^n\bigl(\sigma_k-M^k_{\frac{1}{2^{k+1}}}\bigr)\phi_k(\sigma_1,\ldots,\sigma_{k-1}) \mathbf 1_{a_k=1} \Bigr\|\\
  &\stackrel{(ii)}\leq\sum_{k=1}^n\mathbb E \bigl\|\bigl(\sigma_k-M^k_{\frac{1}{2^{k+1}}}\bigr)\phi_k(\sigma_1,\ldots,\sigma_{k-1})  \bigr\|\nonumber\\
    &\stackrel{(iii)}\leq 2\sum_{k=1}^n\mathbb P \bigl(\sigma_k\neq M^k_{\frac{1}{2^{k+1}}}\bigr)\|\phi_k\|_{\infty} \stackrel{(iv)}\leq 2\sum_{k=1}^n \frac {1}{2^k}\leq 2,\nonumber
 \end{align}
where $(i)$ follows from \eqref{eq:defofftildeintheproofofUMDiff} and the definition of $M$ from \eqref{eq:defofMintheproofofiffUMD}, $(ii)$ holds by the triangle inequality, $(iii)$ follows from the fact that a.s.\ for each $n\geq 1$
$$
\bigl|\sigma_n - M^n_{\frac{1}{2^{n+1}}}\bigr| \leq |\sigma_n| + \bigl|M^n_{\frac{1}{2^{n+1}}}\bigr|\leq 2;
$$ 
finally, $(iv)$ follows from \eqref{eq:upperboundsforsigmanotMiniffUMD}. 
Let us show that there exists $\mathcal F_1$-measurable $\xi\in L^1(\Omega; X)$ such that $M_{1-\frac{1}{2^n}} = \mathbb E (\xi|\mathcal F_{1-\frac{1}{2^n}})$ for each $n\geq 1$. First notice that $\mathbb E (\tilde f_{\infty}|\mathcal F_{1-\frac{1}{2^n}}) = \tilde f_n$ for each $n\geq 1$. Moreover, by \eqref{eq:diffoff_ntildeandM_nforiffUMD} the series
\[
 \eta := \sum_{k=1}^{\infty}\bigl(\sigma_k-M^k_{\frac{1}{2^{k+1}}}\bigr)\phi_k(\sigma_1,\ldots,\sigma_{k-1}) \mathbf 1_{a_k=1}
\]
converges in $L^1(\Omega; X)$. Therefore, if we define $\xi := \tilde f_{\infty} - \eta$, then
\begin{align*}
 \mathbb E (\xi|\mathcal F_{1-\frac{1}{2^n}})&=\mathbb E (\tilde f_{\infty}-\eta|\mathcal F_{1-\frac{1}{2^n}})\\
 &= \tilde f_n - \mathbb E \Bigl(\sum_{k=1}^{\infty}\bigl(\sigma_k-M^k_{\frac{1}{2^{k+1}}}\bigr)\phi_k(\sigma_1,\ldots,\sigma_{k-1}) \mathbf 1_{a_k=1} | \mathcal F_{1-\frac{1}{2^n}}\Bigr)\\
 &= \tilde f_n - \sum_{k=1}^{\infty}\mathbb E \Bigl(\bigl(\sigma_k-M^k_{\frac{1}{2^{k+1}}}\bigr)\phi_k(\sigma_1,\ldots,\sigma_{k-1}) \mathbf 1_{a_k=1} | \mathcal F_{1-\frac{1}{2^n}}\Bigr)\\
 &= \tilde f_n - \sum_{k=1}^{n}\bigl(\sigma_k-M^k_{\frac{1}{2^{k+1}}}\bigr)\phi_k(\sigma_1,\ldots,\sigma_{k-1}) \mathbf 1_{a_k=1} = M_{1-\frac{1}{2^n}},
\end{align*}
so one has an a.s.\ convergence by the martingale convergence theorem \cite[Theorem~3.3.8]{HNVW1}.

Now let us show that $M$ is a martingale that does not have the Meyer-Yoeurp decomposition. Assume the contrary: let $M = M^d + M^c$ be the Meyer-Yoeurp decomposition. Then one can show that for each $n\geq 1$
\begin{align*}
M^d_{1-\frac{1}{2^n}} &= \sum_{k=1}^n \sigma_k \phi_k(\sigma_1, \ldots,\sigma_{k-1})\mathbf 1_{a_k=0},\\
M^c_{1-\frac{1}{2^n}} &= \sum_{k=1}^n M^k_{\frac{1}{2^{k+1}}} \phi_k(\sigma_1, \ldots,\sigma_{k-1})\mathbf 1_{a_k=1},
\end{align*}
by applying $x^*\in X^*$ and showing that the corresponding processes $\bigl\langle M^d_{1-\frac {1}{2^n}}, x^* \bigr\rangle$ and $\bigl\langle M^c_{1-\frac {1}{2^n}}, x^* \bigr\rangle$ are purely discontinuous and continuous local martingales respectively (see Remark \ref{rem:MY==weakMY}). Now let us show that $M^c$ is not an $X$-valued local martingale. If it is a local martingale, then 
$$
\mathbb P((M^c)^*_{\infty} = \infty) = \mathbb P((M^c)^*_{1} = \infty)=0.
$$
since $M^c$ as a local martingale should have c\`adl\`ag paths (even continuous since $M^c$ assume to be continuous). But for each fixed $n\geq 1$
\[
\mathbb P((M^c)^*_{1} = \infty) = \mathbb P\bigl(\bigl(M^c - M^c_{\frac{1}{2^{n}}}\bigr)^*_{1} = \infty\bigr) \geq \mathbb P\bigl((\tilde g-\tilde g_n)^*_{\infty} = \bigl(M^c - M^c_{\frac{1}{2^{n}}}\bigr)^*_{1} \bigr)
\]
where $(\tilde g_n)_{n\geq 0}$ is an $X$-valued martingale such that $d\tilde g_n = a_nd\tilde f_n$ a.s.\ for each $n\geq 0$, and hence by the construction in Lemma \ref{lem:UMDiffg^*>1holds} $\tilde g^*_{\infty} = \infty$ a.s. Further,
\begin{align*}
\mathbb P\bigl((\tilde g-\tilde g_n)^*_{\infty} = \bigl(M^c - M^c_{\frac{1}{2^{n}}}\bigr)^*_{1} \bigr) &= 1 - \mathbb P\bigl((\tilde g-\tilde g_n)^*_{\infty} \neq \bigl(M^c - M^c_{\frac{1}{2^{n}}}\bigr)^*_{1} \bigr)\\
&\geq 1 - \sum_{k=n}^{\infty}\mathbf 1_{a_k=1}\mathbb P\Bigl(M^k_{\frac{1}{2^{k+1}}} \neq \sigma_k\Bigr)\\
&\geq 1 - \sum_{k=n}^{\infty}\mathbb P\Bigl(M^k_{\frac{1}{2^{k+1}}} \neq \sign M^k_{\frac{1}{2^{k+1}}}\Bigr)\\
&\stackrel{(*)}\geq 1-\frac{1}{2^{n-1}},
\end{align*}
where $(*)$ follows from \eqref{eq:upperboundsforsigmanotMiniffUMD}. Since $n$ was arbitrary, $(M^c)^*_{1} = (M^c)^*_{\infty}=\infty$ a.s., so $M^c$ can not be a local martingale.
\end{proof}

\begin{proof}[Proof of Theorem \ref{thm:candecoflocmartsufandnesofUMD} (necessity of UMD)]
 Necessity of the UMD property follows from Theorem \ref{thm:noUMDnodec}.
\end{proof}

\begin{remark}
 One can also show that existence of the Yoeurp decomposition of an arbitrary $X$-valued purely discontinuous local martingale is equivalent to the UMD property. We will not repeat the argument here, but just notice that one needs to modify the proof of Theorem \ref{thm:noUMDnodec} in a way which was demonstrated in \cite[Subsection 3.2]{Y17MartDec}.
\end{remark}

\begin{remark}
 The reader might assume that one can weaken the Meyer-Yoeurp decomposition and consider a decomposition of an $X$-valued local martingale $M$ into a sum of a continuous $X$-valued {\em semi}martingale $N^c$ and a purely discontinuous \mbox{$X$-va}\-lued {\em semi}martingale $N^d$, which perhaps may happen in a broader (rather than UMD) class of Banach spaces. Then for any reasonable definition of an $X$-valued semimartingale we get that $N^c = M^c + A$ for some continuous local martingale $M^c$ and an adapted process of (weakly) bounded variation $A$. Hence $M = N^c + N^d = M^c + (N^d + A)$, where $N^d+A = M- M^c$ is a local martingale, which is purely discontinuous, so $M$ should have the Meyer-Yoeurp decomposition as well in this setting, which means that the UMD property is crucial.
\end{remark}

\bibliographystyle{plain}

\def\cprime{$'$} \def\polhk#1{\setbox0=\hbox{#1}{\ooalign{\hidewidth
  \lower1.5ex\hbox{`}\hidewidth\crcr\unhbox0}}}
  \def\polhk#1{\setbox0=\hbox{#1}{\ooalign{\hidewidth
  \lower1.5ex\hbox{`}\hidewidth\crcr\unhbox0}}} \def\cprime{$'$}

\end{document}